\numberwithin{equation}{section}
\numberwithin{figure}{section}
\numberwithin{table}{section}
\newtheorem{theorem}{Theorem}[section]
\newtheorem{lemma}{Lemma}[section]
\newtheorem{definition}{Definition}[section]
\newtheorem{proposition}{Proposition}[section]
\newtheorem{remark}{Remark}[section]
\newtheorem{assumption}{Assumption}[section]
\begin{document}
\title[Symplectic dG full discretization]{A  symplectic discontinuous Galerkin full discretization for stochastic Maxwell equations}\thanks{This work is funded by National Natural Science Foundation of China (No. 11871068, No. 12022118, No. 11971470 and No. 12031020).}
\author{Chuchu Chen}
\address{LSEC, ICMSEC, Academy of Mathematics and Systems Science, Chinese Academy of Sciences, and School of Mathematical Sciences, University of Chinese Academy of Sciences, Beijing 100049, China}
\email{chenchuchu@lsec.cc.ac.cn}
\keywords{Stochastic Maxwell equations,  Symplectic dG full discretization, Mean-square convergence}
	\begin{abstract}
	This paper proposes a fully discrete method called the  symplectic dG full discretization for stochastic Maxwell equations driven by additive noises, based on a stochastic symplectic method in time and a discontinuous Galerkin (dG) method with the upwind fluxes in space. A priori  $H^k$-regularity ($k\in\{1,2\}$) estimates for the solution of stochastic Maxwell equations are presented, which  have not been reported before to the best of our knowledge. These $H^k$-regularities are vital to make  the assumptions of the mean-square convergence analysis  on the initial fields, the noise and the medium coefficients, but not on the solution itself. The convergence order of the symplectic dG full discretization is shown to be $k/2$ in the temporal direction and $k-1/2$ in the spatial direction. Meanwhile we  reveal the small noise asymptotic  behaviors of the exact and numerical solutions via the large deviation principle, and show that the fully discrete method preserves the divergence relations in a weak sense.
	\end{abstract}

\maketitle

\section{Introduction}\label{sec:1}
Stochastic Maxwell equations are often used to better understand the role of thermodynamic fluctuations presented in the electromagnetic fields, and to get a deeper insight regarding the propagation of electromagnetic waves in complex media (see e.g. \cite{RKT1989}). A mathematically rigorous framework on the effects of randomness  has been developed in \cite{RSY2012}. 
The numerical treatment of the three dimensional stochastic Maxwell equations, even in the linear case, is a challenging task, due to the interaction of the large scale  and the randomness of the problem.
In this paper, we first discretize stochastic Maxwell equations in time via the midpoint scheme, which inherits the stochastic symplecticity of the original continuous problem, and subsequently  in space based on a   dG method combining its attractive features on the treatment of complex geometries and  composite media. 

For the time-dependent stochastic Maxwell equations, there exist some works on the construction of full discretizations, for example, multi-symplectic numerical methods (cf. \cite{CHZ2016, HJZ2014}), energy-conserving methods (cf. \cite{HJZC2017}).  On the rigorous error analysis of the numerical approximations, the existing works mainly focus on  the temporal semidiscretizations (see \cite{CHJ2019a, CHJ2019b, CCHS2020}). It is shown in \cite{CHJ2019a} that a semi-implicit Euler scheme  converges with order $1/2$ in mean-square sense, and in \cite{CCHS2020} that the exponential integrators have mean-square convergence order $1/2$, when applied to stochastic Maxwell equation with multiplicative It\^o noise.  Authors in \cite{CHJ2019b} show that the stochastic symplectic Runge-Kutta semidiscretizations are mean-square convergent with order  $1$ in the additive case.
As far as we know, there are few works on the rigorous error analysis of the spatio-temporal full discretizations for the time-dependent stochastic Maxwell equations. The difficulty lies in  the lack of regularity of the solution in $H^k$-norms or even in $C^{k}$-norms, which depends on the spatial domain, the medium coefficients and the noise, etc. For example, on a cuboid, the solution of the  time-harmonic deterministic Maxwell equations only has $H^{\alpha}$-regularity for $\alpha<3$ in general.

In this work, we consider the approximation of the stochastic electric and magnetic fileds ${\bf E}(t,x)$ and ${\bf H}(t,x)$  satisfying the following stochastic Maxwell equations on a cuboid  $D=(a_1^-,a_1^+)\times (a_2^-,a_2^+)\times (a_3^-,a_3^+)\subset {\mathbb R}^{3}$,
\begin{subequations}\label{sto_max}
\begin{align}
&\varepsilon {\rm d}{\bf E}-\nabla\times {\bf H}{\rm d}t=-{\rm d}W_e(t),\qquad(t,{\bf x})\in(0,~T]\times D,\label{sto_max1} \\
&\mu {\rm d}{\bf H}+\nabla\times {\bf E}{\rm d}t=-{\rm d}W_m(t),\qquad(t,{\bf x})\in(0,~T]\times D,\label{sto_max2}\\
&\nabla\cdot(\varepsilon {\bf E})=0,~\nabla\cdot(\mu {\bf H})=0,\qquad(t,{\bf x})\in(0,~T]\times D,\label{sto_max3}\\
&{\bf n}\times {\bf E}={\bf 0},~{\bf n}\cdot (\mu {\bf H})=0,\qquad(t,{\bf x})\in(0,~T]\times\partial D,\label{sto_max4}\\
&{\bf E}(0,{\bf x})={\bf E}_0({\bf x}),~{\bf H}(0,{\bf x})={\bf H}_0({\bf x}),\qquad{\bf x}\in D,\label{sto_max5}
\end{align}
\end{subequations}
where $T>0$, and ${\bf n}({\bf x})$ denotes the outer unit normal at ${\bf x}\in \partial D$.
We suppose that the medium is isotropic, which implies that the permittivity $\varepsilon$ and the permeability $\mu$  are real-valued scalar functions, i.e., $\varepsilon, \mu: D \rightarrow  {\mathbb R}$.
Throughout this paper, we assume the medium coefficients satisfy 
\begin{equation}\label{assump_coe}
\varepsilon,\; \mu \in L^{\infty}(D),~~
\varepsilon, \; \mu \geq \delta \mbox{ for a constant } \delta>0.
\end{equation}
Here $W_e(t)$ (resp. $W_m(t)$) is a $Q_e$-Wiener (resp. $Q_m$-Wiener) process  with respect to a filtered probability space $(\Omega,{\mathcal F},\{{\mathcal F}_{t}\}_{0\leq t\leq T},{\mathbb P})$ with $Q_e$ (resp. $Q_m$) being a  symmetric, positive definite operator with finite trace on $U=L^2(D)^3$. Moreover, $W_e(t)$ and $W_m(t)$ are independent. The phase flow of \eqref{sto_max} preserves the stochastic symplecticity (cf. \cite{CHJ2019b}), i.e., if $\varepsilon,\mu$ are constants, for any $t\in[0,T]$,
$
\overline{\omega}(t)=\int_{D}d{\bf E}(t)\wedge d{\bf H}(t)=\overline{\omega}(0)$, ${\mathbb P}\mbox{-a.s.}$

The solution theory of \eqref{sto_max}, which is crucial in the mean-square error analysis, is presented in Section \ref{sec:2} with certain assumptions being made on the medium coefficients, the initial fields and the noise. We restrict the Maxwell operator $M$ on the closed subspace ${\mathbb V}_0$  of ${\mathbb V}:=L^2(D)^3\times L^2(D)^3$, in order to respect to all boundary conditions and divergence properties.
 These conditions and properties are important to get the $L^{p}(\Omega; C([0,T];H^1(D)^6))$-regularity ($H^1$-regularity in short) for the solution of \eqref{sto_max}, under the first order regularity and certain compatibility conditions of the initial data and the noise term; see Proposition \ref{exact_H1}. Furthermore, we can guarantee that the solution has $H^2$-regularity if more assumptions on the medium coefficients, the initial fields and the noise are employed; see Proposition \ref{exact_H2}.

In order to inherit the stochastic symplectic structure, we apply the midpoint scheme \eqref{midpoint} to discretize  \eqref{sto_max} in time in Section \ref{sec:3}.
The error is measured in $L^2(\Omega;{\mathbb V})$, and gives a bound of order $k/2$ provided that the solutions of the continuous problem \eqref{sto_max} and the temporal semidiscretization \eqref{midpoint} belong to ${\mathcal D}(M^k)$ with $k\in\{1,2\}$.
It is also shown that the divergence conservation laws \eqref{sto_max3} are preserved numerically by the semidiscretization \eqref{midpoint} in time.

We discretize the temporal semidiscretization \eqref{midpoint} further in space using a dG method, and then it results the fully discrete method \eqref{full discretization}, called the symplectic dG full discretization; see also Section \ref{sec4} for the treatment of the dG approximation of stochastic Maxwell equations. 
We refer interested readers  to \cite{Zhang2008} for  the application of dG methods to the time-harmonic stochastic Maxwell equations with color noise, 
to \cite{CZZ2008} for the application to stochastic Helmholtz-type equation, to 
\cite{A2020} for the application to stochastic Allen-Cahn equation, to \cite{BLM2020} for the application to the semi-linear stochastic wave equation, to \cite{LST2020} for the application to  stochastic conservation laws, and to \cite{CHJ2017} for the application of a symplectic local dG method to stochastic Schr\"odinger equation.
Since the highest regularity of stochastic Maxwell equations that can be guaranteed is in $H^2$, the dG space is taken to be the set of piecewise linear functions. The upwind fluxes are utilized, due to the higher convergence order than the central fluxes; see \cite{HP2015} for the deterministic case. It is shown in Theorem \ref{thm: error of eh} that the mean-square convergence order of the dG approximation \eqref{dG} is of $k-1/2$ if the exact solution of \eqref{sto_max} belongs to $L^{p}(\Omega; C([0,T];H^k(D)^6))$ with $k\in\{1,2\}$. This convergence analysis is presented in a form applied also to the full discretization  \eqref{full discretization}, which is stated in Section \ref{sec:5}. We also show that the divergence properties \eqref{sto_max3} are preserved numerically in a weak sense by the spatial semidiscretization \eqref{dG} and the full discretization \eqref{full discretization} in Proposition \ref{prop: divergence free} and Proposition \ref{prop: divergence free full}, respectively. Moreover, the asymptotic behaviors of the exact and numerical solutions of stochastic Maxwell equations with small noise are investigated in Sections \ref{sec:2}-\ref{sec:5}, respectively. 

To conclude, the main contribution of this paper is to provide a rigorous error analysis of a full discretization for stochastic Maxwell equations.  In particular, we prove that:
\begin{itemize}
\item[(i)] the exact solution and the numerical solution of temporal semidiscrete method belong to $L^{p}(\Omega; C([0,T];H^k(D)^6))$ with $k\in\{1,2\}$ depending only on the assumptions on the the medium coefficients, the initial fields and the noise, which have not been reported before to the best of our knowledge;
\item[(ii)] the mean-square error of the full discretization in $L^2(\Omega;{\mathbb V})$ is of order $k/2$ in time and of order $k-1/2$ in space $(k\in\{1,2\})$, which retains the convergence order of the upwind fluxes space discretization in the deterministic case.

\end{itemize} 

\section{Properties of stochastic Maxwell equations} \label{sec:2}
This section presents the notations and basic results for stochastic Maxwell equations, including the stochastic symplectic structure, the regularity in $L^{p}(\Omega; C([0,T];H^k(D)^6))$ with $k\in\{1,2\}$, and the small noise asymptotic behavior. Throughout this paper, we use $C$ to denote a generic constant, independent of the step sizes $\tau$ and $h$, which may differ from line to line. Let $\Gamma_j^{\pm}$ be the open faces of $D$ given by $x_j=a_j^{\pm}$, respectively, for $j=1,2,3$.
\subsection{Preliminaries}
We first collect notations  used throughout this paper.
We use the standard Sobolev spaces $W^{k,p}(D):=W^{k,p}(D,{\mathbb R})$ for $k\in {\mathbb N}_0$, $p\in[1,\infty]$, where we denote $H^{k}(D)=W^{k,2}(D)$. 
For a real number $\gamma\in(0,1)$ and a normed real vector space $V$, 
denote $C^{\gamma}([0,T];V):=\{f: [0,T]\to V~\mbox{with}~ \|f\|_{C^{\gamma}([0,T];V)}<\infty\}$ the space of all $\gamma$-H\"older continuous functions from $[0,T]$ to $V$, where
\[
\|f\|_{C^{\gamma}([0,T];V)}:=\sup_{t\in[0,T]}\|f(t)\|_{V}+\sup_{t_1,t_2\in[0,T], t_1\neq t_2}\frac{\|f(t_2)-f(t_1)\|_{V}}{|t_2-t_1|^{\gamma}}
\]

Stochastic Maxwell equations \eqref{sto_max} are studied in the  the real Hilbert space ${\mathbb V}=L^2(D)^3\times L^2(D)^3$, endowed with the inner product
 	$
 	\left\langle \begin{pmatrix}
 	{\bf E}_1\\{\bf H}_1
 	\end{pmatrix},~ \begin{pmatrix}
 	{\bf E}_2\\{\bf H}_2
 	\end{pmatrix}\right\rangle_{\mathbb V}=\int_{D}(\varepsilon {\bf E}_1\cdot {\bf E}_2
 	+\mu{\bf H}_1\cdot{\bf H}_2){\rm d}{\bf x}
 	$
for all $({\bf E}_1^{\top}, {\bf H}_1^{\top})^{\top},\; ({\bf E}_2^{\top},{\bf H}_2^{\top})^{\top}\in {\mathbb V}$, and the norm
 	$
 	\left\|\begin{pmatrix}
 	{\bf E}\\{\bf H}
 	\end{pmatrix}\right\|_{\mathbb V}=\left[\int_{D}\left(\varepsilon|{\bf E}|^2+\mu|{\bf H}|^2\right){\rm d}{\bf x}\right]^{1/2},\quad \forall~({\bf E}^{\top}, {\bf H}^{\top})^{\top}\in  {\mathbb V}.$
	This space ${\mathbb V}$ is equivalent to the usual $L^2(D)^6$ space under the assumption \eqref{assump_coe} on the coefficients $\varepsilon$ and $\mu$.
	
	In addition we use the Hilbert spaces
 \begin{align*}
     H({\rm curl},D)&:=\{ v\in L^2(D)^3:~\nabla\times v\in L^2(D)^3 \},\\[1mm]
     H_0({\rm curl},D)&:=\{ v\in H({\rm curl},D):~{\bf n}\times v|_{\partial D}={\bf 0} \},
 \end{align*}
 endowed with the norm
 \begin{equation*}
 \|u\|_{\rm curl}^2=\|u\|_{L^2(D)^3}^2+\|\nabla\times u\|^2_{L^2(D)^3},
 \end{equation*}
 and 
  \begin{align*}
     H({\rm div},D)&:=\{ v\in L^2(D)^3:~\nabla\cdot v\in L^2(D) \},\\[1mm]
     H_0({\rm div},D)&:=\{ v\in H({\rm div},D):~{\bf n}\cdot v|_{\partial D}={\bf 0} \},
 \end{align*}
 endowed with the norm
 \begin{equation*}
  \|u\|_{\rm div}^2=\|u\|_{L^2(D)^3}^2+\|\nabla\cdot u\|^2_{L^2(D)}.
 \end{equation*}
 
 
 After these preparations we introduce the Maxwell operator
 	\begin{equation}\label{M_operator}
 	M=\begin{pmatrix}
 	0& \varepsilon^{-1}\nabla\times \\
 	-\mu^{-1}\nabla\times &0 \\
 	\end{pmatrix},
	\quad 	{\mathcal D}(M)=H_0({\rm curl},D)\times H({\rm curl},D)
 	\end{equation}
on ${\mathbb V}$. 
By defining $u(t)=({\bf E}(t)^{\top},{\bf H}(t)^{\top})^{\top}$, the system \eqref{sto_max} can be rewritten as a stochastic evolution equation
\begin{equation}\label{sto_evo}
\begin{cases}
{\rm d}u(t)=Mu(t) {\rm d}t -{\rm d}W(t),\\
u(0)=u_0,
\end{cases}
\end{equation}
 where $W(t)=(\varepsilon^{-1} W_e(t)^{\top}, \; \mu^{-1}W_m(t)^{\top})^{\top}$ is a $Q$-Wiener process on ${\mathbb V}$ with 
 \[Q=\begin{pmatrix}
 \varepsilon^{-1}Q_e & 0\\
 0 & \mu^{-1} Q_m
 \end{pmatrix}.\]
 In fact, for any $a=(a_1^{\top},a_2^{\top})^{\top}$, $b=(b_1^{\top},b_2^{\top})^{\top}\in{\mathbb V}$, we have
 \begin{align*}
& {\mathbb E}\left[ \langle W(t), a \rangle_{\mathbb V} \langle W(t), b \rangle_{\mathbb V}\right]\\
 &={\mathbb E}\left[ \left(\langle W_e(t), a_1 \rangle_{U}+\langle W_m(t), a_2 \rangle_{U}\right) \left(\langle W_e(t), b_1 \rangle_{U}+\langle W_m(t), b_2 \rangle_{U}\right)\right]\\
 &=\langle Q_e a_1, b_1 \rangle_{U}+\langle Q_m a_2, b_2 \rangle_{U}
 =\langle Q a, b\rangle_{\mathbb V}.
 \end{align*}
 Note that 
$
 {\mathbb E}\|W(t)\|_{\mathbb V}^2
 =t\big(\|\varepsilon^{-\frac12}Q_{e}^{\frac12}\|^2_{HS(U,U)}+
 \|\mu^{-\frac12}Q_{m}^{\frac12}\|^2_{HS(U,U)}\big),
$
 and $Q$ still is a symmetric, positive definite operator on ${\mathbb V}$ with trace  ${\rm Tr}(Q)=\big(\|\varepsilon^{-\frac12}Q_{e}^{\frac12}\|^2_{HS(U,U)}+
 \|\mu^{-\frac12}Q_{m}^{\frac12}\|^2_{HS(U,U)}\big)$.
 It is not difficult to show that the energy of the system \eqref{sto_max} evolutes linearly with a rate ${\rm Tr}(Q)$, i.e., 
$ 
 {\mathbb E}\|u(t)\|_{\mathbb V}^2= {\mathbb E}\|u_0\|_{\mathbb V}^2 +{\rm Tr}(Q) t.
 $

Note that \eqref{sto_evo} is an infinite-dimensional Hamiltonian system. If the coefficients $\varepsilon,\mu$ are constants, 
the canonical form of the infinite-dimensional Hamiltonian system of  \eqref{sto_evo} 
reads
\begin{equation}\label{infinite}
{\rm d}u(t)={\mathbb J}^{-1}\frac{\delta {\mathcal H}}{\delta u}{\rm d}t +{\mathbb J}^{-1}\frac{\delta {\mathcal H_1}}{\delta u}{\rm d}\tilde{W}_e+{\mathbb J}^{-1}\frac{\delta {\mathcal H_2}}{\delta u}{\rm d}\tilde{W}_m,
\end{equation}
where
${\mathbb J}=\begin{pmatrix} 0 & I_3 \\-I_3 &0 \end{pmatrix}$ with $I_3$ being the identity matrix on ${\mathbb R}^{3\times 3}$, $\tilde{W}_e=({\bf 0}^{\top},W_e^{\top})^{\top}$, $\tilde{W}_m=(W_m^{\top},{\bf 0}^{\top})^{\top}$, and 
 ${\mathcal H}=-\frac12\int_D \left( \mu^{-1}{\bf E}\cdot(\nabla\times{\bf E})+\varepsilon^{-1}{\bf H}\cdot(\nabla\times{\bf H}) \right){\rm d}{\bf x},$
${\mathcal H}_1=\int_D\varepsilon^{-1}{\bf H}{\rm d}{\bf x}, ~~{\mathcal H}_2=-\int_D\mu^{-1}{\bf E}{\rm d}{\bf x}.$ The phase flow of \eqref{infinite} preserves the stochastic symplecticity, i.e., for any $t\in[0,T]$,
$
\overline{\omega}(t)=\int_{D}d{\bf E}\wedge d{\bf H}{\rm d}{\bf x},~ {\mathbb P}\mbox{-a.s.}
$
We refer to \cite{CHJ2019b} for the discussion on the symplecticity of stochastic Maxwell equations and the numerical preservation of the symplecticity by the semidiscrete methods in time.

 The domain ${\mathcal D}(M)$ includes the electric boundary condition, but neither the magnetic boundary condition nor the divergence conditions. In order to regard all conditions, we define ${\mathbb V}_0:=\{({\bf E}^{\top},{\bf H}^{\top})^{\top}\in {\mathbb V}:~ \nabla\cdot(\varepsilon {\bf E})=\nabla \cdot(\mu{\bf H})=0,\; {\bf n}\cdot (\mu{\bf H})=0 \mbox{ on } {\partial D} \}$, which is a closed subspace of ${\mathbb V}$ with the inner product and norm being defined the same as in ${\mathbb V}$.  We mainly work with the restriction $M_0$ of $M$ on ${\mathbb V}_0$.
It is known that under \eqref{assump_coe}, $M_0: {\mathcal D}(M_0)={\mathcal D}(M)\cap {\mathbb V}_0\to {\mathbb V}_0$ is skew adjoint, and thus generates a unitary $C_0$-group $\{S(t)\}_{t\in{\mathbb R}}$ on ${\mathbb V}_0$. Moreover, since $M$ maps ${\mathcal D}(M)$ into ${\mathbb V}_0$, we have ${\mathcal D}(M_0^{k})={\mathcal D}(M^k)\cap {\mathbb V_0}$ (cf. \cite{HJS2015}).

 \subsection{$H^1$-regularity}
The $H^1$-regularity of the solution is deduced by utilizing the fact that $v\in H({\rm curl},D)\cap H({\rm div},D)$ belongs to $H^1(D)^3$ if $v\times {\bf n}={\bf 0}$ or $v\cdot{\bf n}=0$ holds on $\partial D$. Moreover, the $H^1$-norm of $v$ is dominated by
 $
\|v\|_{H^1(D)^3}\leq C\left( \|v\|_{L^2(D)^3}+\|\nabla\times v\|_{L^2(D)^3}
+\|\nabla\cdot v\|_{L^2(D)}\right),
$
where the constant $C$ depends on the space domain $D$.
Since $\nabla\cdot(\varepsilon {\bf E})=0$, we get that
$
\nabla\cdot{\bf E}=\nabla\cdot\big(\varepsilon^{-1}\varepsilon {\bf E}\big)
=\varepsilon^{-1}\nabla\cdot(\varepsilon{\bf E})+\nabla(\varepsilon^{-1})\cdot (\varepsilon{\bf E})=-\varepsilon^{-1}\nabla\varepsilon\cdot{\bf E}
$
belongs to $L^2(D)^3$ if $\varepsilon\in W^{1,\infty}(D)$ with $\varepsilon\geq\delta>0$ for a constant $\delta>0$, and analogously for ${\bf H}$. 
That means that $\|\nabla\cdot{\bf E}\|_{L^2(D)}+\|\nabla\cdot{\bf H}\|_{L^2(D)}\leq C\big(\delta, \|\varepsilon\|_{W^{1,\infty}(D)}, \|\mu\|_{W^{1,\infty}(D)}\big)\|({\bf E},{\bf H})\|_{L^2(D)^6}$.
Hence, ${\mathcal D}(M_0)={\mathcal D}(M)\cap {\mathbb V}_0
\hookrightarrow H^1(D)^6$, if coefficients $\varepsilon,\mu$ satisfies certain assumptions as above. Moreover, 
\begin{equation}\label{embed H1}
\|({\bf E},{\bf H})\|_{H^1(D)^6}\leq C \|({\bf E},{\bf H})\|_{{\mathcal D}(M_0)},
\end{equation}
with $C:=C\big(\delta, \|\varepsilon\|_{W^{1,\infty}(D)}, \|\mu\|_{W^{1,\infty}(D)}\big)$.
\begin{proposition}\label{exact_H1}
Let the assumption \eqref{assump_coe} hold, and let $Q^{\frac12}\in HS({\mathbb V},\;{\mathcal D}(M_0))$ and $u_0\in L^p(\Omega;{\mathcal D}(M_0))$ for some $p\geq 2$. Then the equation \eqref{sto_evo} has a unique solution $u\in L^{p}(\Omega;\; C([0,T]; {\mathcal D}(M_0)))$ given by
 \begin{equation}\label{mild solution}
 u(t)=S(t)u_0-\int_0^tS(t-s){\rm d}W(s),
\end{equation}
where $u$ also belongs to $C^{\frac12}([0,T]; \; L^p(\Omega;\; {\mathbb V}_0))$.
Assume further that $\varepsilon,\,\mu\in W^{1,\infty}(D)$,
then
\begin{equation}\label{estimate H1}
{\mathbb E}\bigg[\sup_{t\in[0,T]}\|u(t)\|_{H^1(D)^6}^{p}\bigg]\leq C{\mathbb E}\bigg[\sup_{t\in[0,T]}\|u(t)\|_{{\mathcal D}(M_0)}^{p}\bigg]
\leq C(1+{\mathbb E}\|u_0\|_{{\mathcal D}(M_0)}^{p}),
\end{equation}
where  $C$ depends on  $T$, $\delta$, $\|\varepsilon\|_{W^{1,\infty}(D)}$, $\|\mu\|_{W^{1,\infty}(D)}$ and $\|Q^{\frac12}\|_{HS({\mathbb V},\;{\mathcal D}(M_0))}$.
\end{proposition}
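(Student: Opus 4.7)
My plan is to exploit the fact that $M_0$ is skew-adjoint on $\mathbb{V}_0$ and hence generates a unitary $C_0$-group $\{S(t)\}_{t\in\mathbb{R}}$ on $\mathbb{V}_0$. Since the drift of \eqref{sto_evo} is linear with generator $M_0$ and the noise is additive, standard stochastic evolution theory (in the spirit of Da Prato--Zabczyk) guarantees the existence of a unique mild solution \eqref{mild solution} in $L^p(\Omega;C([0,T];\mathbb{V}_0))$, so the real content is the refinement to $\mathcal{D}(M_0)$. To get it, I would combine the commutativity $M_0 S(t)=S(t)M_0$ on $\mathcal{D}(M_0)$ with the hypothesis $Q^{1/2}\in HS(\mathbb{V},\mathcal{D}(M_0))$. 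The deterministic piece gives $M_0 S(t)u_0=S(t)M_0 u_0$ and is controlled in $L^p$ by unitarity of $S(t)$ on $\mathbb{V}_0$. For the stochastic convolution, a finite-rank approximation of $Q^{1/2}$ and the closedness of $M_0$ will justify
\begin{equation*}
M_0\int_0^t S(t-s)\,{\rm d}W(s)=\int_0^t S(t-s)M_0\,{\rm d}W(s),\qquad \mathbb{P}\text{-a.s.},
\end{equation*}
together with the bound $\|M_0 Q^{1/2}\|_{HS(\mathbb{V},\mathbb{V}_0)}\le \|Q^{1/2}\|_{HS(\mathbb{V},\mathcal{D}(M_0))}$.

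From this representation, applying Burkholder--Davis--Gundy to the $\mathbb{V}_0$- and $\mathcal{D}(M_0)$-valued stochastic convolutions and using $\|S(t-s)\|_{\mathcal{L}(\mathbb{V}_0)}=1$ yields
\begin{equation*}
\mathbb{E}\sup_{t\in[0,T]}\|u(t)\|_{\mathcal{D}(M_0)}^p\leq C\bigl(1+\mathbb{E}\|u_0\|_{\mathcal{D}(M_0)}^p\bigr),
\end{equation*}
with the constant absorbing $T$ and $\|Q^{1/2}\|_{HS(\mathbb{V},\mathcal{D}(M_0))}$. Hölder continuity of $u$ as a map $[0,T]\to L^p(\Omega;\mathbb{V}_0)$ with exponent $1/2$ then follows from the splitting
\begin{equation*}
u(t_2)-u(t_1)=\bigl(S(t_2-t_1)-I\bigr)u(t_1)-\int_{t_1}^{t_2}S(t_2-s)\,{\rm d}W(s),
\end{equation*}
using $\|(S(\delta)-I)v\|_{\mathbb{V}_0}\leq \delta\,\|M_0 v\|_{\mathbb{V}_0}$ for $v\in \mathcal{D}(M_0)$ (from $S(\delta)v-v=\int_0^\delta S(r)M_0 v\,{\rm d}r$) together with the preceding $\mathcal{D}(M_0)$-moment bound, while the stochastic increment is handled by the Itô isometry and produces the $(t_2-t_1)^{1/2}$ factor.

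The $H^1$-estimate \eqref{estimate H1} is then immediate from the embedding \eqref{embed H1}, which is available under the additional assumption $\varepsilon,\mu\in W^{1,\infty}(D)$. The step demanding the most care, and hence the main obstacle, is the justification that the stochastic convolution actually takes values in $\mathcal{D}(M_0)$ and that $M_0$ passes through the Itô integral: I would argue it by approximating $Q^{1/2}$ by its finite-rank truncations $Q^{1/2}_N$ (which take values in $\mathcal{D}(M_0)$ by hypothesis), applying the commutation identity above in the finite-rank case where it is trivial, and then passing to the limit by closedness of $M_0$ in $\mathbb{V}_0$ combined with Itô isometry, rather than by formally differentiating under the stochastic integral.
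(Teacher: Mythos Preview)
Your proposal is correct and follows essentially the same route as the paper: unitarity of $S(t)$ on $\mathbb{V}_0$ (and hence on $\mathcal{D}(M_0)$) combined with a maximal inequality for the stochastic convolution gives the $\mathcal{D}(M_0)$-bound, the same splitting $u(t_2)-u(t_1)=(S(t_2-t_1)-I)u(t_1)-\int_{t_1}^{t_2}S(t_2-s)\,{\rm d}W(s)$ yields the $C^{1/2}$-regularity, and the embedding \eqref{embed H1} delivers \eqref{estimate H1}. The only difference is cosmetic: the paper invokes the stochastic-convolution estimate directly in $\mathcal{D}(M_0)$ and cites an external lemma for $\|(S(\delta)-I)v\|_{\mathbb{V}_0}\le C\delta\|v\|_{\mathcal{D}(M_0)}$, whereas you spell out the commutation $M_0S(t)=S(t)M_0$, the finite-rank approximation argument for passing $M_0$ through the It\^o integral, and the identity $S(\delta)v-v=\int_0^\delta S(r)M_0 v\,{\rm d}r$ explicitly.
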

\begin{proof}
Since $M_0$ generates a unitary $C_0$-group $\{S(t)\}_{t\in{\mathbb R}}$ on ${\mathbb V}_0$, the existence and uniqueness of the mild solution $u(t)$ of \eqref{mild solution} on ${\mathbb V}_0$ follows.
 The estimate on stochastic convolution yields
\begin{align}\label{eq 2.61}
&\Big[{\mathbb E}\big(\sup_{t\in[0,T]} \|u(t)\|_{{\mathcal D}(M_0)}^{p}\big)\Big]^{\frac{1}{p}}
\leq \Big[{\mathbb E}\big(\|u_0\|_{{\mathcal D}(M_0)}^{p}\big)\Big]^{\frac1p}\nonumber\\
&~+\bigg[{\mathbb E}\bigg(\sup_{t\in[0,T]}\Big\|\int_0^tS(t-s){\rm d}W(s)\Big\|_{{\mathcal D}(M_0)}^{p}\bigg) \bigg]^{\frac1p}
\leq  C\Big(1+\Big[{\mathbb E}\big(\|u_0\|_{{\mathcal D}(M_0)}^{p}\big)\Big]^{\frac1p}\Big),
\end{align}
where the constant $C$ depends on $T$ and $\|Q^{\frac12}\|_{HS({\mathbb V},\;{\mathcal D}(M_0))}$.

Based on \cite[Lemma 3.3]{CHJ2019a} and \eqref{eq 2.61}, for any $0\leq s\leq t \leq T$, we get
\begin{align*}
\|u(t)-u(s)\|_{L^p(\Omega; {\mathbb V}_0)} &\leq \|\big(S(t-s)-I\big)u(s)\|_{L^p(\Omega; {\mathbb V}_0)}+\left\|\int_s^t S(t-r){\rm d}W(r)\right\|_{L^p(\Omega; {\mathbb V}_0)}\\
&\leq C(1+\|u_0\|_{L^p(\Omega; {\mathcal D}(M))})(t-s)+C(t-s)^{\frac12},
\end{align*}
which leads to
\begin{align*}
\|u\|_{C^{\frac12}([0,T]; \; L^p(\Omega;\; {\mathbb V}_0))}
=\sup_{t\in[0,T]}\|u(t)\|_{L^p(\Omega;\; {\mathbb V}_0)}+\sup_{t\neq s}
\frac{\|u(t)-u(s)\|_{L^p(\Omega;\; {\mathbb V}_0)}}{|t-s|^{\frac12}}
\leq C.
\end{align*}

Utilizing the embedding \eqref{embed H1}, the assertion \eqref{estimate H1} follows from \eqref{eq 2.61}. Thus the proof is finished.
\end{proof}
 \subsection{$H^2$-regularity}
In our error analysis we need the solution $u$ of \eqref{sto_evo} taking values in $H^2(D)^6$, which relies on additional regularity properties of ${\mathcal D}(M_0^2)={\mathcal D}(M^2)\cap {\mathbb V}_0$ and some smoothness of the coefficients $\varepsilon$ and $\mu$. Assume that 
\begin{equation}\label{assump_coe2}
\varepsilon,\mu \in W^{1,\infty}(D)\cap W^{2,3}(D),\quad\mbox{with }
\varepsilon, \mu \geq \delta \mbox{ for a constant } \delta>0.
\end{equation}
In fact, for any $w=({\bf E},{\bf H})\in {\mathcal D}(M_0^2)$, we already have $w\in H^1(D)^6$ from \eqref{embed H1}. Further, 
\[
M_0^2w=\begin{pmatrix}-\varepsilon^{-1}\nabla\times\big(\mu^{-1}\nabla\times{\bf E}\big)\\[0.5em] -\mu^{-1}\nabla\times\big(\varepsilon^{-1}\nabla\times{\bf H}\big)
\end{pmatrix}\in L^2(D)^6,
\]
 and the properties of curl operator lead to 
\begin{align*}
\Delta {\bf E}&=-\nabla\times(\nabla\times {\bf E})+\nabla(\nabla\cdot {\bf E})\\
&=-\mu\nabla\times(\mu^{-1}\nabla\times{\bf E})-\mu^{-1}\nabla\mu\times\big(\nabla\times{\bf E}\big)-\nabla(\varepsilon^{-1}\nabla\varepsilon\cdot {\bf E})\in L^2(D)^3,
\end{align*}
if the coefficients $\varepsilon,\; \mu$ satisfy \eqref{assump_coe2}. Then the $H^2$-regularity of ${\bf E}$ follows from  the equivalence of $H^2$-norm and the graph norm of Laplacian $\Delta$ on $D$ under certain mixed boundary conditions, i.e.,
if  there is a unique function $v\in H^1_{\Gamma}(D)$ solving
\[
\int_D v\phi {\rm d}{\bf x}+\int_D \nabla v\cdot\nabla\phi {\rm d}{\bf x}=\int_{D}f\phi {\rm d}{\bf x},
\]
for $f\in L^2(D)$ and  $\forall$ $\phi\in H^1_{\Gamma}(D)$, then the solution $v\in H^2(D)\cap H^1_{\Gamma}(D)$  satisfies $v-\Delta v=f$ on $D$, $\partial_{\bf n}v=0$ on $\partial D\backslash \Gamma$, and $\|v\|_{H^2(D)}\leq C\left(\|v\|_{L^2(D)}+\|\Delta v\|_{L^2(D)}\right)$ with the constant $C$ depending on $D$. Here for a union $\Gamma\subseteq \partial D$ of some faces of $D$, $H^1_{\Gamma}(D):=\{v\in H^1(D)|~~{\rm tr}(v)=0 \mbox{ on }\Gamma\}$.
For each component $E_j$ (resp. $H_j$) of ${\bf E}$ (resp. ${\bf H}$), the boundary $\Gamma$ may be taken as $\Gamma_{k}^{\pm}\cup \Gamma_{\ell}^{\pm}$ (resp. $\Gamma_j^{\pm}$) with $j,k,\ell\in \{1,2,3\}$ and $k\neq \ell\neq j$. We refer to \cite{HJS2015} for more details.

\begin{proposition}\label{exact_H2}
Let $Q^{\frac12}\in HS({\mathbb V},{\mathcal D}(M_0^2))$, and $u_0\in L^p(\Omega;{\mathcal D}(M_0^2))$ for some $p\geq 2$. Under the assumption \eqref{assump_coe2}, the solution \eqref{mild solution} has the following property
\begin{equation}\label{eq 2.7}
{\mathbb E}\bigg[\sup_{t\in[0,T]}\|u(t)\|_{H^2(D)^6}^{p}\bigg]\leq C{\mathbb E}\bigg[\sup_{t\in[0,T]}\|u(t)\|_{{\mathcal D}(M_0^2)}^{p}\bigg]
\leq C(1+{\mathbb E}\|u_0\|_{{\mathcal D}(M_0^2)}^{p}),
\end{equation}
where the constant $C$ depends on $T$, $\delta$, $\|\varepsilon\|_{W^{1,\infty}(D)}$, $\|\varepsilon\|_{W^{2,3}(D)}$, $\|\mu\|_{W^{1,\infty}(D)}$, $\|\mu\|_{W^{2,3}(D)}$  and $\|Q^{\frac12}\|_{HS({\mathbb V},\;{\mathcal D}(M_0^2))}$.
\end{proposition}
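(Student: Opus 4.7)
The plan is to mirror the argument of Proposition \ref{exact_H1}, upgrading the regularity from $\mathcal{D}(M_0)$ to $\mathcal{D}(M_0^2)$ and replacing the embedding \eqref{embed H1} by its $H^2$-analogue. For the rightmost inequality in \eqref{eq 2.7} I would plug the mild formula \eqref{mild solution} into the $\mathcal{D}(M_0^2)$-norm. Since $M_0$ is skew-adjoint, $S(t)$ commutes with $M_0^2$ and restricts to a unitary group on $\mathcal{D}(M_0^2)$, so $\sup_{t\in[0,T]}\|S(t)u_0\|_{\mathcal{D}(M_0^2)}=\|u_0\|_{\mathcal{D}(M_0^2)}$ pathwise. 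The stochastic convolution is then estimated in the Hilbert space $\mathcal{D}(M_0^2)$ by a Burkholder--Davis--Gundy inequality combined with the unitarity of $S(t-s)$ and the assumption $Q^{1/2}\in HS(\mathbb{V},\mathcal{D}(M_0^2))$, exactly mimicking \eqref{eq 2.61}.

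The main obstacle is the first inequality, i.e.\ the embedding $\mathcal{D}(M_0^2)\hookrightarrow H^2(D)^6$. Here I would make rigorous the sketch given just before the proposition. For $w=(\mathbf{E},\mathbf{H})\in\mathcal{D}(M_0^2)$, the identity
\[
\Delta\mathbf{E}=-\mu\nabla\times(\mu^{-1}\nabla\times\mathbf{E})-\mu^{-1}\nabla\mu\times(\nabla\times\mathbf{E})-\nabla(\varepsilon^{-1}\nabla\varepsilon\cdot\mathbf{E})
\]
(and its $\mathbf{H}$-analogue) reduces the task to controlling each summand in $L^2(D)^3$. The first summand is bounded by $\|\mu\|_{L^\infty}\|\varepsilon\|_{L^\infty}\|M_0^2 w\|_{\mathbb{V}}$. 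The second is bounded by $\|\mu^{-1}\nabla\mu\|_{L^\infty}\|\nabla\times\mathbf{E}\|_{L^2}$ and hence by $C\|w\|_{\mathcal{D}(M_0)}$. The third summand, after expanding the gradient as $\nabla(\varepsilon^{-1}\nabla\varepsilon)\cdot\mathbf{E}+\varepsilon^{-1}\nabla\varepsilon\cdot\nabla\mathbf{E}$, is the delicate one: the first piece requires pairing the $W^{2,3}$-regularity of $\varepsilon$ with $\mathbf{E}\in L^6(D)^3$ via H\"older's inequality and the three-dimensional Sobolev embedding $H^1(D)\hookrightarrow L^6(D)$, while the second piece uses $\nabla\varepsilon\in L^\infty$ and $\nabla\mathbf{E}\in L^2(D)^{3\times 3}$; both leverage the already-proved $H^1$-control from Proposition \ref{exact_H1}. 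This explains the specific choice $W^{2,3}$ in \eqref{assump_coe2}: it is the weakest scale that, together with the $L^6$-embedding, keeps the product in $L^2$.

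Once the pointwise bound $\|\Delta\mathbf{E}\|_{L^2}+\|\Delta\mathbf{H}\|_{L^2}\leq C\|w\|_{\mathcal{D}(M_0^2)}$ is in hand, I would invoke the equivalence of the $H^2$-norm with the graph norm of the Laplacian under the mixed boundary conditions recalled in the preamble (different choices of the Dirichlet faces $\Gamma$ for each scalar component of $\mathbf{E}$ and $\mathbf{H}$, see \cite{HJS2015}). This yields $\|w\|_{H^2(D)^6}\leq C\|w\|_{\mathcal{D}(M_0^2)}$, and combining with the graph-norm estimate and taking the $p$-th moment of the supremum produces \eqref{eq 2.7}.
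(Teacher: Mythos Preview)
Your proposal is correct and follows essentially the same route as the paper. The paper's own proof is terser: it establishes the $\mathcal{D}(M_0^2)$-bound via the mild formula and a stochastic-convolution estimate exactly as you do, and for the first inequality it simply invokes the embedding $\mathcal{D}(M_0^2)\hookrightarrow H^2(D)^6$, whose justification (the Laplacian identity, the $W^{2,3}\times L^6$ H\"older argument, and the mixed-boundary graph-norm equivalence from \cite{HJS2015}) is placed in the discussion preceding the proposition rather than inside the proof itself. Your write-up thus fleshes out that preamble explicitly, but the underlying argument is the same.
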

\begin{proof}
We first prove the ${\mathcal D}(M_0^2)$-regularity of the solution. From \eqref{mild solution}, we get
\begin{align}\label{eq 2.6}
&\Big[{\mathbb E}\big(\sup_{t\in[0,T]} \|u(t)\|_{{\mathcal D}(M_0^2)}^{p}\big)\Big]^{\frac{1}{p}}
\leq \Big[{\mathbb E}\big(\|u_0\|_{{\mathcal D}(M_0^2)}^{p}\big)\Big]^{\frac1p}\\
&+\bigg[{\mathbb E}\bigg(\sup_{t\in[0,T]}\Big\|\int_0^tS(t-s){\rm d}W(s)\Big\|_{{\mathcal D}(M_0^2)}^{p}\bigg) \bigg]^{\frac1p}
\leq  C\left(1+\Big[{\mathbb E}\big(\|u_0\|_{{\mathcal D}(M_0^2)}^{p}\big)\Big]^{\frac1p}\right),\nonumber
\end{align}
where the constant $C$ depends on $T$ and $\|Q^{\frac12}\|_{HS({\mathbb V},\;{\mathcal D}(M_0^2))}$.

The first inequality in \eqref{eq 2.7} comes from the embedding ${\mathcal D}(M_0^2)\hookrightarrow H^2(D)^6$. Thus the proof is finished by combining \eqref{eq 2.6}.
\end{proof}

\subsection{Small noise asymptotic behavior}

We scale the noise in the system \eqref{sto_evo}  by  a small  parameter $\sqrt{\lambda}$, $\lambda\in{\mathbb R}^{+}$,  i.e.,
\begin{equation}\label{sto_evo_para}
\begin{cases}
{\rm d}u(t)=Mu(t) {\rm d}t -\sqrt{\lambda}{\rm d}W(t),\\
u(0)=u_0,
\end{cases}
\end{equation}
whose mild solution is given by
$
u^{u_0,\lambda}(t)=S(t)u_0-\sqrt{\lambda}\int_0^tS(t-r){\rm d}W(r).
$
Denote the stochastic convolution $W_M(t)=\int_0^tS(t-r){\rm d}W(r)$. Then for arbitrary $T>0$, $W_M(T)$ is Gaussian on  ${\mathbb V}$ with mean $0$ and covariance operator
$
Q_{T}:={\rm Cov}\big(W_M(T)\big)=\int_0^T S(r)QS^{*}(r) {\rm d}r.
$

\begin{lemma}\cite[Proposition 12.10]{PZ2014}\label{lemma 2.1}
Assume that $X$ is a Gaussian random variable with distribution $\mu={\mathcal N}(0,\widetilde{Q})$ on a Hilbert space $H$. Then the family of random variables $\{X_{\lambda}:=\sqrt{\lambda}X\}_{\lambda>0}$ (or  measures $\left\{\mu_{\lambda}={\mathcal L}\big( X_{\lambda}\big)\right\}_{\lambda>0}$) satisfies the large deviation principle with the good rate function
\begin{equation}
I(x)=\begin{cases}
\frac12\|\widetilde{Q}^{-\frac12} x \|_{H}^2, & x\in \widetilde{Q}^{\frac12}({H}),\\[0.5em]
+\infty, & {\rm otherwise},
\end{cases}
\end{equation} 
where $\widetilde{Q}^{-\frac12}$ is the pseudo inverse of $\widetilde{Q}^{\frac12}$.
\end{lemma}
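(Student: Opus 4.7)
The plan is to verify the two defining inequalities of the large deviation principle at speed $\lambda$, together with goodness of the rate function $I$. Since $\widetilde{Q}$ is the covariance operator of a Gaussian measure on $H$, it is symmetric, nonnegative and of trace class, so $\widetilde{Q}^{1/2}$ is Hilbert--Schmidt and hence compact as an operator from $H$ to $H$. The $a$-sublevel set $\{x\in H: I(x)\leq a\}$ equals $\widetilde{Q}^{1/2}(\overline{B}_H(0,\sqrt{2a}))$, which is relatively compact as the image of a bounded set under a compact operator; combined with the lower semicontinuity of $I$ obtained from its dual representation $2I(x)=\sup\{\langle \varphi,x\rangle^{2}/\langle \widetilde{Q}\varphi,\varphi\rangle:\varphi\in H,\ \widetilde{Q}\varphi\neq 0\}$ as a supremum of continuous functionals, this makes $I$ a good rate function.

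For the upper bound I would first treat half-spaces $F_{\varphi,c}=\{x\in H:\langle \varphi,x\rangle\geq c\}$. The Gaussian Laplace transform gives $\mathbb{E}\exp(\langle \varphi,X_\lambda\rangle/\lambda)=\exp(\langle \widetilde{Q}\varphi,\varphi\rangle/(2\lambda))$, so Chebyshev's inequality yields $\mu_\lambda(F_{\varphi,c})\leq \exp(-c^{2}/(2\lambda\langle \widetilde{Q}\varphi,\varphi\rangle))$, whose exponent matches $-\inf_{F_{\varphi,c}}I$ computed via the dual formula. Exponential tightness is supplied by Fernique's theorem: since $\mathbb{E}\exp(\alpha\|X\|_{H}^{2})<\infty$ for some $\alpha>0$, one has $\limsup_{\lambda\to 0}\lambda\log\mu_\lambda(\|x\|_{H}\geq R)\to -\infty$ as $R\to\infty$. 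A standard covering of compact sets by finitely many half-spaces arising from the good rate function structure then extends the bound from half-spaces to arbitrary closed subsets of $H$.

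For the lower bound, given $x_{0}=\widetilde{Q}^{1/2}g_{0}\in \widetilde{Q}^{1/2}(H)$ and an open $G\ni x_{0}$, I would invoke the Cameron--Martin formula: translating $\mu_\lambda$ by $x_{0}$ produces an equivalent measure whose Radon--Nikodym derivative is $\exp(\lambda^{-1/2}\langle g_{0},\widetilde{Q}^{-1/2}x\rangle-(2\lambda)^{-1}\|g_{0}\|_{H}^{2})$ against $\mu_\lambda$. Choosing a symmetric open neighbourhood $U\subset G-x_{0}$ of the origin and writing $\mu_\lambda(G)\geq \int_{U}\exp(\lambda^{-1/2}\langle g_{0},\widetilde{Q}^{-1/2}x\rangle-(2\lambda)^{-1}\|g_{0}\|_{H}^{2})\,d\mu_\lambda(x)$, Jensen's inequality applied to the convex exponential annihilates the linear term (odd under $x\mapsto -x$ on the symmetric set $U$) and produces $\lambda\log\mu_\lambda(G)\geq \lambda\log\mu_\lambda(U)-\tfrac{1}{2}\|g_{0}\|_{H}^{2}$, which tends to $-I(x_{0})$ because $\mu_\lambda(U)\to 1$ as $\lambda\to 0$. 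Taking $x_{0}$ toward the infimum of $I$ on $G$ completes the lower bound.

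The main obstacle is the infinite-dimensional exponential tightness step, without which the half-space upper bound cannot be promoted to arbitrary closed sets; Fernique's theorem, together with separability of $H$ and compactness of the level sets of $I$, is exactly the tool that resolves it. All remaining steps are essentially mechanical once Fernique's integrability and the Cameron--Martin shift are available.
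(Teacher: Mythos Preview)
The paper does not supply its own proof of this lemma: it is quoted verbatim as \cite[Proposition 12.10]{PZ2014} and used as a black box in the proof of Proposition~\ref{LDP_exact}. So there is nothing in the paper to compare your argument against.

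Your sketch is a correct outline of the classical proof that appears in the cited reference. The goodness of $I$ via compactness of $\widetilde{Q}^{1/2}$ and the dual formula, the Chebyshev/Laplace bound on half-spaces, and the Cameron--Martin/Jensen trick for the lower bound are all standard and correctly stated. One point deserves more care: in infinite dimensions Fernique's inequality $\mathbb{E}\exp(\alpha\|X\|_H^{2})<\infty$ by itself only controls the complement of \emph{balls}, which are not compact; genuine exponential tightness requires compact sets of the form $K_{R}=\{x:\|P_{N}x\|\le R,\ \|(I-P_{N})x\|\le \epsilon_{N}\}$ built from finite-dimensional spectral projections $P_{N}$ of $\widetilde{Q}$, with Fernique applied to the tail $(I-P_{N})X$. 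You flag exponential tightness as ``the main obstacle'' and invoke separability and compactness of the level sets, which is the right instinct, but the argument as written stops one step short of actually producing the compact exhausting family. Once that is filled in, the remainder of the proof goes through as you describe.
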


Based on Lemma \ref{lemma 2.1}, we get the following asymptotic behavior of the solution for \eqref{sto_evo_para} with small diffusion coefficient, which states that the laws of solutions satisfy the large deviation principle with the good rate function \eqref{rate function exact}.
\begin{proposition}\label{LDP_exact}
For arbitrary $T>0$ and $u_0\in{\mathbb V}$, the family of distributions $\left\{{\mathcal L}\big( u^{u_0,\lambda}(T) \big) \right\}_{\lambda>0}$ satisfies the  large deviation principle with the good rate function
\begin{equation}\label{rate function exact}
I_{T}^{u_0}(v)=\begin{cases}
\frac12\|Q_T^{-\frac12}\big( v-S(T)u_0 \big)\|_{\mathbb V}^2, & v-S(T)u_0\in Q_T^{\frac12}({\mathbb V}),\\[0.5em]
+\infty, & {\rm otherwise},
\end{cases}
\end{equation} 
where $Q_{T}^{-\frac12}$ is the pseudo inverse of $Q_T^{\frac12}$.
\end{proposition}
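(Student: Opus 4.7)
The plan is to reduce Proposition \ref{LDP_exact} to a direct application of Lemma \ref{lemma 2.1} combined with the contraction principle for large deviations. The key observation is that the mild solution decomposes as
\[
u^{u_0,\lambda}(T) = S(T)u_0 - \sqrt{\lambda}\, W_M(T),
\]
so the randomness enters only through the Gaussian stochastic convolution $W_M(T)$, and the dependence on $\lambda$ is exactly of the scaling form $\sqrt{\lambda}\, X$ to which Lemma \ref{lemma 2.1} applies.

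First, I would verify that $W_M(T)$ is a centered Gaussian random variable on $\mathbb{V}$ with covariance operator $Q_T = \int_0^T S(r)QS^{*}(r)\,{\rm d}r$. This is a standard fact about stochastic convolutions of $Q$-Wiener processes with strongly continuous semigroups (here the unitary group $\{S(t)\}_{t\in\mathbb{R}}$), and follows by computing $\mathbb{E}\bigl[\langle W_M(T), a\rangle_{\mathbb V}\langle W_M(T),b\rangle_{\mathbb V}\bigr]$ via the It\^o isometry together with the isometry property $\|S(r)\cdot\|_{\mathbb V}=\|\cdot\|_{\mathbb V}$.

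Next, applying Lemma \ref{lemma 2.1} with $H=\mathbb{V}$, $X=W_M(T)$ and $\widetilde{Q}=Q_T$ yields that the family $\{\sqrt{\lambda}\, W_M(T)\}_{\lambda>0}$ satisfies the LDP on $\mathbb{V}$ with good rate function
\[
I(x)=\begin{cases}\tfrac12\|Q_T^{-1/2}x\|_{\mathbb V}^2, & x\in Q_T^{1/2}(\mathbb V),\\ +\infty, & \text{otherwise}.\end{cases}
\]
I then transfer this LDP to $u^{u_0,\lambda}(T)$ through the continuous (in fact affine isometric) map $\Phi: \mathbb{V}\to\mathbb{V}$, $\Phi(x)=S(T)u_0-x$. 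Since $\Phi$ is a bijective homeomorphism, the contraction principle gives the LDP for $\{\Phi(\sqrt{\lambda}\,W_M(T))\}_{\lambda>0}=\{u^{u_0,\lambda}(T)\}_{\lambda>0}$ with good rate function
\[
I_T^{u_0}(v)=\inf\{I(x):\Phi(x)=v\}=I\bigl(S(T)u_0-v\bigr),
\]
which after substitution coincides with the expression \eqref{rate function exact}, noting that $v-S(T)u_0\in Q_T^{1/2}(\mathbb{V})$ iff $S(T)u_0-v\in Q_T^{1/2}(\mathbb{V})$, and $\|Q_T^{-1/2}(S(T)u_0-v)\|_{\mathbb V}=\|Q_T^{-1/2}(v-S(T)u_0)\|_{\mathbb V}$ by linearity.

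The main technical point, rather than a serious obstacle, is ensuring that the hypotheses of Lemma \ref{lemma 2.1} are met in the correct form, i.e., confirming the Gaussianity and the explicit covariance of $W_M(T)$ on $\mathbb{V}$. Once this is in place, the contraction principle applied to the deterministic translation by $S(T)u_0$ finishes the argument and automatically preserves the goodness of the rate function since $\Phi$ is a homeomorphism with compact preimages of compact sets.
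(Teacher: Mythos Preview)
Your proposal is correct and follows essentially the same route as the paper: both apply Lemma \ref{lemma 2.1} to the Gaussian variable $W_M(T)$ and then transfer the LDP through the affine map $v\mapsto S(T)u_0 - v$. The only cosmetic difference is that you invoke the contraction principle by name, whereas the paper carries out the translation step by hand, checking the upper bound on closed sets and the lower bound on open sets directly from the definition; the content is identical.
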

\begin{proof}
We define a process $Y^{\lambda}(t)=u^{u_0,\lambda}(t)-S(t)u_0$, which satisfies \eqref{sto_evo_para} with initial data $Y^{\lambda}(0)=0$. This means that $Y^{\lambda}(t)=\sqrt{\lambda}W_M(t)$. Then by the large deviation principle for Gaussian measures (Lemma \ref{lemma 2.1}), it follows that the good rate function of $\{Y^{\lambda}(T)\}_{\lambda>0}$ is given by
\begin{equation}\label{eq 2.13}
I_{T}^{0}(v)=\begin{cases}
\frac12\|Q_T^{-\frac12} v \|_{\mathbb V}^2, & v\in Q_T^{\frac12}({\mathbb V}),\\[0.5em]
+\infty, & {\rm otherwise}.
\end{cases}
\end{equation} 
In order to give the rate function of $\{u^{u_0,\lambda}(T)\}_{\lambda>0}$ based on \eqref{eq 2.13}, we use the definition of large deviation principle. Let $A\in{\mathcal B}({\mathbb V})$ be closed. Then $A-\{S(T)u_0\}$ still is closed in ${\mathcal B}({\mathbb V})$ and hence
\begin{align*}
&\limsup_{\lambda\to 0} \left[\lambda \ln {\mathbb P}\{ u^{u_0,\lambda}(T)\in A \}\right]
=\limsup_{\lambda\to 0} \left[\lambda \ln {\mathbb P}\{ Y^{\lambda}(T)\in A- \{S(T)u_0\}\}\right]\\
&\leq -\inf_{\overline{v}\in A- \{S(T)u_0\}} I_{T}^0(\overline{v})
=-\inf_{v\in A}I_{T}^{0}(v-S(T)u_0)=:-\inf_{v\in A}I_{T}^{u_0}(v).
\end{align*}
In a similar way we can check that for any open $B\in {\mathcal B}({\mathbb V})$, 
$$
\liminf_{\lambda\to 0} \left[\lambda \ln {\mathbb P}\{ u^{u_0,\lambda}(T)\in B \}\right]
\geq -\inf_{v\in B}I_{T}^{0}(v-S(T)u_0)=-\inf_{v\in B}I_{T}^{u_0}(v).
$$
Since $I_{T}^{u_0}$ fulfills the same properties as $I_{T}^{0}$, i.e. $I_{T}^{u_0}$ is a good rate function,  the proof is thus completed.
\end{proof}
\begin{remark}\label{remark 2.1}
If $Q$ commutes with $M$, then $Q_T^{\frac12}({\mathbb V})=Q^{\frac12}({\mathbb V})$.  In fact, 
$
Q_{T}=\int_0^T S(r)QS^{*}(r) {\rm d}r=TQ.
$
\end{remark}

\section{Temporal semidiscretization by stochastic symplectic method}\label{sec:3}
In this section, we study the semidiscretization in time of \eqref{sto_evo} by a midpoint scheme, which preserves the stochastic symplectic structure.  
The temporal semidiscretizations by a class of stochastic symplectic Runge-Kutta methods have been studied in \cite{CHJ2019b}. It is shown in there that the methods are convergent with order one in mean-square sense, if the solution has regularity in ${\mathcal D}(M^2)$.

For the time interval $[0,T]$, we introduce the uniform partition $0=t_0<t_1<\ldots<t_{N}=T$. Let $\tau=T/N$, and $\Delta W^{n+1}=W(t_{n+1})-W(t_n)$, $n=0,1,\ldots,N-1$. Applying the midpoint scheme to \eqref{sto_evo} in temporal direction yields
\begin{equation}\label{midpoint}
u^{n+1}=u^{n}+\frac{\tau}{2}(Mu^n+Mu^{n+1})-\Delta W^{n+1},
\end{equation}
which can also be written as
\begin{subequations}\label{midpoint2}
\begin{align}
\varepsilon{\bf E}^{n+1}=\varepsilon{\bf E}^{n}+\frac{\tau}{2}(\nabla\times{\bf H}^{n}+\nabla\times{\bf H}^{n+1})-\Delta W_{e}^{n+1},\\
\mu{\bf H}^{n+1}=\mu{\bf H}^{n}-\frac{\tau}{2}(\nabla\times{\bf E}^{n}+\nabla\times{\bf E}^{n+1})-\Delta W_{m}^{n+1}.
\end{align}
\end{subequations}
 
This scheme preserves the stochastic symplectic structure numerically, which is stated as follows.
 \begin{proposition}\cite[Theorem 4.3]{CHJ2019b}
 Let $\varepsilon,\mu$ be constants.
Under a zero boundary condition, the temporal semidiscretization \eqref{midpoint} preserves the discrete stochastic symplectic structure
		$\overline{\omega}^{n+1}=\int_{D}d{\bf E}^{n+1}\wedge d{\bf H}^{n+1}{\rm d}{\bf x} =\int_{D}{ d}{\bf E}^{n}\wedge {d}{\bf H}^{n}{\rm d}{\bf x}=\overline{\omega}^{n},~  {\mathbb P}\mbox{-a.s.}$
\end{proposition}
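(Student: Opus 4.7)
The approach is the standard verification that the implicit midpoint rule preserves the symplectic 2-form of a Hamiltonian system, adapted to the PDE setting by spatial integration by parts. Since $\Delta W^{n+1}$ is independent of the initial data $u^n$, its phase-space differential vanishes; applying $d$ to both equations of \eqref{midpoint2} yields
\begin{equation*}
\varepsilon\,(d{\bf E}^{n+1}-d{\bf E}^n)=\tfrac{\tau}{2}\,\nabla\times(d{\bf H}^n+d{\bf H}^{n+1}),\quad
\mu\,(d{\bf H}^{n+1}-d{\bf H}^n)=-\tfrac{\tau}{2}\,\nabla\times(d{\bf E}^n+d{\bf E}^{n+1}).
\end{equation*}
Set $A:=d{\bf E}^{n+1}+d{\bf E}^n$ and $B:=d{\bf H}^{n+1}+d{\bf H}^n$, so that the two differences above are proportional to $\nabla\times B$ and $\nabla\times A$, respectively.

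Next I would invoke the purely algebraic identity
$\alpha_1\wedge\beta_1-\alpha_0\wedge\beta_0=\tfrac{1}{2}(\alpha_1+\alpha_0)\wedge(\beta_1-\beta_0)+\tfrac{1}{2}(\alpha_1-\alpha_0)\wedge(\beta_1+\beta_0)$,
applied componentwise with $\alpha_j=d{\bf E}^{n+j}$ and $\beta_j=d{\bf H}^{n+j}$, and substitute the midpoint relations above to obtain the pointwise equality
\begin{equation*}
d{\bf E}^{n+1}\wedge d{\bf H}^{n+1}-d{\bf E}^n\wedge d{\bf H}^n = -\frac{\tau}{4\mu}\,A\wedge(\nabla\times A)+\frac{\tau}{4\varepsilon}\,(\nabla\times B)\wedge B.
\end{equation*}
The problem thus reduces to showing $\int_D A\wedge(\nabla\times A)\,{\rm d}{\bf x}=\int_D(\nabla\times B)\wedge B\,{\rm d}{\bf x}=0$.

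For these two identities, I would use the wedge-valued integration-by-parts formula $\int_D(\nabla\times u)\wedge v\,{\rm d}{\bf x}=\int_D u\wedge(\nabla\times v)\,{\rm d}{\bf x}+\int_{\partial D}({\bf n}\times u)\wedge v\,{\rm d}\sigma$, first with $u=v=A$ and then with $u=v=B$. Under the assumed zero boundary condition the surface remainder vanishes, which gives $\int_D A\wedge(\nabla\times A)\,{\rm d}{\bf x}=\int_D(\nabla\times A)\wedge A\,{\rm d}{\bf x}$; combining this with the phase-space antisymmetry $(\nabla\times A)_i\wedge A_i=-A_i\wedge(\nabla\times A)_i$ forces the common integral to equal its own negative, so it must vanish. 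An identical argument treats the $B$-term, yielding $\overline{\omega}^{n+1}=\overline{\omega}^n$ $\mathbb P$-almost surely.

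The main subtlety is the simultaneous vanishing of the two boundary contributions produced by the wedge-valued IBPs: each IBP leaves a tangential surface integral of the form $\int_{\partial D}({\bf n}\times \cdot)\wedge(\cdot)\,{\rm d}\sigma$, and it is only the combination of the zero boundary condition with the antisymmetry of the exterior wedge on 1-forms that closes the argument. Alternatively, one could short-circuit this by noting that \eqref{midpoint} is precisely the implicit midpoint discretisation of the canonical Hamiltonian system \eqref{infinite} on ${\mathbb V}_0$, and then appeal to the established skew-adjointness of $M_0$ to conclude symplecticity preservation abstractly.
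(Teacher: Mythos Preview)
Your argument is correct. The paper does not supply its own proof of this proposition; it merely cites \cite[Theorem 4.3]{CHJ2019b}, so there is nothing to compare against directly. What you have written is the standard verification that the implicit midpoint rule applied to a Hamiltonian system preserves the symplectic $2$-form, carried over to the Maxwell setting: differentiate the scheme with respect to initial data (the additive noise increments drop out), use the polarization identity for the wedge product, and reduce to the vanishing of $\int_D A\wedge(\nabla\times A)\,{\rm d}{\bf x}$ and its $B$-analogue. Your combination of the wedge-valued integration by parts with the antisymmetry $\alpha\wedge\beta=-\beta\wedge\alpha$ is exactly what is needed to close that step, and the zero boundary condition kills the surface term. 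The alternative ``abstract'' route you mention at the end---recognising \eqref{midpoint} as the midpoint scheme for the canonical Hamiltonian system \eqref{infinite} and invoking skew-adjointness of $M_0$---is indeed how one would expect the cited reference to proceed, and is equivalent to what you wrote out explicitly.
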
		

The divergence conservation laws \eqref{sto_max3} can be preserved numerically by the temporal semidiscretization \eqref{midpoint}.	
\begin{proposition}
 For the temporal semidiscretization \eqref{midpoint}, if 
 $Qh\in{\mathbb V}_0$ for any $h\in{\mathbb V}$,
 then for any $n=0,1,\ldots, N-1$,
  \[
 \nabla\cdot(\varepsilon{\bf E}^{n+1})=\nabla\cdot(\varepsilon{\bf E}^{n}),
 \qquad \nabla\cdot(\mu{\bf H}^{n+1})=\nabla\cdot(\mu{\bf H}^{n}),\qquad   {\mathbb P}\mbox{-a.s.}
 \]
 \end{proposition}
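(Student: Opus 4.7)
The plan is to apply the divergence operator directly to the two component equations of the midpoint scheme \eqref{midpoint2} and use the vector identity $\nabla\!\cdot\!(\nabla\!\times\!\mathbf{A})=0$ together with a divergence-free property of the Wiener increments inherited from the hypothesis $Qh\in\mathbb{V}_0$ for all $h\in\mathbb{V}$.

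First I would apply $\nabla\cdot$ to both equations of \eqref{midpoint2}. The identity $\nabla\!\cdot\!(\nabla\!\times\!\mathbf{H}^{j})=0$ and $\nabla\!\cdot\!(\nabla\!\times\!\mathbf{E}^{j})=0$ (for $j\in\{n,n+1\}$) make the curl contributions disappear, leaving
\begin{equation*}
\nabla\!\cdot\!(\varepsilon\mathbf{E}^{n+1})=\nabla\!\cdot\!(\varepsilon\mathbf{E}^{n})-\nabla\!\cdot\!\Delta W_e^{n+1},\qquad
\nabla\!\cdot\!(\mu\mathbf{H}^{n+1})=\nabla\!\cdot\!(\mu\mathbf{H}^{n})-\nabla\!\cdot\!\Delta W_m^{n+1}.
\end{equation*}
So the task reduces to showing that $\nabla\!\cdot\!\Delta W_e^{n+1}=0$ and $\nabla\!\cdot\!\Delta W_m^{n+1}=0$ hold $\mathbb{P}$-almost surely.

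The key step is to translate the hypothesis $Qh\in\mathbb{V}_0$ for every $h\in\mathbb{V}$ into a structural property of $Q_e$ and $Q_m$. Choosing $h=(h_1^\top,0)^\top$ and $h=(0,h_2^\top)^\top$ in turn, and recalling that $Q=\mathrm{diag}(\varepsilon^{-1}Q_e,\mu^{-1}Q_m)$, membership in $\mathbb{V}_0$ forces $\nabla\!\cdot\!(Q_e h_1)=0$ and $\nabla\!\cdot\!(Q_m h_2)=0$ for all $h_1,h_2\in L^2(D)^3$. Thus $\mathrm{Range}(Q_e)$ and $\mathrm{Range}(Q_m)$ lie in the divergence-free subspace of $U$. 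In particular any eigenvector $e_k$ of $Q_e$ with nonzero eigenvalue $q_k^e$ satisfies $e_k=(q_k^e)^{-1}Q_e e_k$, hence $\nabla\!\cdot\!e_k=0$, and similarly for $Q_m$.

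Finally, using the Karhunen--Lo\`eve expansion $W_e(t)=\sum_{k}\sqrt{q_k^e}\,\beta_k^e(t)\,e_k$ (and the analogous one for $W_m$), the increment
\begin{equation*}
\Delta W_e^{n+1}=\sum_{k}\sqrt{q_k^e}\bigl(\beta_k^e(t_{n+1})-\beta_k^e(t_{n})\bigr)e_k
\end{equation*}
is a $\mathbb{P}$-a.s.\ convergent series in $L^2(D)^3$ of divergence-free elements, so $\nabla\!\cdot\!\Delta W_e^{n+1}=0$ in the distributional sense; the same conclusion holds for $\Delta W_m^{n+1}$. Substituting these into the two identities above yields the claim. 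I expect the main (albeit still mild) obstacle to be writing the last step rigorously: one should justify that the distributional divergence commutes with the almost-sure limit of the expansion, which follows at once from the continuity of $\nabla\cdot:L^2(D)^3\to H^{-1}(D)$ together with the fact that each partial sum is divergence-free.
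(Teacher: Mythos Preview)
Your proof is correct and follows essentially the same approach as the paper: apply $\nabla\cdot$ to the two component equations of \eqref{midpoint2} and use $\nabla\cdot(\nabla\times\cdot)=0$. The paper's proof is a one-line reference to this curl identity, leaving the vanishing of $\nabla\cdot\Delta W_e^{n+1}$ and $\nabla\cdot\Delta W_m^{n+1}$ implicit in the hypothesis $Qh\in\mathbb{V}_0$, whereas you spell this step out carefully via the Karhunen--Lo\`eve expansion.
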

 \begin{proof}
 The proof follows from the identity $\nabla\cdot (\nabla\times U)=0$ for  $U:~{\mathbb R}^3\to {\mathbb R}^3.$
 \end{proof}
 
 The solution of the temporal semidiscretization \eqref{midpoint} also has the same regularity as the exact solution of \eqref{sto_evo}, by using the embeddings ${\mathcal D}(M_0)\hookrightarrow H^1(D)^6$ and ${\mathcal D}(M_0^2)\hookrightarrow H^2(D)^6$. They are stated below without the proof.
\begin{proposition}\label{H1 of un}
 Under the conditions of Proposition \ref{exact_H1}, the solution of the  temporal semidiscretization \eqref{midpoint} has regularity in $H^1(D)^6$, and
\begin{equation}
\max_{0\leq n\leq N}{\mathbb E}\|u^n\|_{H^1(D)^6}^{p}
\leq C(1+{\mathbb E}\|u_0\|_{{\mathcal D}(M_0)}^{p}),
\end{equation}
where the constant $C$ depends on $T$, $\delta$, $\|\varepsilon\|_{W^{1,\infty}(D)}$, $\|\mu\|_{W^{1,\infty}(D)}$ and $\|Q^{\frac12}\|_{HS({\mathbb V},\;{\mathcal D}(M_0))}$.
\end{proposition}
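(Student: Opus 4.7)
The plan is to mirror the argument of Proposition \ref{exact_H1}: I will first derive a uniform-in-$n$ bound on $\mathbb{E}\|u^n\|_{\mathcal{D}(M_0)}^p$, and then transfer it to $H^1(D)^6$ via the embedding \eqref{embed H1}. Since $M_0$ is skew-adjoint on $\mathbb{V}_0$, the operator $I-(\tau/2)M_0$ is a bijection from $\mathcal{D}(M_0)$ onto $\mathbb{V}_0$ with bounded inverse, so I can rewrite \eqref{midpoint} as the one-step recursion $u^{n+1} = T_\tau u^n - R_\tau \Delta W^{n+1}$, where $T_\tau := (I-\tfrac{\tau}{2}M_0)^{-1}(I+\tfrac{\tau}{2}M_0)$ is the Cayley transform and $R_\tau := (I-\tfrac{\tau}{2}M_0)^{-1}$. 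Iterating yields the discrete variation-of-constants formula
\[
u^n = T_\tau^n u^0 - \sum_{k=1}^{n} T_\tau^{n-k} R_\tau \Delta W^k,
\]
which is the direct discrete analogue of the mild representation \eqref{mild solution}.

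The next step is to establish two operator estimates in the graph norm: $T_\tau$ acts as an isometry on $\mathcal{D}(M_0)$ and $R_\tau$ as a contraction on $\mathcal{D}(M_0)$. Both are rational functions of the skew-adjoint $M_0$, hence commute with $M_0$; combined with the unitarity of $T_\tau$ and the contractivity of $R_\tau$ on $\mathbb{V}_0$, this yields $\|T_\tau^j v\|_{\mathcal{D}(M_0)} = \|v\|_{\mathcal{D}(M_0)}$ and $\|R_\tau v\|_{\mathcal{D}(M_0)} \leq \|v\|_{\mathcal{D}(M_0)}$ for every $j\geq 0$ and $v\in \mathcal{D}(M_0)$. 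The hypothesis $Q^{1/2}\in HS(\mathbb{V},\mathcal{D}(M_0))$ ensures $\Delta W^k\in \mathcal{D}(M_0)$ almost surely, so the recursion preserves $\mathcal{D}(M_0)$, and the deterministic contribution in the formula above is pointwise dominated by $\|u^0\|_{\mathcal{D}(M_0)}$.

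For the stochastic convolution, I would exploit the independence of the Wiener increments together with the above operator bounds and the identity $\mathbb{E}\|\Delta W^k\|_{\mathcal{D}(M_0)}^2 = \tau\|Q^{1/2}\|_{HS(\mathbb{V},\mathcal{D}(M_0))}^2$ to obtain
\[
\mathbb{E}\Big\|\sum_{k=1}^n T_\tau^{n-k} R_\tau \Delta W^k\Big\|_{\mathcal{D}(M_0)}^2 = \sum_{k=1}^n \mathbb{E}\|T_\tau^{n-k} R_\tau \Delta W^k\|_{\mathcal{D}(M_0)}^2 \leq T\|Q^{1/2}\|_{HS(\mathbb{V},\mathcal{D}(M_0))}^2,
\]
uniformly in $N$. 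Since the sum is a centered Gaussian in $\mathcal{D}(M_0)$, the equivalence of Gaussian moments upgrades this second-moment bound to any $L^p$ moment with no additional $\tau$- or $N$-dependence. Combining this with the deterministic estimate and invoking \eqref{embed H1} completes the argument. The only technical point is the verification of the graph-norm isometry/contractivity of $T_\tau$ and $R_\tau$; this reduces to algebraic manipulations using $M_0^*=-M_0$ and the commutation of $M_0$ with its resolvent, so I do not anticipate any substantive obstacle, and the result is essentially a discrete analogue of \eqref{eq 2.61}.
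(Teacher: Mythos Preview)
Your proposal is correct and is exactly the argument the paper has in mind: the paper states Propositions \ref{H1 of un}--\ref{H2 of un} ``without the proof,'' merely indicating that one uses the embedding ${\mathcal D}(M_0)\hookrightarrow H^1(D)^6$ together with the discrete mild form \eqref{discrete mild}, and your proof is precisely the discrete analogue of \eqref{eq 2.61} filled in along these lines. One notational caution: what you call $T_\tau$ (the Cayley transform) is the paper's $S_\tau$, and your $R_\tau$ is the paper's $T_\tau$; see the definitions preceding \eqref{discrete mild} and Lemma \ref{lemma Ttau}, which already records the contractivity of the resolvent you need.
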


\begin{proposition}\label{H2 of un}
 Under the conditions of Proposition \ref{exact_H2}, the solution of the  temporal semidiscretization \eqref{midpoint} has regularity in $H^2(D)^6$, and
\begin{equation}
\max_{0\leq n\leq N}{\mathbb E}\|u^n\|_{H^2(D)^6}^{p}
\leq C(1+{\mathbb E}\|u_0\|_{{\mathcal D}(M_0^2)}^{p}),
\end{equation}
where the constant $C$ depends on $T$, $\delta$, $\|\varepsilon\|_{W^{1,\infty}(D)}$, $\|\varepsilon\|_{W^{2,3}(D)}$, $\|\mu\|_{W^{1,\infty}(D)}$, $\|\mu\|_{W^{2,3}(D)}$  and $\|Q^{\frac12}\|_{HS({\mathbb V},\;{\mathcal D}(M_0^2))}$.
\end{proposition}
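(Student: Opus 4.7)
The plan is to mirror the proof of Proposition \ref{exact_H2} by writing the midpoint update in mild form and then taking advantage of the functional-calculus properties of $M_0$. First, I rewrite \eqref{midpoint} as
\[
u^{n+1}=T_\tau u^n-(I-\tfrac{\tau}{2}M)^{-1}\Delta W^{n+1},\qquad T_\tau:=(I-\tfrac{\tau}{2}M)^{-1}(I+\tfrac{\tau}{2}M),
\]
so that by induction
\[
u^n=T_\tau^{\,n} u_0-\sum_{j=1}^{n}T_\tau^{\,n-j}(I-\tfrac{\tau}{2}M)^{-1}\Delta W^{j}.
\]
Since $M_0$ is skew-adjoint on $\mathbb{V}_0$, the Cayley transform $T_\tau$ is unitary on $\mathbb{V}_0$; moreover $T_\tau$ and $(I-\tfrac{\tau}{2}M)^{-1}$ are functions of $M_0$, hence commute with $M_0^k$ for $k\in\{1,2\}$, leave $\mathcal{D}(M_0^k)$ invariant, and act as contractions (in fact isometries for $T_\tau$) on the graph norm. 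In particular $Q^{\frac12}\in HS(\mathbb V,\mathcal D(M_0^2))$ guarantees that each term $(I-\tfrac{\tau}{2}M)^{-1}\Delta W^j$ lies in $L^p(\Omega;\mathcal D(M_0^2))$.

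Next, I bound $\|u^n\|_{\mathcal D(M_0^2)}$ in $L^p(\Omega)$. The deterministic part is controlled by $\|u_0\|_{\mathcal D(M_0^2)}$ thanks to the isometry property of $T_\tau$ on the graph norm. For the stochastic sum, the independence and mean-zero property of the increments $\Delta W^j$ allow an application of a discrete Burkholder-Davis-Gundy inequality in the Hilbert space $\mathcal D(M_0^2)$, yielding
\[
\mathbb E\Bigl\|\sum_{j=1}^{n}T_\tau^{\,n-j}(I-\tfrac{\tau}{2}M)^{-1}\Delta W^{j}\Bigr\|_{\mathcal D(M_0^2)}^{p}
\le C\Bigl(\sum_{j=1}^{n}\tau\,\|Q^{\frac12}\|_{HS(\mathbb V,\mathcal D(M_0^2))}^{2}\Bigr)^{p/2}
\le C\,T^{p/2}\|Q^{\frac12}\|_{HS(\mathbb V,\mathcal D(M_0^2))}^{p}.
\]
Combining the two pieces and taking the maximum over $n$ produces
\[
\max_{0\le n\le N}\mathbb E\|u^n\|_{\mathcal D(M_0^2)}^{p}\le C\bigl(1+\mathbb E\|u_0\|_{\mathcal D(M_0^2)}^{p}\bigr),
\]
with the stated dependence of $C$ coming from $T$ and $\|Q^{\frac12}\|_{HS(\mathbb V,\mathcal D(M_0^2))}$.

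Finally, I invoke the embedding $\mathcal D(M_0^2)\hookrightarrow H^2(D)^6$ that was established in the discussion preceding Proposition \ref{exact_H2} under the regularity assumption \eqref{assump_coe2} on $\varepsilon,\mu$; this embedding involves only algebraic and elliptic-regularity manipulations and is independent of whether one is looking at the exact or the numerical solution. Applying it pointwise in $n$ transfers the graph-norm bound to the $H^2$-bound, completing the proof. The main obstacle is ensuring the BDG step is applied in the right Hilbert space and that the resolvent can be pulled past $M^2$ inside the norm; both reduce to the fact that $T_\tau$ and $(I-\tfrac{\tau}{2}M)^{-1}$ are elements of the functional calculus of the skew-adjoint operator $M_0$, so the argument is essentially the discrete analogue of the stochastic-convolution estimate used in \eqref{eq 2.6}.
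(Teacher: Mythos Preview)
Your proof is correct and follows exactly the approach the paper indicates: the paper omits the proof and simply remarks that one uses the embedding $\mathcal D(M_0^2)\hookrightarrow H^2(D)^6$ together with the same structure as in Proposition~\ref{exact_H2}, which is precisely what you do via the discrete mild formulation, the functional calculus of the skew-adjoint operator $M_0$, and a discrete BDG bound on the stochastic sum. One notational caution: the paper reserves $T_\tau$ for the resolvent $(I-\tfrac{\tau}{2}M)^{-1}$ and writes $S_\tau$ for the Cayley transform, so your use of $T_\tau$ for the latter clashes with the paper's conventions.
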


Let $S_{\tau}=\left(I-\frac{\tau}{2}M\right)^{-1}\left(I+\frac{\tau}{2}M\right)$ and $T_{\tau}=\left(I-\frac{\tau}{2}M\right)^{-1}$. The mild version of \eqref{midpoint} reads
\begin{equation}\label{discrete mild}
u^{n+1}=S_{\tau}u^n-T_{\tau}\Delta W^{n+1} =S_{\tau}^{n+1}u_0-\sum_{j=1}^{n+1}S_{\tau}^{n+1-j}T_{\tau}\Delta W^{j}.
\end{equation}
\begin{lemma}\label{lemma Ttau}
There exists a positive constant $C$ independent of $\tau$ such that
$\|I-T_{\tau}\|_{{\mathcal L}({\mathcal D}(M), {\mathbb V})}\leq C\tau.$
\end{lemma}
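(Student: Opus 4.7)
The plan is to exploit the resolvent identity $(I-\tfrac{\tau}{2}M)T_\tau = I$, which can be rearranged as
\begin{equation*}
I - T_\tau = -\tfrac{\tau}{2}\,M T_\tau.
\end{equation*}
Applied to an element $u\in\mathcal{D}(M)$, this already extracts a factor of $\tau$, so the remaining task is to bound $\|MT_\tau u\|_{\mathbb V}$ by $\|u\|_{\mathcal{D}(M)}$ uniformly in $\tau$.

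First, I would observe that on $\mathcal{D}(M)$ the resolvent $T_\tau$ commutes with $M$: for $u\in\mathcal{D}(M)$, $T_\tau u\in\mathcal{D}(M)$ and $MT_\tau u = T_\tau M u$ (a standard property of the resolvent that follows directly from $T_\tau(I-\tfrac{\tau}{2}M)=I$ on $\mathcal{D}(M)$). This reduces the problem to estimating $\|T_\tau Mu\|_{\mathbb V}$, which in turn requires a uniform bound on the operator norm $\|T_\tau\|_{\mathcal{L}(\mathbb V)}$.

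Next, I would invoke the fact recorded just after \eqref{M_operator} that $M_0$ is skew-adjoint on $\mathbb{V}_0$ and therefore generates a unitary $C_0$-group. By the Hille--Yosida theorem the resolvent then satisfies $\|(\lambda I - M_0)^{-1}\|_{\mathcal{L}(\mathbb{V}_0)} \le 1/\lambda$ for every $\lambda>0$; taking $\lambda = 2/\tau$ and factoring out $2/\tau$ yields $\|T_\tau\|_{\mathcal{L}(\mathbb V_0)} \le 1$. Stringing the three ingredients together gives, for every $u\in\mathcal{D}(M_0)$,
\begin{equation*}
\|(I-T_\tau)u\|_{\mathbb V} \;=\; \tfrac{\tau}{2}\,\|T_\tau M u\|_{\mathbb V} \;\le\; \tfrac{\tau}{2}\,\|Mu\|_{\mathbb V} \;\le\; \tfrac{\tau}{2}\,\|u\|_{\mathcal{D}(M)},
\end{equation*}
which is the claimed bound.

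The only real subtlety, and what I would flag as the main obstacle, is the fact that the skew-adjointness is stated for the restriction $M_0$ on $\mathbb{V}_0$, whereas the lemma is phrased in terms of $M$ on $\mathbb{V}$. Because the resolvent $T_\tau$ is always applied in \eqref{discrete mild} to iterates lying in $\mathbb{V}_0$ (guaranteed by the standing assumption $u_0\in\mathcal{D}(M_0)$ and $QH\subset\mathbb V_0$ used throughout Section 3), it is consistent to interpret $T_\tau$ as $(I-\tfrac{\tau}{2}M_0)^{-1}$; alternatively one can give a direct integration-by-parts argument showing that $\mathrm{Re}\langle Mv,v\rangle_{\mathbb V}\le 0$ on the relevant domain, from which $\|T_\tau\|\le 1$ follows by testing $(I-\tfrac{\tau}{2}M)v = u$ against $v$. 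Either route removes the ambiguity and completes the proof.
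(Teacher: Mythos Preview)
Your proposal is correct and follows essentially the same line as the paper: both rewrite $I-T_\tau=-\tfrac{\tau}{2}MT_\tau$, commute $M$ through $T_\tau$ on $\mathcal{D}(M)$, and conclude from $\|T_\tau\|_{\mathcal{L}(\mathbb V)}\le 1$. The paper obtains this resolvent bound by the direct testing argument you mention as an alternative (using $\langle M\widetilde v,\widetilde v\rangle_{\mathbb V}=0$, which holds on all of $\mathcal{D}(M)$ by integration by parts, so the $M$ versus $M_0$ issue does not arise).
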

\begin{proof}
We define $\widetilde{v}=T_{\tau}v$ for any $v\in{\mathcal D}(M)$, which means that $\widetilde{v}=v+\frac{\tau}{2}M\widetilde{v}$. 
Taking inner product with $\widetilde{v}$ yields
$
\frac12\Big[\|\widetilde{v}\|_{\mathbb V}^2-\|v\|_{\mathbb V}^2+\|\widetilde{v}-v\|_{\mathbb V}^2 \Big]=\frac{\tau}{2}\langle M\widetilde{v}, \widetilde{v}\rangle_{\mathbb V} = 0.
$
Hence $\|\widetilde{v}\|_{\mathbb V}=\|T_{\tau}v\|_{\mathbb V}\leq \|v\|_{\mathbb V}$ leads to $\|T_{\tau}\|_{{\mathcal L}({\mathbb V},{\mathbb V})}\leq 1$.

The conclusion of this lemma is equivalent to $\|\widetilde{v}-v\|_{\mathbb V}\leq C\tau\|v\|_{{\mathcal D}(M)}$. In fact,
$
\|\widetilde{v}-v\|_{\mathbb V}=\frac{\tau}{2}\|M\widetilde{v}\|_{\mathbb V}=\frac{\tau}{2}\|T_{\tau}Mv\|_{\mathbb V}\leq \frac{\tau}{2}\|v\|_{{\mathcal D}(M)}.
$
Therefore the proof is finished.
\end{proof}

For the semigroups $S(t_n)$ and $S_{\tau}^{n}$, we have the following estimates.

\begin{lemma}\label{estimate semigroup}
For any integer $ n\in\{1,\ldots, N\}$, there exists a positive constant $C$ independent of $\tau$ such that
$ \|S(t_n)-S_{\tau}^n\|_{{\mathcal L}({\mathcal D}(M^k),{\mathbb V})}\leq C\tau^{k/2}~ \mbox{with }~k\in\{1,2\}.$
\end{lemma}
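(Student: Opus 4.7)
The plan is to use the spectral calculus of the skew-adjoint operator $M_0$ to reduce the estimate to a scalar pointwise bound. Since $S(t_n)=e^{t_nM_0}$ and $S_\tau^n=r(\tau M_0)^n$ with $r(z)=(1+z/2)/(1-z/2)$ (the Pad\'e$(1,1)$ approximation of $\exp$ associated with the midpoint rule), and since $iM_0$ is self-adjoint with spectral resolution $\{E_\lambda\}_{\lambda\in\mathbb R}$, the functional calculus yields, for $v\in\mathcal D(M_0^k)=\mathcal D(M^k)\cap\mathbb V_0$,
\[
\|(S(t_n)-S_\tau^n)v\|_{\mathbb V}^2=\int_{\mathbb R}\bigl|e^{it_n\lambda}-r(i\tau\lambda)^n\bigr|^2\,d\langle E_\lambda v,v\rangle_{\mathbb V},
\quad
\|v\|_{\mathcal D(M_0^k)}^2=\int_{\mathbb R}(1+\lambda^{2k})\,d\langle E_\lambda v,v\rangle_{\mathbb V}.
\]

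The key scalar estimate combines two ingredients. From the Taylor expansion $r(z)-e^z=z^3/12+O(z^4)$ at the origin together with the isometry identity $|r(i\xi)|=|e^{i\xi}|=1$ on the imaginary axis, one gets $|r(i\xi)-e^{i\xi}|\leq C\min(|\xi|^3,1)$, and hence by interpolation $|r(i\xi)-e^{i\xi}|\leq C|\xi|^\alpha$ for every $\alpha\in[0,3]$. Choosing $\alpha=2$ and telescoping via $a^n-b^n=\sum_{j=0}^{n-1}a^{n-1-j}(a-b)b^j$ with $|a|=|b|=1$ gives $|e^{in\tau\lambda}-r(i\tau\lambda)^n|\leq n|e^{i\tau\lambda}-r(i\tau\lambda)|\leq Cn\tau^2\lambda^2$, which together with the trivial stability bound $\leq 2$ yields
\[
\bigl|e^{in\tau\lambda}-r(i\tau\lambda)^n\bigr|^2\leq\min\bigl(4,\;Cn^2\tau^4\lambda^4\bigr).
\]

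For $k=2$, using $n\tau\leq T$ directly on the right-hand argument gives $|e^{in\tau\lambda}-r(i\tau\lambda)^n|^2\leq CT^2\tau^2\lambda^4$. For $k=1$, I apply the geometric-mean inequality $\min(a,b)\leq\sqrt{ab}$ to balance consistency against stability, obtaining $|e^{in\tau\lambda}-r(i\tau\lambda)^n|^2\leq 2\sqrt{C}\,n\tau^2\lambda^2\leq 2\sqrt{C}\,T\tau\lambda^2$. In either case the symbol is bounded pointwise by $C\tau^k(1+\lambda^{2k})$, so integrating against $d\langle E_\lambda v,v\rangle_{\mathbb V}$ gives $\|(S(t_n)-S_\tau^n)v\|_{\mathbb V}^2\leq C\tau^k\|v\|_{\mathcal D(M_0^k)}^2$, and taking square roots completes the argument.

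The main obstacle is the $k=1$ case. A naive ``one-step error times $n$ steps'' argument using the consistency bound at $\alpha=1$, namely $\|(S(\tau)-S_\tau)v\|\leq C\tau\|Mv\|$, produces only an $O(1)$ global bound in $\mathcal L(\mathcal D(M),\mathbb V)$, with no $\tau^{1/2}$ decay whatsoever, because $n\approx T/\tau$. The missing half-power of $\tau$ has to be pulled out by trading the telescoped estimate $n\tau^2\lambda^2$ against the uniform bound $2$ via the geometric-mean trick, and this trade relies on the isometry property $|r(i\xi)|=|e^{i\xi}|=1$ coming from skew-adjointness of $M_0$ (equivalently, from A-stability of the midpoint scheme on $i\mathbb R$). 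Using the intermediate interpolation exponent $\alpha=2$ uniformly for both values of $k$ is what makes this argument go through cleanly.
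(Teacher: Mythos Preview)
Your proof is correct but takes a different route from the paper. The paper argues by a direct energy estimate: with $e^k = S(t_k)v_0 - S_\tau^k v_0$ it tests the one-step error recursion against $e^k+e^{k-1}$, uses skew-symmetry of $M$ to kill the principal term, and bounds the consistency remainder by $C\tau^2\|v_0\|^2_{\mathcal D(M)}$ per step (respectively $C\tau^3\|v_0\|^2_{\mathcal D(M^2)}$ after one further integration by parts); summing $n\le T/\tau$ steps in the squared norm and taking a square root gives the two rates. Your spectral-calculus reduction to a scalar symbol estimate is equally valid, and the geometric-mean interpolation you use for $k=1$ is the Fourier-side counterpart of the paper's $\sqrt{n\cdot\tau^2}=\sqrt{T\tau}$ loss in the squared norm. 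What each buys: the spectral approach generalises immediately to any A-stable rational approximant of $e^z$ with the right consistency order, while the paper's argument is more elementary in that it needs only the skew-symmetry identity $\langle Mu,v\rangle_{\mathbb V}=-\langle u,Mv\rangle_{\mathbb V}$ (integration by parts) rather than the full spectral theorem for unbounded operators. One minor point: the lemma is stated on $\mathcal D(M^k)$ and $\mathbb V$, whereas you invoke the spectral resolution of $M_0$ on $\mathbb V_0$; since $M$ with domain $H_0(\mathrm{curl},D)\times H(\mathrm{curl},D)$ is itself skew-adjoint on $\mathbb V$ (a standard fact the paper also uses), your argument runs verbatim with $M$ in place of $M_0$.
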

\begin{proof}
In order to estimate the error of semigroups, we denote $v(t)=S(t)v_0$ and 
$v^{k}=S_{\tau}^{k}v_0$. Then $\{v(t)\}_{t\in[0,T]}$ is the exact solution of $
\frac{\rm d}{{\rm d}t}v=M v,~ v(0)=v_0,
 $
while $\{v^k\}_{0\leq k\leq N}$ is the  solution of 
$
v^{k}=v^{k-1}+\frac{\tau}{2}\big(Mv^{k-1}+Mv^{k}\big),~ v^{0}=v_0.
$
Note that $v(t_k)=v(t_{k-1})+\int_{t_{k-1}}^{t_{k}}Mv(s){\rm d}s$ leads to
\[
e^{k}=e^{k-1}+\frac{\tau}{2}\big(Me^{k-1}+Me^{k}\big)+\int_{t_{k-1}}^{t_k} \Big[Mv(s)-\frac{1}{2}M v(t_{k-1})-\frac{1}{2}M v(t_{k})\Big]{\rm d}s,
\]
where $e^k=v(t_k)-v^k$.
Applying $\langle \cdot,~e^{k}+e^{k-1}\rangle_{\mathbb V}$ to both sides of the above equation, and using the skew-adjoint property of the operator $M$, we get
\begin{align}\label{eq 3.4}
\|e^k\|_{\mathbb V}^{2}&=\|e^{k-1}\|_{\mathbb V}^{2} +\int_{t_{k-1}}^{t_k} \Big\langle Mv(s)-\frac{1}{2}M v(t_{k-1})-\frac{1}{2}M v(t_{k}),~ e^{k}+e^{k-1}\Big\rangle_{\mathbb V} {\rm d}s\nonumber\\
&=\|e^{k-1}\|_{\mathbb V}^{2} -\frac12\int_{t_{k-1}}^{t_k} \Big\langle \int_{t_{k-1}}^{s} Mv(r){\rm d}r- \int_{s}^{t_{k}} Mv(r){\rm d}r,~ Me^{k}+Me^{k-1}\Big\rangle_{\mathbb V} {\rm d}s\\
&\leq \|e^{k-1}\|_{\mathbb V}^{2} +C\tau^2\Big(\sup_{t\in[0,T]}\|v(t)\|^2_{{\mathcal D}(M)}+ \max_{0\leq k\leq N}\|v^k\|^2_{{\mathcal D}(M)}\Big) \nonumber\\
&\leq  \|e^{k-1}\|_{\mathbb V}^{2}+C\tau^2\|v_0\|^2_{{\mathcal D}(M)},\nonumber
\end{align}
which yields $\max\limits_{1\leq k\leq N}\|e^k\|_{\mathbb V}=\max\limits_{1\leq k\leq N}\|\left(S(t_k)-S_{\tau}^{k}\right)v_0\|_{\mathbb V}\leq C\tau^{\frac12}\|v_0\|_{{\mathcal D}(M)}$.

In the other hand, based on \eqref{eq 3.4}, 
\begin{align*}
&\|e^k\|_{\mathbb V}^{2}=\|e^{k-1}\|_{\mathbb V}^{2} -\frac12\int_{t_{k-1}}^{t_k} \Big\langle \int_{t_{k-1}}^{s} Mv(r){\rm d}r- \int_{s}^{t_{k}} Mv(r){\rm d}r,~ Me^{k}+Me^{k-1}\Big\rangle_{\mathbb V} {\rm d}s\\
&=\|e^{k-1}\|_{\mathbb V}^{2} +\frac12\int_{t_{k-1}}^{t_k} \Big\langle \Big(\int_{t_{k-1}}^{s} \int_{t_{k-1}}^{r}- \int_{s}^{t_{k}}\int_{t_{k-1}}^{r} \Big)Mv(\xi){\rm d}\xi{\rm d}r,~ M^2(e^{k}+e^{k-1})\Big\rangle_{\mathbb V} {\rm d}s\\
&\leq  \|e^{k-1}\|_{\mathbb V}^{2}+C\tau^3\|v_0\|^2_{{\mathcal D}(M^2)},
\end{align*}
which yields $\max\limits_{1\leq k\leq N}\|e^k\|_{\mathbb V}=\max\limits_{1\leq k\leq N}\|\left(S(t_k)-S_{\tau}^{k}\right)v_0\|_{\mathbb V}\leq C\tau\|v_0\|_{{\mathcal D}(M^2)}$.
Therefore,  the proof is finished.
\end{proof}

\begin{theorem}\label{error of midpoint}
Let $Q^{\frac12}\in HS({\mathbb V},{\mathcal D}(M^k))$ and $u_0\in L^2(\Omega;{\mathcal D}(M^k))$ with $k\in\{1,2\}$. For the temporal semidiscretization \eqref{midpoint}, we have
	\begin{equation}
	\begin{split}
	\max_{1\leq n\leq N}\big(\mathbb{E}\|u(t_n)-u^n\|_{\mathbb V}^2\big)^{1/2}\leq C\tau^{k/2},\quad \mbox{for }k\in\{1,2\},
	\end{split}
	\end{equation}
	where the positive constant $C$ depends on $T$, $\|u_0\|_{L^2(\Omega;{\mathcal D}(M^k))}$ and $\|Q^{\frac12}\|_{HS({\mathbb V},{\mathcal D}(M^k))}$, but independent of $\tau$ and $n$.
\end{theorem}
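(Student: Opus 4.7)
The plan is to subtract the mild formulations \eqref{mild solution} and \eqref{discrete mild} directly. Rewriting the discrete Wiener sum as $\sum_{j=1}^{n}S_{\tau}^{n-j}T_{\tau}\Delta W^{j}=\sum_{j=1}^{n}\int_{t_{j-1}}^{t_{j}}S_{\tau}^{n-j}T_{\tau}\,dW(s)$, I obtain
\[
u(t_{n})-u^{n}=[S(t_{n})-S_{\tau}^{n}]u_{0}-\sum_{j=1}^{n}\int_{t_{j-1}}^{t_{j}}\bigl[S(t_{n}-s)-S_{\tau}^{n-j}T_{\tau}\bigr]dW(s).
\]
The deterministic part is handled immediately by Lemma~\ref{estimate semigroup} combined with $u_{0}\in L^{2}(\Omega;\mathcal{D}(M^{k}))$, contributing a term of order $\tau^{k/2}\|u_{0}\|_{L^{2}(\Omega;\mathcal{D}(M^{k}))}$. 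For the stochastic part, It\^o's isometry reduces the squared $L^{2}(\Omega;{\mathbb V})$-error to
\[
\sum_{j=1}^{n}\int_{t_{j-1}}^{t_{j}}\bigl\|[S(t_{n}-s)-S_{\tau}^{n-j}T_{\tau}]Q^{1/2}\bigr\|_{HS({\mathbb V},{\mathbb V})}^{2}\,ds.
\]

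The key manipulation is the telescoping decomposition
\[
S(t_{n}-s)-S_{\tau}^{n-j}T_{\tau}=S(t_{n}-t_{j})[S(t_{j}-s)-I]+S(t_{n}-t_{j})[I-T_{\tau}]+[S(t_{n}-t_{j})-S_{\tau}^{n-j}]T_{\tau},
\]
after which the three Hilbert--Schmidt contributions are bounded separately. Unitarity of $S(t_{n}-t_{j})$ on ${\mathbb V}_{0}$ eliminates it from the first two pieces. Piece~(i) then reduces to $\|[S(t_{j}-s)-I]Q^{1/2}\|_{HS}$, controlled by $(t_{j}-s)\|Q^{1/2}\|_{HS({\mathbb V},\mathcal{D}(M))}$ through the $C_{0}$-group identity $(S(r)-I)v=\int_{0}^{r}MS(\rho)v\,d\rho$ applied term-by-term in a Hilbert--Schmidt basis. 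Piece~(ii) is handled directly by Lemma~\ref{lemma Ttau}, giving $\|[I-T_{\tau}]Q^{1/2}\|_{HS}\le C\tau\|Q^{1/2}\|_{HS({\mathbb V},\mathcal{D}(M))}$. For piece~(iii) I invoke Lemma~\ref{estimate semigroup} applied to the operator $T_{\tau}Q^{1/2}$: since $T_{\tau}$ commutes with $M$ and $\|T_{\tau}\|_{\mathcal{L}({\mathbb V})}\le 1$, one has $\|T_{\tau}Q^{1/2}\|_{HS({\mathbb V},\mathcal{D}(M^{k}))}\le\|Q^{1/2}\|_{HS({\mathbb V},\mathcal{D}(M^{k}))}$, which yields $\|[S(t_{n}-t_{j})-S_{\tau}^{n-j}]T_{\tau}Q^{1/2}\|_{HS}\le C\tau^{k/2}\|Q^{1/2}\|_{HS({\mathbb V},\mathcal{D}(M^{k}))}$.

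Summing the three squared contributions over the $n\le N$ subintervals, each of length $\tau$, gives a total of order $T\tau^{k}$, and combining with the deterministic contribution and taking the square root yields the claimed bound $C\tau^{k/2}$ uniformly in $n$. The main obstacle I anticipate is piece~(iii), which is the rate-limiting term: the exponent $k/2$ in Lemma~\ref{estimate semigroup} is what dictates the overall convergence rate, so one must ensure that the $\mathcal{D}(M^{k})$-Hilbert--Schmidt regularity of $Q^{1/2}$ is not degraded when passed through the rational factor $T_{\tau}$. The commutation $T_{\tau}M=MT_{\tau}$ and the contractivity $\|T_{\tau}\|_{\mathcal{L}({\mathbb V})}\le 1$ (inherited from the skew-adjointness of $M$ as in the proof of Lemma~\ref{lemma Ttau}) are precisely the ingredients that make this step go through; the rest is Hilbert--Schmidt bookkeeping over the three telescoped pieces.
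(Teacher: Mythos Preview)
Your proof is correct and follows essentially the same route as the paper's: subtract the mild formulations, apply It\^o's isometry, telescope $S(t_{n}-s)-S_{\tau}^{n-j}T_{\tau}$ into three pieces, and bound each via Lemma~\ref{lemma Ttau}, Lemma~\ref{estimate semigroup}, and the $C_{0}$-group estimate $\|(S(r)-I)v\|_{\mathbb V}\le r\|v\|_{\mathcal{D}(M)}$. The only cosmetic difference is the order of the telescoping---the paper writes $[S(t_{n}-t_{j})-S_{\tau}^{n-j}]+S_{\tau}^{n-j}[I-T_{\tau}]$ rather than your $S(t_{n}-t_{j})[I-T_{\tau}]+[S(t_{n}-t_{j})-S_{\tau}^{n-j}]T_{\tau}$---so that the paper avoids passing $T_{\tau}$ through the $\mathcal{D}(M^{k})$-norm, but your commutation argument handles this equally well.
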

\begin{proof}
From the mild solutions \eqref{mild solution}  and \eqref{discrete mild},
we use  It\^o isometry to get
\begin{align*}
&{\mathbb E}\|u(t_n)-u^n\|_{\mathbb V}^2\leq 2{\mathbb E}\|\big(S(t_n)-S_{\tau}^n\big)u_0\|^2_{\mathbb V}+2{\mathbb E}\bigg\| \sum_{j=1}^{n}\int_{t_{j-1}}^{t_j} \big(S(t_n-r)-S_{\tau}^{n-j}T_{\tau}\big) {\rm d}W \bigg\|^2_{\mathbb V}\\
&=2{\mathbb E}\|\big(S(t_n)-S_{\tau}^n\big)u_0\|^2_{\mathbb V}+2\sum_{j=1}^{n} \int_{t_{j-1}}^{t_j} \left\|\big(S(t_n-r)-S_{\tau}^{n-j}T_{\tau}\big)Q^{\frac12} \right\|^2_{HS({\mathbb V},{\mathbb V})}{\rm d}r. 
\end{align*}
The first term on the right-hand side is estimated by Lemma \ref{estimate semigroup}, and the second term on the right-hand side can be estimated by, for $r\in[t_{j-1},t_j]$,
\begin{align*}
\left\|\big(S(t_n-r)-S_{\tau}^{n-j}T_{\tau}\big)Q^{\frac12} \right\|_{HS({\mathbb V},{\mathbb V})}\leq \|S(t_n-t_j)\big(S(t_j-r)-I\big)Q^{\frac12}\|_{HS({\mathbb V},{\mathbb V})}\\
+\|\big(S(t_n-t_j)-S_{\tau}^{n-j}\big)Q^{\frac12}\|_{HS({\mathbb V},{\mathbb V})}+\|S_{\tau}^{n-j}\big(I-T_{\tau}\big)Q^{\frac12}\|_{HS({\mathbb V},{\mathbb V})}\\
\leq C\tau\|Q^{\frac12}\|_{HS({\mathbb V},{\mathcal D}(M))}+C\tau^{k/2}\|Q^{\frac12}\|_{HS({\mathbb V},{\mathcal D}(M^k))},\qquad \mbox{for }k\in\{1,2\},
\end{align*}
where in the last step, we use Lemmas \ref{lemma Ttau}-\ref{estimate semigroup} and \cite[Lemma 3.3]{CHJ2019a}. 
Combining them together, we finish the proof.
\end{proof}

Applying the midpoint scheme to discretize the system \eqref{sto_evo_para} with small noise, we get that
$
u^N=S_{\tau}^{N}u_0-\sqrt{\lambda}\sum_{j=1}^{N}S_{\tau}^{N-j}T_{\tau}\Delta W^{j}.
$
Let $W_{M;N}:=\sum_{j=1}^{N}S_{\tau}^{N-j}T_{\tau}\Delta W^{j}$. Then it is Gaussian on ${\mathbb V}$ with mean $0$ and covariance operator 
$
Q_{T;N}:={\rm Cov}(W_{M;N})=\tau\sum_{j=1}^{N}\big(S_{\tau}^{N-j}T_{\tau}\big)Q(S_{\tau}^{N-j}T_{\tau}\big)^{*}.
$
Analogously, as in Proposition \ref{LDP_exact}, we get the following result.
\begin{proposition}\label{LDP_semi}
For integer $N>0$ and $u_0\in{\mathbb V}$, the family of distributions $\{{\mathcal L}\big( u^{N;\;u_0,\lambda} \big)\}_{\lambda >0}$ satisfies the  large deviation principle with the good rate function
\begin{equation}
I_{T,N}^{u_0}(v)=\begin{cases}
\frac12\| \big(Q_{T;N}\big)^{-\frac12}\big(v-S_{\tau}^N u_0\big) \|_{\mathbb V}^2, & v-S_{\tau}^{N}u_0\in \big(Q_{T;N}\big)^{\frac12}({\mathbb V}),\\
+\infty,& {\rm otherwise}.
\end{cases}
\end{equation}
\end{proposition}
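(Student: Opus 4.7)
The plan is to mimic the proof of Proposition \ref{LDP_exact} but with the discrete stochastic convolution $W_{M;N}$ playing the role of $W_M(T)$. I would proceed as follows.

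First, I would verify that $W_{M;N}=\sum_{j=1}^{N}S_{\tau}^{N-j}T_{\tau}\Delta W^{j}$ is a centered Gaussian random variable on ${\mathbb V}$, since each increment $\Delta W^{j}$ is a centered Gaussian on ${\mathbb V}$ with covariance $\tau Q$, the increments are independent, and $S_{\tau}^{N-j}T_{\tau}$ is a bounded linear operator on ${\mathbb V}$ (using the contractivity estimates in Lemma \ref{lemma Ttau} and the isometry of $S_\tau$). A direct covariance computation, using the independence of $\{\Delta W^j\}$ and the defining property ${\mathbb E}[\langle \Delta W^j,a\rangle_{\mathbb V}\langle \Delta W^j,b\rangle_{\mathbb V}]=\tau\langle Qa,b\rangle_{\mathbb V}$, then yields
\[
{\rm Cov}(W_{M;N})=\tau\sum_{j=1}^{N}(S_\tau^{N-j}T_\tau)Q(S_\tau^{N-j}T_\tau)^{*}=Q_{T;N},
\]
so $W_{M;N}\sim{\mathcal N}(0,Q_{T;N})$ on ${\mathbb V}$.

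Next, I would set $Y^{N;\lambda}:=u^{N;u_0,\lambda}-S_\tau^{N}u_0=-\sqrt{\lambda}\,W_{M;N}$ and invoke Lemma \ref{lemma 2.1} with $\widetilde Q=Q_{T;N}$ (using that a centered Gaussian law on a Hilbert space is invariant under sign change). This provides the large deviation principle for $\{Y^{N;\lambda}\}_{\lambda>0}$ with good rate function
\[
I_{T,N}^{0}(v)=\begin{cases}
\frac12\|(Q_{T;N})^{-\frac12}v\|_{\mathbb V}^{2}, & v\in (Q_{T;N})^{\frac12}({\mathbb V}),\\
+\infty, & \text{otherwise}.
\end{cases}
\]

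Finally, I would transfer the LDP from $Y^{N;\lambda}$ to $u^{N;u_0,\lambda}$ by a deterministic translation. For any closed $A\in{\mathcal B}({\mathbb V})$, the set $A-\{S_\tau^{N}u_0\}$ is closed, so
\[
\limsup_{\lambda\to 0}\lambda\ln{\mathbb P}\{u^{N;u_0,\lambda}\in A\}=\limsup_{\lambda\to 0}\lambda\ln{\mathbb P}\{Y^{N;\lambda}\in A-\{S_\tau^{N}u_0\}\}\leq -\inf_{v\in A}I_{T,N}^{u_0}(v),
\]
and the analogous liminf bound holds for open sets, exactly as at the end of the proof of Proposition \ref{LDP_exact}. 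Since $I_{T,N}^{u_0}$ inherits the good rate function property from $I_{T,N}^{0}$ (translation preserves lower semicontinuity and compactness of sublevel sets), the conclusion follows.

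The only mildly delicate point is confirming that the operator $Q_{T;N}$ really is the covariance of $W_{M;N}$ and that Lemma \ref{lemma 2.1} applies to it on ${\mathbb V}$; once this is in place, the remainder is a verbatim repetition of the translation argument already carried out for the exact solution, so I do not anticipate any serious obstacle.
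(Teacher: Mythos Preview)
Your proposal is correct and follows exactly the approach the paper intends: the paper does not write out a separate proof but simply states that the result follows ``analogously, as in Proposition \ref{LDP_exact},'' and your argument is precisely that analogy carried out in detail. The one extra remark you make about the sign of $Y^{N;\lambda}=-\sqrt{\lambda}\,W_{M;N}$ being harmless by symmetry of centered Gaussians is a fair observation (the same sign issue is tacitly present in the proof of Proposition \ref{LDP_exact}).
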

\begin{remark}\label{remark 3.1}
If $Q$ commutes with $M$, then $\big(Q_{T;N}\big)^{\frac12}({\mathbb V})=\big(T_{\tau}Q^{\frac12}\big)({\mathbb V})\subset Q^{\frac12}({\mathbb V})$. 
In fact,
$
Q_{T;N}=\tau\sum_{j=1}^{N}\big(S_{\tau}^{N-j}T_{\tau}\big)Q(S_{\tau}^{N-j}T_{\tau}\big)^{*}=\tau N T_{\tau}Q T_{\tau}^*=TT_{\tau}Q T_{\tau}^*
$
 yields the assertion.
\end{remark}

\begin{proposition}
Assume that  $Q$ commutes with $M$, and $v,\; u_0\in \big(T_{\tau}Q^{\frac12}\big)({\mathbb V})$, then there is a constant $C$ depending on $T$, $\|Q^{-\frac12}v\|_{{\mathcal D}(M)}$ and $\|Q^{-\frac12} u_0\|_{{\mathcal D}(M)}$ such that
$
\left| I_T^{u_0}(v)- I_{T,N}^{u_0}(v)\right|\leq C\tau^{\frac12}.
$

In addition, if $Q^{-\frac12}v$, $Q^{-\frac12}u_0\in {\mathcal D}(M^2)$, then there is a constant $C$ depending on $T$, $\|Q^{-\frac12}v\|_{{\mathcal D}(M^2)}$ and $\|Q^{-\frac12} u_0\|_{{\mathcal D}(M^2)}$ such that
$
\left| I_T^{u_0}(v)- I_{T,N}^{u_0}(v)\right|\leq C\tau.
$
\end{proposition}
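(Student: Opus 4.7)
The plan is to use the commutativity of $Q$ with $M$ to reduce both rate functions to squared $\mathbb V$-norms of concrete expressions in $\tilde v := Q^{-\frac12} v$ and $\tilde u_0 := Q^{-\frac12} u_0$, after which the difference is driven almost entirely by the semigroup error of Lemma~\ref{estimate semigroup}. By Remark~\ref{remark 2.1}, $Q_T^{\frac12} = \sqrt{T}\, Q^{\frac12}$, so, since $Q^{\frac12}$ commutes with $S(T)$,
\begin{align*}
2 I_T^{u_0}(v) \;=\; \tfrac{1}{T}\bigl\| \tilde v - S(T)\tilde u_0\bigr\|_{\mathbb V}^2.
\end{align*}
For the discrete rate function, Remark~\ref{remark 3.1} identifies the range of $Q_{T;N}^{\frac12}$ with that of $\sqrt{T}\,T_\tau Q^{\frac12}$; exploiting the fact that $Q^{\frac12}$ and $T_\tau$ commute as functions of $M$, the Moore--Penrose pseudoinverse is then computed explicitly to give
\begin{align*}
2 I_{T,N}^{u_0}(v) \;=\; \tfrac{1}{T}\bigl\| (I-\tfrac{\tau}{2}M)(\tilde v - S_\tau^N \tilde u_0)\bigr\|_{\mathbb V}^2.
\end{align*}

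With these reductions in hand, set $A := \tilde v - S(T)\tilde u_0$ and $B := (I-\tfrac{\tau}{2}M)(\tilde v - S_\tau^N \tilde u_0)$ and apply
\[
\bigl| I_T^{u_0}(v) - I_{T,N}^{u_0}(v)\bigr| \;\leq\; \tfrac{1}{2T}\,\|A-B\|_{\mathbb V}\bigl(\|A\|_{\mathbb V} + \|B\|_{\mathbb V}\bigr).
\]
I would then split
$A - B = (S_\tau^N - S(T))\tilde u_0 + \tfrac{\tau}{2}M(\tilde v - S_\tau^N \tilde u_0)$.
The first summand is bounded directly by Lemma~\ref{estimate semigroup}, yielding $C\tau^{k/2}\|\tilde u_0\|_{\mathcal D(M^k)}$ for $k\in\{1,2\}$. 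For the second summand, because $S_\tau$ is built via functional calculus from the skew-adjoint $M$, the operator $S_\tau^N$ is a $\mathbb V$-isometry commuting with $M$ (a variant of the computation in the proof of Lemma~\ref{lemma Ttau} gives $\|S_\tau v\|_{\mathbb V}=\|v\|_{\mathbb V}$), so $\|M S_\tau^N \tilde u_0\|_{\mathbb V} = \|M\tilde u_0\|_{\mathbb V}$ and the term is bounded by $C\tau(\|\tilde v\|_{\mathcal D(M)}+\|\tilde u_0\|_{\mathcal D(M)})$. Combining gives $\|A-B\|_{\mathbb V}=O(\tau^{k/2})$, while $\|A\|_{\mathbb V}+\|B\|_{\mathbb V}$ is trivially bounded (via triangle inequality, isometry of $S(T)$ and $S_\tau^N$, and the $\tau M/2$ contribution) by a constant depending on $\|\tilde v\|_{\mathcal D(M)}$ and $\|\tilde u_0\|_{\mathcal D(M)}$. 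Rewriting $\tilde v,\tilde u_0$ back in terms of $Q^{-\frac12} v$ and $Q^{-\frac12} u_0$ yields the two claimed bounds $C\tau^{\frac12}$ and $C\tau$ with the stated dependence on $\mathcal D(M)$- respectively $\mathcal D(M^2)$-norms.

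The main obstacle I expect is the clean derivation of the explicit formula for $2 I_{T,N}^{u_0}(v)$: although Remark~\ref{remark 3.1} identifies ranges, the operator $Q_{T;N}^{\frac12}$ is the self-adjoint square root of $T\,T_\tau Q T_\tau^*$ and is not literally equal to $\sqrt{T}\,T_\tau Q^{\frac12}$; they differ by a partial isometry. I would justify the identity $\|Q_{T;N}^{-\frac12}(v-S_\tau^N u_0)\|_{\mathbb V} = \frac{1}{\sqrt{T}}\|(I-\tfrac{\tau}{2}M)(\tilde v-S_\tau^N\tilde u_0)\|_{\mathbb V}$ by invoking the spectral theorem of the skew-adjoint $M$ to simultaneously diagonalise $T_\tau T_\tau^* = (I-\tfrac{\tau^2}{4}M^2)^{-1}$ and $Q$, so that $T_\tau Q^{\frac12}$ and $Q_{T;N}^{\frac12}/\sqrt T$ have a common modulus in functional calculus and therefore yield the same pseudoinverse norm. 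Once this identification is secured, the rest of the argument is a short assembly of Lemmas~\ref{lemma Ttau} and~\ref{estimate semigroup}.
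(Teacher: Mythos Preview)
Your proposal is correct and follows essentially the same approach as the paper: reduce both rate functions via commutativity to $\frac{1}{2T}\|Q^{-1/2}(v-S(T)u_0)\|_{\mathbb V}^2$ and $\frac{1}{2T}\|Q^{-1/2}T_\tau^{-1}(v-S_\tau^N u_0)\|_{\mathbb V}^2$, then apply $|a^2-b^2|\le |a-b|(|a|+|b|)$ and control $A-B$ by a $\frac{\tau}{2}M$-term plus the semigroup error from Lemma~\ref{estimate semigroup}. Your concern about the pseudoinverse can be settled without the spectral theorem, since for $Q_{T;N}=T\,T_\tau Q T_\tau^*$ one has directly $\|Q_{T;N}^{-1/2}x\|_{\mathbb V}^2=\langle Q_{T;N}^{-1}x,x\rangle_{\mathbb V}=\frac{1}{T}\|Q^{-1/2}T_\tau^{-1}x\|_{\mathbb V}^2$, which is exactly the formula the paper asserts.
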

\begin{proof}
Note that under the conditions of this proposition, 
\[
I_{T}^{u_0}(v)=\frac{1}{2T}\left\| Q^{-\frac12} \left(v-S(T)u_0\right)\right\|^2_{\mathbb V},\quad
I_{T,N}^{u_0}(v)=\frac{1}{2T}\left\| Q^{-\frac12}T_{\tau}^{-1} \left(v-S_{\tau}^N u_0\right)\right\|^2_{\mathbb V}.
\]
Thus,
\begin{align}\label{eq 3.6}
&\left| I_{T}^{u_0}(v)-I_{T,N}^{u_0}(v)\right|\\
&=\frac{1}{2T}\Big|\Big\langle
Q^{-\frac12} \left(v-S(T)u_0\right)+Q^{-\frac12}T_{\tau}^{-1} \left(v-S_{\tau}^N u_0\right),\nonumber\\
&\qquad\qquad Q^{-\frac12} \left(v-S(T)u_0\right) -Q^{-\frac12}T_{\tau}^{-1} \left(v-S_{\tau}^N u_0\right)\Big\rangle_{\mathbb V}\Big| \nonumber\\
&\leq  C
\left\|Q^{-\frac12} \left(v-S(T)u_0\right) -Q^{-\frac12}T_{\tau}^{-1} \left(v-S_{\tau}^N u_0\right)\right\|_{\mathbb V}\nonumber\\
&\leq C \left[\left\|Q^{-\frac12} (I-T_{\tau}^{-1})\left(v-S(T)u_0\right) \right\|_{\mathbb V}+\left\|Q^{-\frac12} T_{\tau}^{-1}\left(S(T)-S_{\tau}^N\right)u_0 \right\|_{\mathbb V}\right],\nonumber
\end{align}
where the constant $C$ depends on $T,\; \| Q^{-\frac12}T_{\tau}^{-1}v\|_{\mathbb V},\; \| Q^{-\frac12}T_{\tau}^{-1}u_0\|_{\mathbb V}$.
Since $I-T_{\tau}^{-1}=\frac{\tau}{2}M$, 
\begin{align*}
\left\|Q^{-\frac12} (I-T_{\tau}^{-1})\left(v-S(T)u_0\right) \right\|_{\mathbb V}
\leq C(\|Q^{-\frac12}Mv\|_{\mathbb V},\; \|Q^{-\frac12}Mu_0\|_{\mathbb V})\tau.
\end{align*}
And for the second term on the right-hand side of \eqref{eq 3.6},
\begin{align*}
&\left\|Q^{-\frac12} T_{\tau}^{-1}\left(S(T)-S_{\tau}^N\right)u_0 \right\|_{\mathbb V}\\
&\leq \left\|Q^{-\frac12} \left(S(T)-S_{\tau}^N\right)u_0 \right\|_{\mathbb V}+\frac{\tau}{2}\left\|Q^{-\frac12} M\left(S(T)-S_{\tau}^N\right)u_0 \right\|_{\mathbb V}\\
&\leq \left\|Q^{-\frac12} \left(S(T)-S_{\tau}^N\right)u_0 \right\|_{\mathbb V}+C(\|Q^{-\frac12}Mu_0\|_{\mathbb V})\tau.
\end{align*}
Lemma \ref{estimate semigroup} yields the conclusion and thus the proof is finished.
\end{proof}
\section{Spatial semidiscretization by dG method}\label{sec4}

In this section, we investigate the semidiscretization of the stochastic Maxwell equations \eqref{sto_evo} in space by  the dG method with the upwind fluxes, including the properties of the discrete Maxwell operator, the well-posedness of the spatial semidiscretization, the preservation of the divergence properties in a weak sense, and the mean-square error estimate of the  semidiscrete method in space.


\subsection{Discrete Maxwell operator} 
The notations and properties of the discrete Maxwell operator are based on \cite{HP2015}. 
Let ${\mathcal T}_h=\{K\}$ be a simplicial, shape- and contact-regular mesh
of the domain $D$ consisting of elements $K$, i.e., $D=\bigcup K$. 
The index $h$ refers to the maximum diameter of all elements of ${\mathcal T}_h$. 
The dG space with respect to the mesh ${\mathcal T}_h$
is taken to be the set of piecewise linear functions,
i.e., 
$
{\mathbb V}_h:={\mathbb P}_1({\mathcal T}_h)^6:=\{v_h\in L^2(D):~v_h|_{K}\in {\mathbb P}_1(K)\}^6,
$
where ${\mathbb P}_1(K)$ denotes the set of continuous piecewise polynomials of degree $\leq 1$. In general, ${\mathbb V}_h \not\subset {\mathcal D}(M_0)$.
The set of faces is denoted by ${\mathcal G}_h={\mathcal G}_h^{\rm int}\cup {\mathcal G}_h^{\rm ext}$, where ${\mathcal G}_h^{\rm int}$ and ${\mathcal G}_h^{\rm ext}$ consist of all interior and all exterior faces, respectively. By ${\bf n}_{F}$ we denote the unit normal of a face $F\in{\mathcal G}_h^{\rm int}$, where the orientation of ${\bf n}_{F}$ is fixed once and forever for each inner face. And for a boundary face $F\in {\mathcal G}_h^{\rm ext}$, ${\bf n}_{F}$ is an outward normal vector. The broken Sobolev spaces are defined by
$
H^k({\mathcal T}_h):=\{ v\in L^2(D):~v|_{K}\in H^k(K) \mbox{ for all } K\in {\mathcal T}_h \},~~k\in {\mathbb N},
$
with seminorm and norm being $|v|_{H^k({\mathcal T}_h)}^2:=\sum\limits_{K\in {\mathcal T}_h}|v|^2_{H^k(K)}$ and $\|v\|^2_{H^k({\mathcal T}_h)}:=\sum\limits_{j=0}^{k}|v|^2_{H^j({\mathcal T}_h)}$, respectively. Note that $H^k(D)\subset H^k({\mathcal T}_h)$.

\begin{assumption}\label{assump}
Assume that $\pi_h:~{\mathbb V}\to {\mathbb V}_h$ is the orthogonal  projection, defined by, for every $v\in{\mathbb V}$,
 \begin{equation}\label{property of projection}
 \langle v-\pi_h v,~u_h\rangle_{\mathbb V}=0\qquad \mbox{for all}~~  u_h\in {\mathbb V}_h.
 \end{equation}
 Moreover, for all $v\in H^s({\mathcal T}_h)^6$ with integer $s\leq 2$, it holds that
\begin{equation}\label{projection order}
\|v-\pi_h v\|_{\mathbb V}\leq Ch^{s} |v|_{H^s({\mathcal T}_h)^6},
\end{equation}
and 
\begin{equation}\label{projection order boundary}
\sum_{F\in {\mathcal G}_h }\|v-\pi_{h}v\|^2_{L^2(F)^6}\leq Ch^{2s-1}|v|^2_{H^s({\mathcal T}_h)^6},
\end{equation}
where the constant $C$ is independent of  $h$.
\end{assumption}

\begin{remark}
\begin{itemize}
\item[(i)] For the projection operator $\pi_h$ in Assumption \ref{assump}, it is not difficult to get that  $\|\pi_h v\|_{\mathbb V}\leq \|v\|_{\mathbb V}$.
\item[(ii)] Suppose that $\mu_{K}:=\mu|_{K}$ and $\varepsilon_K:=\varepsilon|_{K}$ are constants for each $K\in {\mathcal T}_h$, then the usual $L^2$-orthogonal projection $\pi_h$ on ${\mathbb P}_1({\mathcal T}_h)$ satisfies Assumption \ref{assump}, where the projection acts componentwise for vector fields.
\end{itemize}
\end{remark}

Define by $[[v]]_{F}:=\big(v_{K_{F}}\big)|_{F}-\big(v_{K}\big)|_{F}$ the jump of $v$ on an interior face $F$ with normal vector ${\bf n}_{F}$ pointing from $K$ to $K_{F}$. The Maxwell operator discretized by a dG method with the upwind fluxes is defined as follows.

\begin{definition}\label{def discrete Mh}
Given $u_h=({\bf E}_h^{\top},{\bf H}_h^{\top})^{\top}$,  $v_h=(\psi_h^{\top},\phi_h^{\top})^{\top}\in{\mathbb V}_h$, the discrete Maxwell operator $M_h:{\mathbb V}_h\to {\mathbb V}_h$ is given as
\begin{align*}\label{discrete Mh}
&\langle M_h u_h, v_h \rangle_{\mathbb V}:=\sum_{K}\Big( \langle \nabla\times {\bf H}_h, \psi_h\rangle_{L^2(K)^3}-\langle \nabla\times {\bf E}_h, \phi_h\rangle_{L^2(K)^3} \Big)\nonumber\\
&+\sum_{F\in {\mathcal G}_h^{\rm int}}\Big(
\langle {\bf n}_F\times [[{\bf H}_h]]_{F},\beta_K \psi_K+\beta_{K_F}\psi_{K_F} \rangle_{L^2( F)^3}\nonumber\\
&- \langle {\bf n}_F\times [[{\bf E}_h]]_{F},\alpha_K \phi_K+\alpha_{K_F}\phi_{K_F} \rangle_{L^2( F)^3} \nonumber\\
&-\gamma_{F} \langle {\bf n}_F\times [[{\bf E}_h]]_{F}, {\bf n}_F\times [[\psi_h]]_{F} \rangle_{L^2( F)^3}
-\delta_{F}\langle {\bf n}_F\times [[{\bf H}_h]]_{F}, {\bf n}_F\times [[\phi_h]]_{F} \rangle_{L^2( F)^3}\Big)\nonumber\\
&+\sum_{F\in{\mathcal G}_h^{\rm ext}} \Big(
\langle {\bf n}\times {\bf E}_h, \phi_h\rangle_{L^2( F)^3}
-2\gamma_{F} \langle {\bf n}\times {\bf E}_h, n\times\psi_h\rangle_{L^2( F)^3}
 \Big),
\end{align*}
where 
\begin{align*}
\alpha_{K}=\frac{C_{K_{F}}\varepsilon_{K_F}}{C_{K_{F}}\varepsilon_{K_F}+C_K \varepsilon_{K}},\qquad
\beta_{K}=\frac{C_{K_{F}}\mu_{K_F}}{C_{K_{F}}\mu_{K_F}+C_K \mu_{K}},\\
\gamma_F=\frac{1}{C_{K_{F}}\mu_{K_F}+C_K \mu_{K}},\qquad
\delta_{F}=\frac{1}{C_{K_{F}}\varepsilon_{K_F}+C_K \varepsilon_{K}},
\end{align*}
with $C_{K}=(\varepsilon_{K}\mu_{K})^{-1/2}$.
\end{definition}

The discrete Maxwell operator $M_h$ is also well-defined as an operator from ${\mathbb V}_h+\big({\mathcal D}(M)\cap H^1({\mathcal T}_h)^6\big)$ to ${\mathbb V}_h$, and has the following properties. Here ${\mathbb V}_h+\big({\mathcal D}(M)\cap H^1({\mathcal T}_h)^6\big):=\{v_h+u:\; v_h\in{\mathbb V}_h, \; u\in {\mathcal D}(M)\cap H^1({\mathcal T}_h)^6\}$. We refer to \cite[Lemmas 4.3-4.5]{HP2015} for proofs.
\begin{proposition}\label{properties of discrete dG}
 \begin{itemize}
\item[(i)] For $u\in {\mathcal D}(M)\cap H^1({\mathcal T}_h)^6$, we have $M_hu=\pi_h M u$.
\item[(ii)] For all $u_h=({\bf E}_h^{\top},{\bf H}_h^{\top})^{\top}\in{\mathbb V}_h$, we have
\begin{align*}
\langle M_h u_h, u_h\rangle_{\mathbb V}=&-\sum_{F\in{\mathcal G}_h^{int}}\Big(
\gamma_{F}\|{\bf n}_{F}\times[[{\bf E}_h]]_F\|^2_{L^2(F)^3}+\delta_{F}\|{\bf n}_{F}\times[[{\bf H}_h]]_F\|^2_{L^2(F)^3}
\Big)\\
&-2\sum_{F\in{\mathcal G}_h^{ext}}\gamma_{F} \|{\bf n}_{F}\times {\bf E}_h\|^2_{L^2(F)^3}\geq 0.
\end{align*}
In particular, $M_h$ is dissipative on ${\mathbb V}_h$.
\item[(iii)] For $u=({\bf E}^{\top},{\bf H}^{\top})^{\top}\in {\mathbb V}_h+\big({\mathcal D}(M)\cap H^1({\mathcal T}_h)^6\big)$ and $v_h=(\psi_h^{\top},\phi_h^{\top})^{\top}\in{\mathbb V}_h$, we have
\begin{align*}
&\langle M_h u, v_h\rangle_{\mathbb V}=\sum_{K}\Big( \langle {\bf H}, \nabla\times \psi_h\rangle_{L^2(K)^3}-\langle {\bf E}, \nabla\times \phi_h\rangle_{L^2(K)^3} \Big)\\
&+\sum_{F\in{\mathcal G}_h^{int}}\Big(
\langle \beta_K{\bf H}_{K_F}+\beta_{K_F}{\bf H}_K-\gamma_F {\bf n}_F\times [[{\bf E}]]_{F}, {\bf n}_F\times [[\psi_h]]_{F} \rangle_{L^2(F)^3}\\
&-\langle \alpha_K{\bf E}_{K_F}+\alpha_{K_F}{\bf E}_K+\delta_F {\bf n}_F\times [[{\bf H}]]_{F}, {\bf n}_F\times [[\phi_h]]_{F} \rangle_{L^2(F)^3}
\Big)\\
&+\sum_{F\in{\mathcal G}_h^{ext}} \langle {\bf H}, {\bf n}_F\times\psi_h\rangle_{L^2(F)^3}-2\gamma_{F}
\langle {\bf n}_{F}\times {\bf E}, {\bf n}_F\times\psi_h\rangle_{L^2(F)^3}.
\end{align*}
\end{itemize}
\end{proposition}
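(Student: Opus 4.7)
The plan is to derive all three parts from elementwise integration by parts together with the algebraic identities $\alpha_K+\alpha_{K_F}=1$ and $\beta_K+\beta_{K_F}=1$, which follow directly from the explicit formulas defining $M_h$.

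For part (i), it suffices to show $\langle M_h u, v_h\rangle_{\mathbb V} = \langle M u, v_h\rangle_{\mathbb V}$ for every $v_h\in{\mathbb V}_h$, since the conclusion $M_h u=\pi_h M u$ then follows from the defining property \eqref{property of projection} of the orthogonal projector $\pi_h$ and the fact that $M_h u\in{\mathbb V}_h$. Because $u=({\bf E}^\top,{\bf H}^\top)^\top\in{\mathcal D}(M)=H_0({\rm curl},D)\times H({\rm curl},D)$, the tangential traces of ${\bf E}$ and ${\bf H}$ are single-valued across every interior face, so ${\bf n}_F\times[[{\bf E}]]_F={\bf n}_F\times[[{\bf H}]]_F=0$ on each $F\in{\mathcal G}_h^{\rm int}$, and ${\bf n}\times{\bf E}=0$ on $\partial D$. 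Every face contribution in the definition of $M_h$ therefore drops out and only the volume sum remains, which is exactly $\langle M u, v_h\rangle_{\mathbb V}$.

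For part (ii), set $v_h=u_h$ and apply elementwise integration by parts to the volume contribution via the identity $\int_K [(\nabla\times{\bf H}_h)\cdot{\bf E}_h - (\nabla\times{\bf E}_h)\cdot{\bf H}_h]\, d{\bf x}=\int_{\partial K} {\bf n}_K\cdot({\bf H}_h\times{\bf E}_h)\, ds$. Regroup $\sum_K\int_{\partial K}$ as a sum over faces; on each interior face the two neighboring contributions combine through the standard jump-average identity $a_{K_F}b_{K_F}-a_Kb_K=\tfrac12[[a]]_F(b_K+b_{K_F})+\tfrac12(a_K+a_{K_F})[[b]]_F$ applied to the tangential components. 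The convex-combination conditions $\alpha_K+\alpha_{K_F}=1$ and $\beta_K+\beta_{K_F}=1$ then force the centered-flux terms in the definition of $M_h$ to cancel these integration-by-parts contributions exactly, and what remains are precisely the penalty pieces $-\gamma_F\|{\bf n}_F\times[[{\bf E}_h]]_F\|^2_{L^2(F)^3}$, $-\delta_F\|{\bf n}_F\times[[{\bf H}_h]]_F\|^2_{L^2(F)^3}$, and $-2\gamma_F\|{\bf n}_F\times{\bf E}_h\|^2_{L^2(F)^3}$. Dissipativity is then immediate because $\gamma_F,\delta_F>0$.

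For part (iii), apply the same elementwise integration by parts, but now to transfer the curl from $({\bf H},{\bf E})$ onto the discrete test functions $(\psi_h,\phi_h)$; this is legitimate even for $u\in{\mathbb V}_h+({\mathcal D}(M)\cap H^1({\mathcal T}_h)^6)$ because the tangential traces on each $K$ are well-defined in $L^2(\partial K)^3$. Regroup the resulting element-boundary terms as face sums and combine them with the flux terms already in the definition of $M_h$. Using $\alpha_K+\alpha_{K_F}=\beta_K+\beta_{K_F}=1$ once more, the parts linear in $[[{\bf E}]]_F$ and $[[{\bf H}]]_F$ reassemble into the claimed one-sided averages $\beta_K{\bf H}_{K_F}+\beta_{K_F}{\bf H}_K$ and $\alpha_K{\bf E}_{K_F}+\alpha_{K_F}{\bf E}_K$, while the $\gamma_F$- and $\delta_F$-penalty pieces pass through unchanged; the exterior-face terms are handled similarly by writing ${\bf H}$ against ${\bf n}_F\times\psi_h$ and using only the ${\bf n}\times{\bf E}$ contribution. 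The main obstacle throughout is the careful bookkeeping: the jump bracket $[[v]]_F=v_{K_F}|_F-v_K|_F$ is oriented, so signs from decomposing $\sum_K\int_{\partial K}$ into $\sum_F$ must be tracked, and the vector identity ${\bf n}_K\cdot(a\times b)=({\bf n}_K\times a)\cdot b=-({\bf n}_K\times b)\cdot a$ must be applied consistently, so that the weights $\alpha_K,\beta_K,\gamma_F,\delta_F$ land in the correct slots with the correct signs.
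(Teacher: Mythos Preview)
Your approach is correct and is precisely the standard argument. Note, however, that the paper does not actually supply a proof here: it simply states ``We refer to \cite[Lemmas 4.3--4.5]{HP2015} for proofs.'' Your sketch reproduces the essential content of those lemmas---for (i), using continuity of tangential traces across interior faces and ${\bf n}\times{\bf E}=0$ on $\partial D$ to kill all flux terms; for (ii) and (iii), elementwise integration by parts followed by face regrouping, with the convex-combination identities $\alpha_K+\alpha_{K_F}=1$, $\beta_K+\beta_{K_F}=1$ doing the cancellation. There is nothing methodologically different to compare; you have simply written out what the paper outsources.

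One small remark: the inequality sign in the statement of (ii) is a typo in the paper (the right-hand side is manifestly $\leq 0$, as required for dissipativity), and your closing sentence ``Dissipativity is then immediate because $\gamma_F,\delta_F>0$'' reads the statement correctly.
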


\subsection{Semidiscrete method in space}
After discretizing \eqref{sto_evo} by a dG method with the upwind fluxes, we end up with the spatial semidiscretization
\begin{equation}\label{dG}
\begin{cases}
{\rm d}u_h(t)=M_h u_h(t){\rm d}t-\pi_h{\rm d}W(t),\\
u_h(0)=\pi_h u_0,
\end{cases}
\end{equation}
where $M_h$ is the discrete Maxwell operator in Definition \ref{def discrete Mh}, and $u_h(t)\in {\mathbb V}_h$ is an approximation of the exact solution $u(t)\in{\mathbb V}$.

Notice that the equation \eqref{dG} actually  is a finite dimensional stochastic differential equation. In fact, let $\{\phi_1,\ldots, \phi_{N_h}\}$ be a basis for ${\mathbb V}_h$. Utilizing this basis, the  semidiscrete problem \eqref{dG} in space can be rewritten as, for $j=1,\ldots, N_h$,
\begin{equation}
\begin{cases}
{\rm d}\langle u_h(t),\phi_j \rangle_{\mathbb V}=\langle M_hu_h(t),\phi_j \rangle_{\mathbb V}{\rm d}t-\langle  \phi_j,{\rm d}W(t) \rangle_{\mathbb V},\\
\langle u_h(0), \phi_j\rangle_{\mathbb V}=\langle u_0, \phi_j\rangle_{\mathbb V}.
\end{cases}
\end{equation}
Since $u_h(t)\in L^2(\Omega;{\mathbb V}_h)$, we get
$
u_h(t)=\sum\limits_{\ell=1}^{N_h}u_{[\ell]}(t)\phi_{\ell}.
$
Denoting  $A=\Big(\langle \phi_{\ell},\phi_{j}\rangle_{\mathbb V}\Big)_{j,\ell}\in{\mathbb R}^{N_h \times N_h}$, $B=\Big(\langle M_h\phi_{\ell},\phi_{j}\rangle_{\mathbb V}\Big)_{j,\ell}\in{\mathbb R}^{N_h \times N_h}$, ${\bf u}(t)=(u_{[1]}(t),\ldots, u_{[N_h]}(t))^{\top}\in{\mathbb R}^{N_h}$, ${\bf u}_0=(\langle u_0, \phi_1\rangle_{\mathbb V},\ldots, \langle u_0, \phi_{N_h}\rangle_{\mathbb V})^{\top}\in{\mathbb R}^{N_h}$ and ${\bf W}(t)=(W_{[1]}(t),\ldots, W_{[N_h]}(t))^{\top}\in{\mathbb R}^{N_h}$ with $W_{[j]}(t)=\langle  \phi_j,W(t) \rangle_{\mathbb V}$, we obtain the system of stochastic ordinary differential equations on ${\mathbb R}^{N_h}$ for \eqref{dG},
\begin{equation}\label{SODE}
\begin{cases}
A{\rm d}{\bf u}(t)=B{\bf u}(t){\rm d}t-{\rm d}{\bf W}(t),\\
A{\bf u}(0)={\bf u}_0.
\end{cases}
\end{equation}
Notice that the components of ${\bf W}(t)$ are correlated with
\[
{\mathbb E}\big(W_{[j]}(t)W_{[\ell]}(t)\big)={\mathbb E}\left(\langle  \phi_j,W(t) \rangle_{\mathbb V}\langle  \phi_{\ell},W(t) \rangle_{\mathbb V}\right)
=t\langle  Q\phi_j,\phi_{\ell} \rangle_{\mathbb V},~~\forall~j,\ell=1,\ldots,N_h.
\]
\begin{proposition}
The spatially semidiscrete problem \eqref{dG} is well-posed, i.e., there is a unique solution $u_h\in L^2(\Omega; C([0,T];{\mathbb V}_h))$ given by
\begin{equation}\label{dG mild solution}
u_h(t)=e^{tM_h}u_h(0)-\int_0^te^{(t-s)M_h}\pi_h {\rm d}W(s).
\end{equation}
Moreover, we have
\begin{equation}\label{eq 4.9 bound}
{\mathbb E}\Big[\sup_{t\in[0,T]}\|u_h(t)\|^2_{{\mathbb V}}\Big]\leq C(1+{\mathbb E}\|u_0\|_{\mathbb V}^2),
\end{equation}
where the constant $C$ depends on $T$ and ${\rm Tr}(Q)$.
\end{proposition}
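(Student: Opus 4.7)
The plan is to treat \eqref{dG} as a finite-dimensional linear stochastic differential equation on ${\mathbb V}_h$, exploit the dissipativity of $M_h$ from Proposition \ref{properties of discrete dG}(ii), and then close the $L^2$-moment bound via an It\^o-based energy estimate.

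First, I would note that $M_h:{\mathbb V}_h\to{\mathbb V}_h$ is a bounded linear operator on the finite-dimensional Hilbert space ${\mathbb V}_h$, so it generates the uniformly continuous group $\{e^{tM_h}\}_{t\in{\mathbb R}}$. The equivalent matrix form \eqref{SODE} of \eqref{dG} is a linear SDE on ${\mathbb R}^{N_h}$ with Gaussian noise, so classical finite-dimensional SDE theory yields a unique strong (and hence mild) solution $u_h\in L^2(\Omega;C([0,T];{\mathbb V}_h))$, and a direct differentiation confirms that it coincides with the variation of constants formula \eqref{dG mild solution}. Dissipativity $\langle M_h v_h, v_h\rangle_{\mathbb V}\leq 0$ together with $\frac{d}{dt}\|e^{tM_h}v_h\|^2_{\mathbb V}=2\langle M_h e^{tM_h}v_h,e^{tM_h}v_h\rangle_{\mathbb V}\leq 0$ yields $\|e^{tM_h}\|_{{\mathcal L}({\mathbb V}_h,{\mathbb V}_h)}\leq 1$ for all $t\geq 0$.

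Next, I would apply It\^o's formula to $\|u_h(t)\|^2_{\mathbb V}$ along \eqref{dG}, which gives
\begin{align*}
\|u_h(t)\|^2_{\mathbb V}=\|u_h(0)\|^2_{\mathbb V}+2\int_0^t\langle u_h,M_h u_h\rangle_{\mathbb V}ds+\int_0^t\|\pi_h Q^{\frac12}\|^2_{HS({\mathbb V},{\mathbb V})}ds-2\int_0^t\langle u_h(s),\pi_h dW(s)\rangle_{\mathbb V}.
\end{align*}
The drift is non-positive by dissipativity, and since $\pi_h$ is an orthogonal projection, $\|\pi_h Q^{\frac12}\|^2_{HS({\mathbb V},{\mathbb V})}\leq\|Q^{\frac12}\|^2_{HS({\mathbb V},{\mathbb V})}={\rm Tr}(Q)$. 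Taking supremum over $[0,T]$ and then expectation produces
\begin{align*}
{\mathbb E}\sup_{t\in[0,T]}\|u_h(t)\|^2_{\mathbb V}\leq{\mathbb E}\|u_h(0)\|^2_{\mathbb V}+T\,{\rm Tr}(Q)+2{\mathbb E}\sup_{t\in[0,T]}\left|\int_0^t\langle u_h(s),\pi_h dW(s)\rangle_{\mathbb V}\right|.
\end{align*}

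Finally, I would bound the scalar martingale term by the Burkholder-Davis-Gundy inequality. Its quadratic variation is dominated by ${\rm Tr}(Q)\int_0^t\|u_h(s)\|^2_{\mathbb V}\,ds$ (using $\pi_h^*=\pi_h$ and $\|Q^{\frac12}\|_{{\mathcal L}({\mathbb V},{\mathbb V})}\leq\|Q^{\frac12}\|_{HS({\mathbb V},{\mathbb V})}$), so BDG and Cauchy--Schwarz give a control of the form $C\sqrt{T\,{\rm Tr}(Q)}\,\big({\mathbb E}\sup_{[0,T]}\|u_h\|^2_{\mathbb V}\big)^{1/2}$. Young's inequality then absorbs a factor $\tfrac12{\mathbb E}\sup_{[0,T]}\|u_h\|^2_{\mathbb V}$ onto the left-hand side, and combining with $\|u_h(0)\|_{\mathbb V}=\|\pi_h u_0\|_{\mathbb V}\leq\|u_0\|_{\mathbb V}$ yields \eqref{eq 4.9 bound} with $C$ depending only on $T$ and ${\rm Tr}(Q)$. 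The main technical point is the BDG/Young absorption step, which is standard but must be executed carefully so that the final constant $C$ is independent of $h$ and the dimension $N_h$.
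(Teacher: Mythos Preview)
Your argument is correct; it simply takes a different route to the supremum bound than the paper does. The paper works directly from the mild formula \eqref{dG mild solution}: it splits $u_h(t)$ into the homogeneous part $e^{tM_h}\pi_h u_0$ and the stochastic convolution, bounds the first by contractivity of $e^{tM_h}$, and then invokes a maximal inequality for stochastic convolutions to obtain
\[
{\mathbb E}\Big[\sup_{t\in[0,T]}\Big\|\int_0^t e^{(t-s)M_h}\pi_h\,{\rm d}W(s)\Big\|_{\mathbb V}^2\Big]\leq C\,T\,\|\pi_h Q^{1/2}\|_{HS({\mathbb V},{\mathbb V})}^2,
\]
which immediately gives \eqref{eq 4.9 bound}. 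Your approach instead applies It\^o's formula to $\|u_h(t)\|_{\mathbb V}^2$, drops the dissipative drift, and handles the resulting scalar martingale by BDG plus a Young absorption. The paper's route is shorter because it treats the maximal inequality as a black box; your route is more self-contained and makes the role of dissipativity in the energy estimate explicit. Both yield constants depending only on $T$ and ${\rm Tr}(Q)$, uniformly in $h$, so either is acceptable here.
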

\begin{proof}
Note that $I-M_h:~{\mathbb V}_h\to {\mathbb V}_h$ is injective and surjective, and thus ${\rm Ran}(I-M_h)={\mathbb V}_h$. Since  the discrete operator $M_h$ is dissipative on ${\mathbb V}_h$, it generates a contraction semigroup. Therefore, the unique solution of \eqref{dG} is given by \eqref{dG mild solution}.

The estimate in \eqref{eq 4.9 bound} is obtained by the triangle inequality and the estimate on stochastic convolution
\begin{align*}
&{\mathbb E}\Big[\sup_{t\in[0,T]}\|u_h(t)\|^2_{{\mathbb V}}\Big]\\
&\leq 2{\mathbb E}\Big[\sup_{t\in[0,T]}\|e^{tM_h}u_h(0)\|^2_{{\mathbb V}}\Big]+2{\mathbb E}\Big[\sup_{t\in[0,T]}\Big\|\int_0^te^{(t-s)M_h}\pi_h {\rm d}W(s)\Big\|^2_{{\mathbb V}}\Big]\\
&\leq 2{\mathbb E}\|u_h(0)\|^2_{{\mathbb V}}+2T{\mathbb E}\|\pi_{h}Q^{\frac12}\|_{HS({\mathbb V},{\mathbb V})}^2\leq C(1+{\mathbb E}\|u_0\|_{\mathbb V}^2),
\end{align*}
where in the last step we use the property $\|\pi_h u\|_{\mathbb V}\leq \|u\|_{\mathbb V}$ of the projection operator. Thus the proof is finished.
\end{proof}

It is not difficult to observe that $W_{M;h}(t)=\int_0^t e^{(t-s)M_h}\pi_h{\rm d}W(s)$ is Gaussian on ${\mathbb V}_h$ with mean $0$ and covariance operator
$$
Q_{T,h}:={\rm Cov}\big( W_{M;h}(T)\big)=\int_0^T \big(e^{rM_h}\pi_h\big)Q\big(e^{rM_h}\pi_h\big)^{*} {\rm d}r.
$$

Applying the dG method to discretize the spatial direction of the small noise system \eqref{sto_evo_para}, we denote by $\{{\mathcal L}\big( u_h^{u_0,\lambda}(T) \big)\}_{\lambda>0}$ the laws of the semidiscrete solutions.
The asymptotic behavior of $\{{\mathcal L}\big( u_h^{u_0,\lambda}(T) \big)\}_{\lambda>0}$ is similar to that of $\{{\mathcal L}\big( u^{u_0,\lambda}(T) \big)\}_{\lambda>0}$ in Proposition \ref{LDP_exact}, which is stated below.
\begin{proposition}
For arbitrary $T>0$ and $u_0\in{\mathbb V}$, the family of distributions $\left\{{\mathcal L}\big( u_h^{u_0,\lambda}(T) \big) \right\}_{\lambda>0}$ satisfies the  large deviation principle with the good rate function
\begin{equation}
I_{T,h}^{u_0}(v)=\begin{cases}
\frac12\|Q_{T,h}^{-\frac12}\big( v-e^{TM_h}\pi_hu_0 \big)\|_{\mathbb V}^2, &v-e^{TM_h}\pi_hu_0\in Q_{T,h}^{\frac12}({\mathbb V}_h),\\[0.5em]
+\infty, & {\rm otherwise},
\end{cases}
\end{equation} 
where $Q_{T,h}^{-\frac12}$ is the pseudo inverse of $Q_{T,h}^{\frac12}$.
\end{proposition}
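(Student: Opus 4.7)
The plan is to mirror the argument already used for the continuous solution in Proposition \ref{LDP_exact}, since the semidiscrete problem \eqref{dG} has exactly the same mild-solution structure as \eqref{sto_evo}, only with $S(t)$, $u_0$ replaced by $e^{tM_h}$, $\pi_h u_0$ and with the noise filtered through $\pi_h$. The key observation is that $W_{M;h}(T)=\int_0^T e^{(T-s)M_h}\pi_h\,{\rm d}W(s)$ is a centered Gaussian random variable on the finite-dimensional space ${\mathbb V}_h\subset {\mathbb V}$ with covariance $Q_{T,h}$, so that the abstract LDP for scaled Gaussians (Lemma \ref{lemma 2.1}) applies directly.

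First I would introduce the auxiliary process $Y_h^{\lambda}(t):=u_h^{u_0,\lambda}(t)-e^{tM_h}\pi_h u_0$. From the mild formula \eqref{dG mild solution} applied to \eqref{sto_evo_para}, this process satisfies $Y_h^{\lambda}(t)=\sqrt{\lambda}\,W_{M;h}(t)$, and in particular at time $T$,
\[
Y_h^{\lambda}(T)=\sqrt{\lambda}\,W_{M;h}(T),\qquad W_{M;h}(T)\sim{\mathcal N}(0,Q_{T,h})\ \text{on}\ {\mathbb V}_h.
\]
Lemma \ref{lemma 2.1} with $H={\mathbb V}$ (restricted to the closed subspace ${\mathbb V}_h$) and $\widetilde{Q}=Q_{T,h}$ then yields that $\{{\mathcal L}(Y_h^{\lambda}(T))\}_{\lambda>0}$ satisfies the LDP with the good rate function
\[
I_{T,h}^{0}(v)=\begin{cases}
\tfrac12\|Q_{T,h}^{-\frac12} v\|_{\mathbb V}^2, & v\in Q_{T,h}^{\frac12}({\mathbb V}_h),\\
+\infty,& \text{otherwise}.
\end{cases}
\]

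Next I would transfer this LDP from $Y_h^{\lambda}(T)$ to $u_h^{u_0,\lambda}(T)=Y_h^{\lambda}(T)+e^{TM_h}\pi_h u_0$ by the same translation/contraction argument as in Proposition \ref{LDP_exact}. Namely, for any closed set $A\in{\mathcal B}({\mathbb V})$ the set $A-\{e^{TM_h}\pi_h u_0\}$ is still closed, so
\begin{align*}
\limsup_{\lambda\to 0}\lambda\ln{\mathbb P}\{u_h^{u_0,\lambda}(T)\in A\}
&=\limsup_{\lambda\to 0}\lambda\ln{\mathbb P}\{Y_h^{\lambda}(T)\in A-\{e^{TM_h}\pi_h u_0\}\}\\
&\leq -\inf_{v\in A} I_{T,h}^{0}(v-e^{TM_h}\pi_h u_0)=-\inf_{v\in A} I_{T,h}^{u_0}(v),
\end{align*}
and analogously for the lower bound on open sets. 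Finally, since $I_{T,h}^{u_0}$ is just a rigid translation of the good rate function $I_{T,h}^{0}$, it inherits the lower semicontinuity and the compactness of its sublevel sets, so it is again a good rate function.

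I do not anticipate a genuine obstacle: the argument is purely a transcription of the continuous proof into the discrete setting, and all ingredients (finite-dimensionality of ${\mathbb V}_h$, Gaussianity of the stochastic convolution, the explicit form of $Q_{T,h}$) are either immediate from \eqref{dG mild solution} or already recorded above the statement. The only point worth care is verifying that $W_{M;h}(T)$ really is Gaussian on ${\mathbb V}_h$ with the claimed covariance (a short It\^o-isometry calculation) and that Lemma \ref{lemma 2.1} may be invoked on the Hilbert space ${\mathbb V}$ even though the Gaussian is degenerate and supported on the finite-dimensional subspace ${\mathbb V}_h$; this is handled by using the pseudo-inverse $Q_{T,h}^{-\frac12}$, exactly as in the statement.
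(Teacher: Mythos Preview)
Your proposal is correct and follows exactly the route the paper intends: the paper does not give a separate proof for this proposition but simply remarks that the argument is analogous to that of Proposition \ref{LDP_exact}, and your write-up is precisely that transcription (centered Gaussian $W_{M;h}(T)$ with covariance $Q_{T,h}$, Lemma \ref{lemma 2.1}, then the translation argument). There is nothing to add beyond what you already noted about the degenerate support on ${\mathbb V}_h$ being handled by the pseudo-inverse.
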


\subsection{Discrete divergence conservation property}
If $u_0\in{\mathbb V}_0$ and $Q^{\frac12}\in HS({\mathbb V},{\mathbb V}_0)$, the exact solution $u(t)$ of the stochastic Maxwell equations \eqref{sto_evo} possesses the divergence relations \eqref{sto_max3}: $\nabla\cdot(\varepsilon {\bf E}(t))=0$ and $\nabla\cdot(\varepsilon {\bf H}(t))=0$. However, for the spatial semidiscretization \eqref{dG}, we prove that  the divergence relations is preserved numerically in the following discrete weak sense.

Define the test space $X_h\subset H_0^1(D)$ as
 $X_h:=\{v\in C^0(\bar{D}): v_h|_{K}\in {\mathbb P}_2(K),~K\in{\mathcal T}_h\}\cap H_0^1(D).$ By $\langle\cdot, \cdot\rangle_{-1}$ we denote the duality product between $H^{-1}(D)$ and $H_0^1(D)$, in which
$ 
 \langle\nabla\cdot E, \psi\rangle_{-1}= -\langle E, \nabla\psi\rangle_{L^2(D)^3},~ \forall~E\in L^2(D)^3,~\psi\in H_0^1(D).
$
 \begin{proposition}\label{prop: divergence free}
 Let $u_0\in{\mathbb V}_0$ and $Q^{\frac12}\in HS({\mathbb V},{\mathbb V}_0)$. The solution $({\bf E}_h(t),{\bf H}_h(t))$ of the spatially semidiscrete problem \eqref{dG} satisfies:  $\forall ~t\in[0,T]$, and $\forall ~\phi\in X_h$, 
 \[
 \langle \nabla\cdot(\varepsilon {\bf E}_h(t)), \phi\rangle_{-1}=
  \langle \nabla\cdot(\mu {\bf H}_h(t)), \phi\rangle_{-1}=0,\qquad \qquad {\mathbb P}\mbox{-a.s.}
 \]
 \end{proposition}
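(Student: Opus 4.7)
The plan is to test the spatial semidiscrete evolution equation \eqref{dG} against the test functions $v_h=(\nabla\phi,0)^{\top}$ and $v_h=(0,\nabla\phi)^{\top}$, for $\phi\in X_h$, and to show that every resulting term vanishes, which after integration by parts delivers the two desired duality pairings equal to zero. A key preliminary observation is that for $\phi\in X_h$ the gradient $\nabla\phi$ is piecewise affine on $\mathcal{T}_h$, hence the test vectors $(\nabla\phi,0)^{\top}$ and $(0,\nabla\phi)^{\top}$ do belong to ${\mathbb V}_h=\mathbb{P}_1(\mathcal{T}_h)^6$, so we are entitled to plug them into \eqref{dG}.

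Integrating \eqref{dG} in time and pairing with $v_h=(\nabla\phi,0)^{\top}\in {\mathbb V}_h$ gives, using the symmetry of $\pi_h$,
\begin{equation*}
\langle u_h(t),v_h\rangle_{\mathbb V}=\langle u_h(0),v_h\rangle_{\mathbb V}+\int_0^t\langle M_hu_h(s),v_h\rangle_{\mathbb V}\,\mathrm{d}s-\langle W(t),v_h\rangle_{\mathbb V}.
\end{equation*}
First I would kill the initial term: $\langle \pi_h u_0,v_h\rangle_{\mathbb V}=\langle u_0,v_h\rangle_{\mathbb V}=\int_D\varepsilon{\bf E}_0\cdot\nabla\phi\,\mathrm{d}{\bf x}=-\langle\nabla\cdot(\varepsilon{\bf E}_0),\phi\rangle_{-1}=0$ because $u_0\in{\mathbb V}_0$ and $\phi\in H_0^1(D)$. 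Next the noise term: since $Q^{1/2}\in HS({\mathbb V},{\mathbb V}_0)$, one has $W(t)\in{\mathbb V}_0$ almost surely, so writing $W(t)=(\varepsilon^{-1}W_e(t),\mu^{-1}W_m(t))^{\top}$ gives $\langle W(t),v_h\rangle_{\mathbb V}=\int_D W_e(t)\cdot\nabla\phi\,\mathrm{d}{\bf x}=-\langle\nabla\cdot W_e(t),\phi\rangle_{-1}=0$ a.s.

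The real work is on the drift term. I would invoke Proposition \ref{properties of discrete dG}(iii) with $u=u_h(s)$ and $v_h=(\nabla\phi,0)^{\top}$, so that $\psi_h=\nabla\phi$ and $\phi_h=0$. The volume contribution $\sum_K\langle{\bf H}_h,\nabla\times\nabla\phi\rangle_{L^2(K)^3}$ vanishes identically because $\nabla\times\nabla\phi=0$ on every element. For the inner face terms, all the surviving contributions are weighted by ${\bf n}_F\times[[\nabla\phi]]_F$; since $\phi\in X_h\subset C^0(\bar D)$, the tangential derivatives of $\phi$ are continuous across $F$, so $[[\nabla\phi]]_F$ is colinear with ${\bf n}_F$ and this cross product is zero. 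For the boundary face terms, everything is multiplied by ${\bf n}_F\times\nabla\phi$ on $F\subset\partial D$; since $\phi\in H_0^1(D)$ vanishes on $\partial D$ and $\phi\in C^0(\bar D)$, the tangential gradient of $\phi$ vanishes on $\partial D$, so this cross product is also zero. Hence $\langle M_hu_h(s),v_h\rangle_{\mathbb V}=0$ for every $s\in[0,t]$.

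Collecting the three vanishings gives $\int_D\varepsilon{\bf E}_h(t)\cdot\nabla\phi\,\mathrm{d}{\bf x}=0$, i.e.\ $\langle\nabla\cdot(\varepsilon{\bf E}_h(t)),\phi\rangle_{-1}=0$ for all $\phi\in X_h$, and the same argument run with $v_h=(0,\nabla\phi)^{\top}$ yields the magnetic counterpart (the use of ${\bf n}\cdot(\mu{\bf H})=0$ for elements of ${\mathbb V}_0$ is automatic because the corresponding integrations by parts only produce tangential-gradient traces of $\phi\in H_0^1(D)$). The main obstacle I expect is the careful bookkeeping needed to check that \emph{every} face term in Proposition \ref{properties of discrete dG}(iii) is of the form ``(something)\,$\cdot\,{\bf n}_F\times[[\nabla\phi]]_F$'' on interior faces and ``(something)\,$\cdot\,{\bf n}_F\times\nabla\phi$'' on boundary faces; once one recognizes this structural fact, the continuity of $\phi$ and its homogeneous Dirichlet data make the terms collapse without further computation.
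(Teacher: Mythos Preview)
Your proposal is correct and follows essentially the same approach as the paper: both test the semidiscrete equation against gradients $\nabla\phi$ with $\phi\in X_h$, use the projection property to remove $\pi_h$ from the initial datum and the noise, invoke $u_0\in{\mathbb V}_0$ and $Q^{1/2}\in HS({\mathbb V},{\mathbb V}_0)$ to kill those two terms, and eliminate the drift via Proposition~\ref{properties of discrete dG}(iii) using $\nabla\times\nabla\phi={\bf 0}$, ${\bf n}_F\times[[\nabla\phi]]_F={\bf 0}$ on interior faces, and ${\bf n}\times\nabla\phi={\bf 0}$ on $\partial D$. The only cosmetic difference is that the paper tests with the pair $(\nabla\psi,\nabla\phi)^{\top}$ simultaneously and then specializes, whereas you handle the electric and magnetic components separately.
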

 \begin{proof}
 For $\psi,\phi\in X_h$, using the definition of the duality product $\langle\cdot, \cdot\rangle_{-1}$, we get
 \begin{align*}
\left\langle\begin{pmatrix}\nabla\cdot(\varepsilon{\bf E}_h(t))\\ \nabla\cdot(\mu{\bf H}_h(t))\end{pmatrix}, \begin{pmatrix} \psi \\ \phi\end{pmatrix}\right\rangle_{-1}
&=\langle \nabla\cdot(\varepsilon{\bf E}_h(t)),  \psi\rangle_{-1}
+\langle \nabla\cdot(\varepsilon{\bf H}_h(t)),  \phi\rangle_{-1}\\
&=-\langle \varepsilon{\bf E}_h(t), \nabla\psi\rangle_{L^2(D)^3}
-\langle \varepsilon{\bf H}_h(t), \nabla\phi\rangle_{L^2(D)^3}\\
&=-\left\langle\begin{pmatrix}{\bf E}_h(t)\\ {\bf H}_h(t)\end{pmatrix}, \begin{pmatrix} \nabla\psi \\ \nabla\phi\end{pmatrix}\right\rangle_{\mathbb V}.
 \end{align*}
 Using \eqref{dG} we obtain
 \begin{align*}
 \left\langle\begin{pmatrix}{\bf E}_h(t)\\ {\bf H}_h(t)\end{pmatrix}, \begin{pmatrix} \nabla\psi \\ \nabla\phi\end{pmatrix}\right\rangle_{\mathbb V}
 =&\left\langle\begin{pmatrix}{\bf E}_h(0)\\ {\bf H}_h(0)\end{pmatrix}, \begin{pmatrix} \nabla\psi \\ \nabla\phi\end{pmatrix}\right\rangle_{\mathbb V}
 +\int_0^t \left\langle M_h\begin{pmatrix}{\bf E}_h(s)\\ {\bf H}_h(s)\end{pmatrix}, \begin{pmatrix} \nabla\psi \\ \nabla\phi\end{pmatrix}\right\rangle_{\mathbb V}{\rm d}s\\
& -\left\langle\pi_h\begin{pmatrix} \varepsilon^{-1}W_e(t)\\  \mu^{-1}W_m(t)\end{pmatrix}, \begin{pmatrix} \nabla\psi \\ \nabla\phi\end{pmatrix}\right\rangle_{\mathbb V}.
 \end{align*}
 For the first and third terms on the right-hand side, we utilize the property  \eqref{property of projection} of projection and the fact that  $ \begin{pmatrix} \nabla\psi \\ \nabla\phi\end{pmatrix}\in {\mathbb V}_h$ to get
 \[
 \left\langle\begin{pmatrix}{\bf E}_h(0)\\ {\bf H}_h(0)\end{pmatrix}, \begin{pmatrix} \nabla\psi \\ \nabla\phi\end{pmatrix}\right\rangle_{\mathbb V}
 =\left\langle \pi_h\begin{pmatrix}{\bf E}(0)\\ {\bf H}(0)\end{pmatrix}, \begin{pmatrix} \nabla\psi \\ \nabla\phi\end{pmatrix}\right\rangle_{\mathbb V}
 =\left\langle \begin{pmatrix}{\bf E}(0)\\ {\bf H}(0)\end{pmatrix}, \begin{pmatrix} \nabla\psi \\ \nabla\phi\end{pmatrix}\right\rangle_{\mathbb V}=0,
 \]
 and
 \begin{align*}
 \left\langle\pi_h\begin{pmatrix} \varepsilon^{-1}W_e(t)\\  \mu^{-1}W_m(t)\end{pmatrix}, \begin{pmatrix} \nabla\psi \\ \nabla\phi\end{pmatrix}\right\rangle_{\mathbb V}
& = \left\langle\begin{pmatrix} \varepsilon^{-1}W_e(t)\\ \mu^{-1}W_m(t)\end{pmatrix}, \begin{pmatrix} \nabla\psi \\ \nabla\phi\end{pmatrix}\right\rangle_{\mathbb V}\\
& =\langle W_e(t), \nabla\psi\rangle_{L^2(D)^3}+\langle W_m(t), \nabla\phi\rangle_{L^2(D)^3}\\
& =-\langle \nabla\cdot W_e(t), \psi\rangle_{-1}-\langle \nabla\cdot W_m(t),\phi\rangle_{-1}=0.
  \end{align*}
 Using Proposition \ref{properties of discrete dG} (iii),  the second term on the right-hand side equals to zero, since for any function $\varphi \in X_h$, we have $\nabla\times\nabla \varphi={\bf 0}$, ${\bf n}_{F}\times [[\nabla \varphi]]_{F}={\bf 0}$ for $F\in {\mathcal G}_h^{\rm int}$ and ${\bf n}\times \nabla\varphi={\bf 0}$ on $\partial D$. 
   Therefore, the conclusion of this proposition comes from taking  $\phi=0$ or $\psi=0$, respectively.
 \end{proof}
 
 \begin{remark}\label{remark1}
 The projection of the exact solution of \eqref{sto_evo} has the same property, $\forall ~t\in[0,T]$, and $\forall ~\phi\in X_h$, 
$$
 \langle \nabla\cdot\pi_h(\varepsilon {\bf E}(t)), \phi\rangle_{-1}=
  \langle \nabla\cdot\pi_h(\mu {\bf H}(t)), \phi\rangle_{-1}=0,\qquad ~ {\mathbb P}\mbox{-a.s.}
$$
 In fact, since $\nabla\phi\in {\mathbb V}_h$, we have
$
 \langle \nabla\cdot\pi_h(\varepsilon {\bf E}(t)), \phi\rangle_{-1}
 =\langle \pi_h(\varepsilon {\bf E}(t)), \nabla\phi\rangle_{L^2(D)^3}
 =\langle \varepsilon {\bf E}(t), \nabla\phi\rangle_{L^2(D)^3}
 =\langle \nabla\cdot(\varepsilon {\bf E}(t)), \phi\rangle_{-1}=0.
$
 
 \end{remark}
 
\subsection{Error estimate of spatial semidiscretization}
To investigate the error of the spatial semidiscretization \eqref{dG}, we apply the projection $\pi_h$ to the continuous problem \eqref{sto_evo} and use  Proposition \ref{properties of discrete dG} (i) to get
\begin{equation}\label{projection sto_evo}
{\rm d}\pi_h u(t)=M_h u(t){\rm d}t-\pi_h {\rm d}W(t),\qquad \pi_h u(0)=\pi_h u_0.
\end{equation}
We define the error $e(t)=u_h(t)-u(t)=\big(u_h(t)-\pi_h u(t)\big)-\big(u(t)-\pi_h u(t)\big)=:e_h(t)-e_{\pi}(t)$.

The mean-square error estimate of the spatial semidiscretization \eqref{dG} is given in the following theorem. 
\begin{theorem}\label{thm: error of eh}
Let  $u\in C([0,T];L^2(\Omega;  H^{k}(D)^6))$ with $k\in\{1,2\}$ be the solution of \eqref{sto_evo} and let $u_h\in C([0,T];L^2(\Omega; {\mathbb V}_h))$ be the solution of \eqref{dG}. Then there is a constant $C$ independent of $h$ such that
$
\sup_{t\in[0,T]}\Big({\mathbb E}\|u_h(t)-u(t)\|^2_{\mathbb V}\Big)^{\frac12}\leq Ch^{k-\frac12} ~\mbox{for }~ k\in\{1,2\}.
$
\end{theorem}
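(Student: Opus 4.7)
The plan is to use the standard projection-based error splitting, exploiting the observation that the stochastic forcing cancels in the appropriate difference. Set
$$e_h(t):=u_h(t)-\pi_h u(t),\qquad e_\pi(t):=u(t)-\pi_h u(t),$$
so $u_h(t)-u(t)=e_h(t)-e_\pi(t)$. The projection part is handled immediately by Assumption \ref{assump}: estimate \eqref{projection order} gives $\|e_\pi(t)\|_{\mathbb V}\le Ch^{k}|u(t)|_{H^{k}(D)^{6}}$, and the regularity hypothesis $u\in C([0,T];L^{2}(\Omega;H^{k}(D)^{6}))$ then yields an $O(h^{k})$ bound in the $L^{2}(\Omega;{\mathbb V})$-sense, which is stronger than required. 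The whole task thus reduces to controlling $e_h$.

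Subtracting the projected continuous equation \eqref{projection sto_evo} from the semidiscrete equation \eqref{dG}, both noise increments equal $-\pi_h{\rm d}W$ and cancel, so $e_h$ satisfies the \emph{pathwise} random ODE
$$\frac{{\rm d}e_h(t)}{{\rm d}t}=M_h e_h(t)-M_h e_\pi(t),\qquad e_h(0)=0,\qquad {\mathbb P}\text{-a.s.},$$
which is meaningful because $u(t)\in H^{k}(D)^{6}\subset H^{1}({\mathcal T}_h)^{6}\cap{\mathcal D}(M)$, so $M_h$ extends to $e_\pi(t)$. Testing with $e_h(t)$ in the ${\mathbb V}$-inner product and using the dissipativity of $M_h$ from Proposition \ref{properties of discrete dG}(ii) yields
$$\frac{1}{2}\frac{{\rm d}}{{\rm d}t}\|e_h(t)\|_{\mathbb V}^{2}+\Theta(e_h(t))=-\langle M_h e_\pi(t),e_h(t)\rangle_{\mathbb V},$$
where $\Theta(e_h)\ge 0$ collects all the upwind jump-norm terms produced by $-\langle M_h e_h,e_h\rangle_{\mathbb V}$.

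The main obstacle is to bound $\langle M_h e_\pi,e_h\rangle_{\mathbb V}$ with the sharp exponent $h^{k-1/2}$. For this I apply the integration-by-parts formula Proposition \ref{properties of discrete dG}(iii) with $u=e_\pi$ and $v_h=e_h$. The volume contributions involve $\nabla\times\psi_h$ and $\nabla\times\phi_h$, which are piecewise constant and hence lie in ${\mathbb V}_h$; by the orthogonality \eqref{property of projection} of $\pi_h$ (together with the piecewise-constant structure of $\varepsilon,\mu$ assumed in the remark following Assumption \ref{assump}), every volume integral vanishes. Only face terms survive. Using $u(t)\in H^{1}(D)^{6}$ so that $[[u(t)]]_F=0$, the interior jumps of $e_\pi$ reduce to jumps of $\pi_h u$. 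Each surviving face term is estimated by Cauchy--Schwarz and the boundary projection bound \eqref{projection order boundary}:
$$\sum_{F\in{\mathcal G}_h}\|e_\pi(t)\|_{L^{2}(F)^{6}}^{2}\le Ch^{2k-1}|u(t)|_{H^{k}({\mathcal T}_h)^{6}}^{2}.$$
The jump factors of $e_h$ that appear come naturally paired with the coefficients $\gamma_F^{1/2}$, $\delta_F^{1/2}$, so Young's inequality absorbs them into $\Theta(e_h)$, leaving a free term bounded by $Ch^{2k-1}|u(t)|_{H^{k}(D)^{6}}^{2}$.

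Combining these observations gives, pathwise,
$$\frac{{\rm d}}{{\rm d}t}\|e_h(t)\|_{\mathbb V}^{2}\le C\,h^{2k-1}|u(t)|_{H^{k}(D)^{6}}^{2},$$
and integrating from $0$ to $t$, taking expectation, and using $u\in C([0,T];L^{2}(\Omega;H^{k}(D)^{6}))$ produces $\sup_{t\in[0,T]}({\mathbb E}\|e_h(t)\|_{\mathbb V}^{2})^{1/2}\le Ch^{k-1/2}$. Adding the earlier bound on $e_\pi$ via the triangle inequality finishes the proof. The crucial conceptual step is the cancellation of the noise in the error equation, which reduces the stochastic problem to a deterministic upwind-dG analysis along each sample path; the $h^{-1/2}$ loss is inherent to the upwind-flux estimate \eqref{projection order boundary} and is consistent with the deterministic theory in \cite{HP2015}.
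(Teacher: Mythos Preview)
Your proof is correct and follows essentially the same route as the paper: the same projection splitting $e_h-e_\pi$, the same cancellation of the noise to obtain a pathwise ODE for $e_h$, and the same use of Proposition~\ref{properties of discrete dG}(iii) together with the projection orthogonality to kill the volume terms and absorb the face terms into the upwind dissipation via Young's inequality. Your explicit mention of the piecewise-constant assumption on $\varepsilon,\mu$ to justify the vanishing of the volume integrals is a point the paper leaves implicit.
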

\begin{proof}
For the part $e_{\pi}(t)$, by using \eqref{projection order}, we have
\begin{align}\label{eq 4.14}
{\mathbb E}\|e_{\pi}(t)\|^2_{\mathbb V}={\mathbb E}\|u(t)-\pi_h u(t)\|^2_{\mathbb V}
\leq Ch^{2k}{\mathbb E}|u(t)|^2_{H^{k}(D)^6}.
\end{align}
For the part $e_{h}(t)$, we subtract \eqref{projection sto_evo} from \eqref{dG} to get
$
{\rm d}e_h(t)=M_h e_h(t){\rm d}t-M_h e_{\pi}(t){\rm d}t,~e_h(0)=0.
$
Then we obtain, for any $t\in[0,T]$,
$$
\frac12 \|e_h(t)\|^2_{\mathbb V} -\int_0^t \langle M_he_h(s), e_h(s)
\rangle_{\mathbb V}{\rm d}s =-\int_0^t\langle M_he_{\pi}(s), e_h(s)
\rangle_{\mathbb V}{\rm d}s.
$$

For the term on the right-hand side, noticing $e_{h}(s)\in{\mathbb V}_h$, $e_{\pi}(s)\in {\mathbb V}_h+({\mathcal D}(M)\cap H^1(D)^6 )$, we use  Proposition \ref{properties of discrete dG} (iii) to obtain
\begin{align*}
&|\langle M_h e_{\pi}, e_h\rangle_{\mathbb V}|=\sum_{K}\Big( \langle e_{\pi}^{\bf H}, \nabla\times e_h^{\bf E}\rangle_{L^2(K)^3}-\langle e_{\pi}^{\bf E}, \nabla\times e_h^{\bf H}\rangle_{L^2(K)^3} \Big)\\
&+\sum_{F\in{\mathcal G}_h^{int}}\Big(
\langle \beta_Ke_{\pi,K_F}^{\bf H}+\beta_{K_F}e^{\bf H}_{\pi,K}-\gamma_F {\bf n}_F\times [[e^{\bf E}_{\pi}]]_{F}, {\bf n}_F\times [[e_h^{\bf E}]]_{F} \rangle_{L^2(F)^3}\\
&-\langle \alpha_K e^{\bf E}_{\pi,K_F}+\alpha_{K_F}e^{\bf E}_{\pi,K}+\delta_F {\bf n}_F\times [[e_{\pi}^{\bf H}]]_{F}, {\bf n}_F\times [[e_h^{\bf H}]]_{F} \rangle_{L^2(F)^3}
\Big)\\
&+\sum_{F\in{\mathcal G}_h^{ext}} \langle e_{\pi}^{\bf H}, {\bf n}_F\times e_h^{\bf E}\rangle_{L^2(F)^3}-2\gamma_{F}
\langle {\bf n}_{F}\times e_{\pi}^{\bf E}, {\bf n}_F\times e_h^{\bf E}\rangle_{L^2(F)^3},
\end{align*}
where $e_{\pi}=\left((e_{\pi}^{\bf E})^{\top},\; (e_{\pi}^{\bf H})^{\top}\right)^{\top}$ and $e_{h}=\left((e_{h}^{\bf E})^{\top},\; (e_{h}^{\bf H})^{\top}\right)^{\top}$.
The property of the projection $\pi_h$ leads to $ \langle e_{\pi}^{\bf H}, \nabla\times e_h^{\bf E}\rangle_{L^2(K)^3}=\langle e_{\pi}^{\bf E}, \nabla\times e_h^{\bf H}\rangle_{L^2(K)^3}=0$.
Then using Cauchy-Schwarz and Young's inequalities, we have
\begin{align}\label{eq 4.15}
&\langle M_h e_{\pi}(s), e_h(s)\rangle_{\mathbb V}
\leq \sum_{F\in{\mathcal G}_h^{ext}}\gamma_{F} \|{\bf n}_{F}\times e^{\bf E}_h\|^2_{L^2(F)^3}
\nonumber\\
& +
\sum_{F\in{\mathcal G}_h^{int}}\Big(\frac
{\gamma_{F}}{2}\|{\bf n}_{F}\times[[e^{\bf E}_h]]_F\|^2_{L^2(F)^3}+\frac{\delta_{F}}{2}\|{\bf n}_{F}\times[[e^{\bf H}_h]]_F\|^2_{L^2(F)^3}\Big) 
\nonumber\\
&+\sum_{F\in{\mathcal G}_h^{int}}\Big(\frac{1}{2\gamma_{F}}\|\beta_Ke_{\pi,K_F}^{\bf H}+\beta_{K_F}e^{\bf H}_{\pi,K}-\gamma_F {\bf n}_F\times [[e^{\bf E}_{\pi}]]_{F}\|_{L^2(F)^3}^2\nonumber\\
&\qquad\qquad+\frac{1}{2\delta_{F}}\|\alpha_K e^{\bf E}_{\pi,K_F}+\alpha_{K_F}e^{\bf E}_{\pi,K}+\delta_F {\bf n}_F\times [[e_{\pi}^{\bf H}]]_{F}\|_{L^2(F)^3}^2
\Big)\\
&+\sum_{F\in{\mathcal G}_h^{ext}}\Big( \frac{1}{2\gamma_F}\|e_{\pi}^{\bf H}\|^2_{L^2(F)^3} + 2\gamma_{F}\|{\bf n}_{F}\times e_{\pi}^{\bf E}\|^2_{L^2(F)^3} \Big)\nonumber\\
\leq &-\frac12  \langle M_he_h(s), e_h(s)
\rangle_{\mathbb V} +Ch^{2k-1}|u(s)|_{H^{k}(D)^6}^{2},\nonumber
\end{align}
where in the last step, we use the equality in (ii) of Proposition \ref{properties of discrete dG} and the inequality \eqref{projection order boundary}.
Hence, we have
$$
\frac12 \|e_h(t)\|^2_{\mathbb V} -\frac12\int_0^t \langle M_he_h(s), e_h(s)
\rangle_{\mathbb V}{\rm d}s \leq Ch^{2k-1}\int_0^t|u(s)|_{H^k(D)^6}^2{\rm d}s.
$$
 Proposition \ref{properties of discrete dG} (ii) yields that the second term on the left-hand side is nonnegative.
Then taking expectation and using Lemmas \ref{exact_H1} and \ref{exact_H2}, we get $\sup_{t\in[0,T]}{\mathbb E}\|e_h(t)\|^2_{\mathbb V}\leq Ch^{2k-1}\int_0^T{\mathbb E}|u(s)|_{H^k(D)^6}^2{\rm d}s,$
which combines with \eqref{eq 4.14} completes the proof.
\end{proof}

\section{Full discretization of stochastic Maxwell equations}\label{sec:5}
In this section, we consider the full discretization of stochastic Maxwell equations \eqref{sto_evo} by applying the midpoint scheme in time and the dG method with the upwind fluxes in space:
\begin{equation}\label{full discretization}
u_h^{n+1}=u_h^{n}+\frac{\tau}{2}\left( M_h u_h^{n}+M_h u_h^{n+1} \right)-\pi_h\Delta W^{n+1},
\end{equation}
with $u_h^0=\pi_h u_0$. Utilizing the basis of ${\mathbb V}_h$ in Section \ref{sec4}, the fully discrete method \eqref{full discretization} can be rewritten as the midpoint scheme for \eqref{SODE},
\[
A{\bf u}^{n+1}=A{\bf u}^{n}+\frac{\tau}{2}\big(B{\bf u}^{n}+B{\bf u}^{n+1}\big)-\Delta {\bf W}^{n+1}.
\]

Following the proof of Proposition \ref{prop: divergence free}, the divergence conservation property \eqref{sto_max3} is preserved numerically by the solution of \eqref{full discretization}  in a weak sense.
 \begin{proposition}\label{prop: divergence free full}
 Let $u_0\in{\mathbb V}_0$ and $Q^{\frac12}\in HS({\mathbb V},{\mathbb V}_0)$. The solution $\left\{u_h^n\right\}_{0\leq n\leq N}$ of the fully discrete method \eqref{full discretization} satisfies:  $\forall ~n\in\{0,1,\ldots,N\}$, and $\forall ~\phi\in X_h$, 
 \[
 \langle \nabla\cdot(\varepsilon {\bf E}_h^n), \phi\rangle_{-1}=
  \langle \nabla\cdot(\mu {\bf H}_h^n), \phi\rangle_{-1}=0,\qquad \qquad {\mathbb P}\mbox{-a.s.}
 \]
 \end{proposition}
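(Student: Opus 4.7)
The plan is to mimic the proof of Proposition \ref{prop: divergence free} but replace the integral formulation by an induction on $n$, since the fully discrete scheme produces iterates rather than a continuous trajectory. The key observation is the same: for any $\psi,\phi\in X_h$, the pair $(\nabla\psi^\top,\nabla\phi^\top)^\top$ lies in ${\mathbb V}_h$ (its components are piecewise linear), so it is a legitimate test vector against which we can pair the scheme \eqref{full discretization} using $\langle\cdot,\cdot\rangle_{\mathbb V}$, and we can invoke the $L^2$-orthogonality \eqref{property of projection}.

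First I would handle the base case $n=0$. Since $u_h^0=\pi_h u_0$ with $u_0\in{\mathbb V}_0$, the projection property gives
\[
\Big\langle u_h^0, \begin{pmatrix}\nabla\psi\\ \nabla\phi\end{pmatrix}\Big\rangle_{\mathbb V}=\Big\langle u_0, \begin{pmatrix}\nabla\psi\\ \nabla\phi\end{pmatrix}\Big\rangle_{\mathbb V}=-\langle\nabla\cdot(\varepsilon{\bf E}_0),\psi\rangle_{-1}-\langle\nabla\cdot(\mu{\bf H}_0),\phi\rangle_{-1}=0,
\]
which is equivalent to the discrete divergence conditions at $n=0$.

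For the inductive step, assume the conclusion holds for $n$ and test \eqref{full discretization} against $(\nabla\psi^\top,\nabla\phi^\top)^\top$. I will dispatch the three contributions on the right-hand side in turn. The two Maxwell terms $\langle M_h u_h^n,(\nabla\psi^\top,\nabla\phi^\top)^\top\rangle_{\mathbb V}$ and $\langle M_h u_h^{n+1},(\nabla\psi^\top,\nabla\phi^\top)^\top\rangle_{\mathbb V}$ vanish by Proposition \ref{properties of discrete dG}(iii): the volume integrals involve $\nabla\times\nabla\psi$ and $\nabla\times\nabla\phi$, the interior face integrals involve $[[\nabla\psi]]_F$ and $[[\nabla\phi]]_F$ (which are zero by continuity of $\psi,\phi\in X_h\subset C^0(\bar D)$), and the exterior face integrals involve ${\bf n}_F\times\nabla\psi$ on $\partial D$, which vanishes because $\psi\in H^1_0(D)$ forces $\psi|_{\partial D}=0$ and therefore $\nabla\psi$ is normal to $\partial D$. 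The noise term becomes $\langle\pi_h\Delta W^{n+1},(\nabla\psi^\top,\nabla\phi^\top)^\top\rangle_{\mathbb V}=\langle\Delta W^{n+1},(\nabla\psi^\top,\nabla\phi^\top)^\top\rangle_{\mathbb V}$ by \eqref{property of projection}, and unwinding the definition of the $\mathbb V$-inner product this reduces to $-\langle\nabla\cdot\Delta W_e^{n+1},\psi\rangle_{-1}-\langle\nabla\cdot\Delta W_m^{n+1},\phi\rangle_{-1}$, which is $0$ ${\mathbb P}$-a.s.\ because $Q^{\frac12}\in HS({\mathbb V},{\mathbb V}_0)$ guarantees that the increments of $W_e$ and $W_m$ are divergence-free.

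Combining these three observations gives
\[
\Big\langle u_h^{n+1}, \begin{pmatrix}\nabla\psi\\ \nabla\phi\end{pmatrix}\Big\rangle_{\mathbb V}=\Big\langle u_h^{n}, \begin{pmatrix}\nabla\psi\\ \nabla\phi\end{pmatrix}\Big\rangle_{\mathbb V},\qquad {\mathbb P}\mbox{-a.s.}
\]
By the inductive hypothesis the right-hand side vanishes, so does the left-hand side, and finally setting $\phi=0$ and $\psi=0$ respectively separates the two divergence relations. I do not anticipate a major obstacle; the only mildly delicate point is justifying the vanishing of the boundary contribution from $M_h$ (the combination $\psi\in H^1_0(D)\Rightarrow{\bf n}\times\nabla\psi=0$ on $\partial D$), and verifying that the hypothesis $Q^{\frac12}\in HS({\mathbb V},{\mathbb V}_0)$ indeed transfers the divergence-free condition to the Wiener increments $\Delta W_e^{n+1}$ and $\Delta W_m^{n+1}$, which is implicit already in the proof of Proposition \ref{prop: divergence free}.
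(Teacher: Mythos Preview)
Your proof is correct and follows essentially the same path as the paper's: test the scheme against $(\nabla\psi^\top,\nabla\phi^\top)^\top$, use Proposition \ref{properties of discrete dG}(iii) to kill the $M_h$ contribution, use \eqref{property of projection} together with $Q^{1/2}\in HS({\mathbb V},{\mathbb V}_0)$ to kill the noise contribution, and iterate (the paper iterates back to $n=0$, you induct forward --- equivalent). One minor imprecision: on interior faces what vanishes is ${\bf n}_F\times[[\nabla\psi]]_F$, not $[[\nabla\psi]]_F$ itself (the normal derivative of a function in $X_h$ can jump across faces), but this is precisely what the formula in Proposition \ref{properties of discrete dG}(iii) needs, so the argument goes through.
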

 \begin{proof}
 For $\psi,\phi\in X_h$, using the definition of the inner product $\langle\cdot, \cdot\rangle_{-1}$, we get
 \begin{align*}
\left\langle\begin{pmatrix}\nabla\cdot(\varepsilon{\bf E}_h^{n+1})\\ \nabla\cdot(\mu{\bf H}_h^{n+1})\end{pmatrix}, \begin{pmatrix} \psi \\ \phi\end{pmatrix}\right\rangle_{-1}
&=\langle \nabla\cdot(\varepsilon{\bf E}_h^{n+1}),  \psi\rangle_{-1}
+\langle \nabla\cdot(\varepsilon{\bf H}_h^{n+1}),  \phi\rangle_{-1}\\
&=-\left\langle\begin{pmatrix}{\bf E}_h^{n+1}\\ {\bf H}_h^{n+1}\end{pmatrix}, \begin{pmatrix} \nabla\psi \\ \nabla\phi\end{pmatrix}\right\rangle_{\mathbb V}.
 \end{align*}
 Using \eqref{full discretization} we obtain
 \begin{align*}
 \left\langle\begin{pmatrix}{\bf E}_h^{n+1}\\ {\bf H}_h^{n+1}\end{pmatrix}, \begin{pmatrix} \nabla\psi \\ \nabla\phi\end{pmatrix}\right\rangle_{\mathbb V}
 =&\left\langle\begin{pmatrix}{\bf E}_h^n\\ {\bf H}_h^n\end{pmatrix}, \begin{pmatrix} \nabla\psi \\ \nabla\phi\end{pmatrix}\right\rangle_{\mathbb V}
 +\frac{\tau}{2} \left\langle M_h\begin{pmatrix}{\bf E}_h^n+{\bf E}_h^{n+1}\\ {\bf H}_h^n+{\bf H}_h^{n+1}\end{pmatrix}, \begin{pmatrix} \nabla\psi \\ \nabla\phi\end{pmatrix}\right\rangle_{\mathbb V}\\
& -\left\langle\pi_h\begin{pmatrix} \varepsilon^{-1}\Delta W_e^{n+1}\\  \mu^{-1}\Delta W_m^{n+1}\end{pmatrix}, \begin{pmatrix} \nabla\psi \\ \nabla\phi\end{pmatrix}\right\rangle_{\mathbb V}.
 \end{align*}
 Using Proposition \ref{properties of discrete dG} (iii),  the second term on the right-hand side equals to zero, since for any function $\varphi \in X_h$, we have $\nabla\times\nabla \varphi={\bf 0}$, ${\bf n}_{F}\times [[\nabla \varphi]]_{F}={\bf 0}$ for $F\in {\mathcal G}_h^{\rm int}$ and ${\bf n}\times \nabla\varphi={\bf 0}$ on $\partial D$. 
 For the third term  on the right-hand side, the property  of the projection \eqref{property of projection}, and the fact that $ \begin{pmatrix} \nabla\psi \\ \nabla\phi\end{pmatrix}\in {\mathbb V}_h$  yield
 \begin{align*}
 \left\langle\pi_h\begin{pmatrix} \varepsilon^{-1}\Delta W_e^{n+1}\\  \mu^{-1}\Delta W_m^{n+1}\end{pmatrix}, \begin{pmatrix} \nabla\psi \\ \nabla\phi\end{pmatrix}\right\rangle_{\mathbb V}
& = \left\langle\begin{pmatrix} \varepsilon^{-1}\Delta W_e^{n+1}\\ \mu^{-1}\Delta W_m^{n+1}\end{pmatrix}, \begin{pmatrix} \nabla\psi \\ \nabla\phi\end{pmatrix}\right\rangle_{\mathbb V}\\
& =-\langle \nabla\cdot (\Delta W_e^{n+1}), \psi\rangle_{-1}-\langle \nabla\cdot (\Delta W_m^{n+1}),\phi\rangle_{-1}=0.
  \end{align*}
   Thus, 
   \[
   \left\langle\begin{pmatrix}{\bf E}_h^{n+1}\\ {\bf H}_h^{n+1}\end{pmatrix}, \begin{pmatrix} \nabla\psi \\ \nabla\phi\end{pmatrix}\right\rangle_{\mathbb V}
 =\left\langle\begin{pmatrix}{\bf E}_h^n\\ {\bf H}_h^n\end{pmatrix}, \begin{pmatrix} \nabla\psi \\ \nabla\phi\end{pmatrix}\right\rangle_{\mathbb V} =\cdots =\left\langle\begin{pmatrix}{\bf E}_h^0\\ {\bf H}_h^0\end{pmatrix}, \begin{pmatrix} \nabla\psi \\ \nabla\phi\end{pmatrix}\right\rangle_{\mathbb V}=0,
   \]
   where in the last step, we use
    \[
 \left\langle\begin{pmatrix}{\bf E}_h^0\\ {\bf H}_h^0\end{pmatrix}, \begin{pmatrix} \nabla\psi \\ \nabla\phi\end{pmatrix}\right\rangle_{\mathbb V}
 =\left\langle \pi_h\begin{pmatrix}{\bf E}_0\\ {\bf H}_0\end{pmatrix}, \begin{pmatrix} \nabla\psi \\ \nabla\phi\end{pmatrix}\right\rangle_{\mathbb V}
 =\left\langle \begin{pmatrix}{\bf E}_0\\ {\bf H}_0\end{pmatrix}, \begin{pmatrix} \nabla\psi \\ \nabla\phi\end{pmatrix}\right\rangle_{\mathbb V}=0.
 \]
 Therefore  the conclusion of this proposition comes from taking  $\phi=0$ or $\psi=0$, respectively.
 \end{proof}
 
The mild version of the full discretization  \eqref{full discretization} can be  rewritten as
\begin{equation}\label{full discretization mild}
u_n^{n+1}=S_{h,\tau} u_h^n- T_{h,\tau} \pi_h\Delta W^{n+1},
\end{equation}
where $T_{h,\tau}=\big(I-\frac{\tau}{2}M_h\big)^{-1}$ and $S_{h,\tau}=\big(I-\frac{\tau}{2}M_h\big)^{-1}\big(I+\frac{\tau}{2}M_h\big)$.

\begin{lemma}\label{lemma 5.1}
For operators $T_{h,\tau}$ and $S_{h,\tau}$  on ${\mathbb V}_h$, the following estimates hold:
\begin{itemize}
\item[(i)] $\|T_{h,\tau}\|_{{\mathcal L}({\mathbb V}_h,{\mathbb V}_h)}\leq 1$.
\item[(ii)] $\|S^n_{h,\tau}\|_{{\mathcal L}({\mathbb V}_h,{\mathbb V}_h)}\leq 1$ for any $0\leq n\leq N$.
\end{itemize}
\end{lemma}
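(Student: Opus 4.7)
The plan is to deduce both bounds from the dissipativity of $M_h$ on ${\mathbb V}_h$, namely the identity $\langle M_hv_h,v_h\rangle_{\mathbb V}\le 0$ established in Proposition~\ref{properties of discrete dG}(ii). This is precisely the property that made the continuous operator $M$ yield the bounds in Lemma~\ref{lemma Ttau}; the argument here mirrors that one verbatim on the finite-dimensional space ${\mathbb V}_h$, so no new ingredient is required.

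For (i), I first record that $I-\tfrac{\tau}{2}M_h:{\mathbb V}_h\to{\mathbb V}_h$ is invertible: if $v-\tfrac{\tau}{2}M_hv=0$ for some $v\in{\mathbb V}_h$, pairing with $v$ and using dissipativity gives $\|v\|_{\mathbb V}^2=\tfrac{\tau}{2}\langle M_hv,v\rangle_{\mathbb V}\le 0$, so $v=0$, and finite-dimensionality promotes injectivity to bijectivity. Now given $v\in{\mathbb V}_h$, set $\widetilde v=T_{h,\tau}v$, so $\widetilde v-\tfrac{\tau}{2}M_h\widetilde v=v$. Testing against $\widetilde v$ in the ${\mathbb V}$-inner product yields
\[
\|\widetilde v\|_{\mathbb V}^{2}-\tfrac{\tau}{2}\langle M_h\widetilde v,\widetilde v\rangle_{\mathbb V}
=\langle v,\widetilde v\rangle_{\mathbb V}\le\|v\|_{\mathbb V}\|\widetilde v\|_{\mathbb V}.
\]
Because $-\tfrac{\tau}{2}\langle M_h\widetilde v,\widetilde v\rangle_{\mathbb V}\ge 0$, the inequality collapses to $\|\widetilde v\|_{\mathbb V}\le\|v\|_{\mathbb V}$, which is (i).

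For (ii), by submultiplicativity of the operator norm it suffices to show $\|S_{h,\tau}\|_{{\mathcal L}({\mathbb V}_h,{\mathbb V}_h)}\le 1$; the bound for powers then follows by iteration. Given $v\in{\mathbb V}_h$, put $w=S_{h,\tau}v$; unwinding the definition gives the Cayley-type identity $w-v=\tfrac{\tau}{2}M_h(v+w)$. Pairing with $w+v$ in ${\mathbb V}$ and using dissipativity once more,
\[
\|w\|_{\mathbb V}^{2}-\|v\|_{\mathbb V}^{2}
=\langle w-v,w+v\rangle_{\mathbb V}
=\tfrac{\tau}{2}\langle M_h(v+w),v+w\rangle_{\mathbb V}\le 0,
\]
which delivers the contraction.

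I do not foresee any real obstacle: both conclusions are the classical B-stability reasoning for the midpoint rule transplanted to the finite-dimensional subspace ${\mathbb V}_h$, and everything is driven by the single algebraic inequality from Proposition~\ref{properties of discrete dG}(ii). The only mild subtlety worth flagging is the invertibility of $I-\tfrac{\tau}{2}M_h$ on ${\mathbb V}_h$, which is however a one-line consequence of dissipativity on a finite-dimensional space and therefore does not require any CFL-type restriction on $\tau$.
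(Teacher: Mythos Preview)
Your proposal is correct and follows essentially the same approach as the paper: both proofs rest entirely on the dissipativity inequality $\langle M_hv_h,v_h\rangle_{\mathbb V}\le 0$ from Proposition~\ref{properties of discrete dG}(ii), testing the resolvent equation against $\widetilde v$ for (i) and the Cayley identity $w-v=\tfrac{\tau}{2}M_h(v+w)$ against $v+w$ for (ii). The only cosmetic differences are that for (i) the paper uses the polarization identity $\langle\widetilde v-v,\widetilde v\rangle_{\mathbb V}=\tfrac12\bigl(\|\widetilde v\|_{\mathbb V}^2-\|v\|_{\mathbb V}^2+\|\widetilde v-v\|_{\mathbb V}^2\bigr)$ where you use Cauchy--Schwarz, and that you additionally spell out the invertibility of $I-\tfrac{\tau}{2}M_h$, which the paper takes for granted.
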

\begin{proof}
To prove the assertion (i), we define $\widetilde{v}=T_{h,\tau}v$ for any $v\in {\mathbb V}_h$, which means that $\widetilde{v}=v+\frac{\tau}{2}M_h\widetilde{v}$. Taking the inner product with $\widetilde{v}$ yields
$
\frac12\Big[\|\widetilde{v}\|_{\mathbb V}^2-\|v\|_{\mathbb V}^2+\|\widetilde{v}-v\|_{\mathbb V}^2 \Big]=\frac{\tau}{2}\langle M_h\widetilde{v}, \widetilde{v}\rangle_{\mathbb V} \leq 0.
$
Hence $\|\widetilde{v}\|_{\mathbb V}=\|T_{h,\tau}v\|_{\mathbb V}\leq \|v\|_{\mathbb V}$ leads to the assertion (i).

Similarly, to prove the assertion (ii), we define $v_h^{n}=S_{h,\tau}^nv$ for any $v\in {\mathbb V}_h$, which means that
$
v_h^{\ell}=v_h^{\ell-1}+\frac{\tau}{2}\big(M_hv_h^{\ell-1}+M_hv_h^{\ell} \big), ~ \ell=1,2,\ldots, n,
$
with $v_h^0=v$. Taking the inner product with $(v_h^{\ell-1}+v_h^{\ell})$ yields
$
\|v_h^{\ell}\|_{\mathbb V}^2-\|v_h^{\ell-1}\|_{\mathbb V}^2
 \leq 0,
$
and thus $\|v_h^{\ell}\|_{\mathbb V}\leq \|v_h^{\ell-1}\|_{\mathbb V}\leq \ldots \leq \|v_h^{0}\|_{\mathbb V}=\|v\|_{\mathbb V}$. This leads to the assertion (ii).
\end{proof}

\begin{proposition}
There exists a constant $C$ independent of $h$ and $\tau$ such that
\[\max_{0\leq n\leq N}{\mathbb E}\|u_h^n\|^2_{\mathbb V}\leq C(1+{\mathbb E}\|u_0\|^2_{\mathbb V}).\]
\end{proposition}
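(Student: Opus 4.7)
The plan is to work from the mild form \eqref{full discretization mild} by iterating once to obtain the representation
\[
u_h^n = S_{h,\tau}^n\,\pi_h u_0 \;-\; \sum_{j=1}^{n} S_{h,\tau}^{n-j} T_{h,\tau}\,\pi_h \Delta W^{j},
\]
and then estimating the two contributions separately in $L^2(\Omega;{\mathbb V})$. The main ingredients are Lemma \ref{lemma 5.1} (contractivity of $T_{h,\tau}$ and $S_{h,\tau}^{n}$ on ${\mathbb V}_h$), the fact that the orthogonal projection $\pi_h$ is a contraction on ${\mathbb V}$, and the independence of the increments $\Delta W^{j}$.

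For the deterministic part I would simply use $\|S_{h,\tau}^{n}\|_{{\mathcal L}({\mathbb V}_h,{\mathbb V}_h)}\le 1$ together with $\|\pi_h u_0\|_{\mathbb V}\le\|u_0\|_{\mathbb V}$ to get
\[
{\mathbb E}\|S_{h,\tau}^n\pi_h u_0\|_{\mathbb V}^2 \le {\mathbb E}\|u_0\|_{\mathbb V}^2.
\]
For the stochastic part I would exploit the fact that the increments $\{\Delta W^{j}\}_{j=1}^n$ are centered, independent, and ${\mathcal F}_{t_j}$-measurable, so cross terms vanish and an It\^o-isometry-type identity yields
\[
{\mathbb E}\Bigl\| \sum_{j=1}^{n} S_{h,\tau}^{n-j} T_{h,\tau}\pi_h \Delta W^{j}\Bigr\|_{\mathbb V}^2
= \sum_{j=1}^n {\mathbb E}\bigl\| S_{h,\tau}^{n-j} T_{h,\tau}\pi_h \Delta W^{j}\bigr\|_{\mathbb V}^2.
\]
Each summand is controlled by combining $\|S_{h,\tau}^{n-j}\|\le 1$, $\|T_{h,\tau}\|\le 1$, and $\|\pi_h\|\le 1$, reducing the estimate to ${\mathbb E}\|\Delta W^{j}\|_{\mathbb V}^{2}=\tau\,{\rm Tr}(Q)$. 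Summing over $j=1,\dots,n$ and using $n\tau\le T$ gives a bound by $T\,{\rm Tr}(Q)$.

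Combining both pieces with the elementary inequality $\|a+b\|^2\le 2\|a\|^2+2\|b\|^2$ yields
\[
\max_{0\le n\le N}{\mathbb E}\|u_h^n\|_{\mathbb V}^2 \;\le\; 2{\mathbb E}\|u_0\|_{\mathbb V}^2 + 2T\,{\rm Tr}(Q) \;\le\; C\bigl(1+{\mathbb E}\|u_0\|_{\mathbb V}^2\bigr),
\]
with $C$ depending only on $T$ and ${\rm Tr}(Q)$, and in particular independent of $h$ and $\tau$. There is no genuine obstacle here: the argument mirrors the continuous-time bound \eqref{eq 4.9 bound} for the spatial semidiscretization, with the unitary group $e^{tM_h}$ replaced by the contractive discrete propagators from Lemma \ref{lemma 5.1}; the only point requiring a little care is that one must invoke the mild form \eqref{full discretization mild} rather than testing \eqref{full discretization} directly, since the latter would produce an additional boundary-type term from $\langle M_h u_h^{n+1},u_h^{n+1}\rangle_{\mathbb V}$ (dissipative but not antisymmetric) that the mild representation already absorbs.
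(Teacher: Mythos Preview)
Your proposal is correct and follows essentially the same approach as the paper: iterate the mild form \eqref{full discretization mild} to obtain $u_h^n=S_{h,\tau}^n\pi_hu_0-\sum_{j=1}^{n}S^{n-j}_{h,\tau}T_{h,\tau}\pi_h\Delta W^j$, split via $\|a+b\|^2\le 2\|a\|^2+2\|b\|^2$, use independence of the increments to reduce the stochastic term to a sum, and then apply the contractivity of $S_{h,\tau}^n$, $T_{h,\tau}$, and $\pi_h$ together with ${\mathbb E}\|\Delta W^j\|_{\mathbb V}^2=\tau\,{\rm Tr}(Q)$ to arrive at the bound $2{\mathbb E}\|u_0\|_{\mathbb V}^2+2T\,{\rm Tr}(Q)$.
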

\begin{proof}
From \eqref{full discretization mild}, we know that
$
u_h^n=S_{h,\tau}^n\pi_hu_0-\sum_{j=1}^{n}S^{n-j}_{h,\tau}T_{h,\tau}\pi_h\Delta W^j.
$
Taking  $\|\cdot\|_{\mathbb V}$-norm on both sides of the above equation and using the triangle inequality, we get
\begin{align*}
{\mathbb E}\|u_h^n\|_{\mathbb V}^2&\leq 2{\mathbb E}\|S_{h,\tau}^n\pi_hu_0\|_{\mathbb V}^2+2{\mathbb E}\Big\|\sum_{j=1}^{n}S^{n-j}_{h,\tau}T_{h,\tau}\pi_h\Delta W^j\Big\|_{\mathbb V}^2\\
&\leq 2{\mathbb E}\|\pi_hu_0\|_{\mathbb V}^2+2\sum_{j=1}^{n}{\mathbb E}\left\|\pi_h\Delta W^j\right\|_{\mathbb V}^2\leq 2{\mathbb E}\|u_0\|_{\mathbb V}^2+2T{\rm Tr}(Q),
\end{align*}
which completes the proof.
\end{proof}

Let $W_{M;N,h}:=\sum\limits_{j=1}^{N}S_{h,\tau}^{N-j}T_{h,\tau}\pi_h\Delta W^{j}$. Then it is Gaussian on ${\mathbb V}$ with mean $0$ and covariance operator 
$$
Q_{T;N,h}:={\rm Cov}(W_{M;N,h})=\tau\sum_{j=1}^{N}\big(S_{h,\tau}^{N-j}T_{h,\tau}\pi_h\big)Q(S_{h,\tau}^{N-j}T_{h,\tau}\pi_h\big)^{*}.
$$

Applying the fully discrete method to the small noise system \eqref{sto_evo_para}, we denote by $\{{\mathcal L}\big( u_h^{N;u_0,\lambda} \big)\}_{\lambda>0}$ the laws of the fully discretizations.
The asymptotic behavior of $\{{\mathcal L}\big( u_h^{N;u_0,\lambda} \big)\}_{\lambda>0}$ is similar to that of $\{{\mathcal L}\big( u^{N;u_0,\lambda} \big)\}_{\lambda>0}$ in Proposition \ref{LDP_semi}, which is stated below.
\begin{proposition}\label{LDP_full}
For integer $N>0$ and $u_0\in{\mathbb V}$, the family of distributions $\{{\mathcal L}\big( u_h^{N;\;u_0,\lambda} \big)\}_{\lambda >0}$ satisfies the  large deviation principle with the good rate function
\begin{equation}
I_{T;N,h}^{u_0}(v)=\begin{cases}
\frac12\| \big(Q_{T;N,h}\big)^{-\frac12}\big(v-S_{h,\tau}^N\pi_h u_0\big) \|_{\mathbb V}^2, & v-S_{h,\tau}^N\pi_h u_0\in \big(Q_{T;N,h}\big)^{\frac12}({\mathbb V}),\\
+\infty,& {\rm otherwise}.
\end{cases}
\end{equation}
\end{proposition}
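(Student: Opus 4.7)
The plan is to mirror the argument used for Proposition \ref{LDP_exact} (and its temporal-semidiscrete analogue Proposition \ref{LDP_semi}), since the fully discrete solution differs from its deterministic part only by a Gaussian random variable whose covariance has already been identified in the paragraph immediately preceding the statement.

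First I would iterate the mild recursion \eqref{full discretization mild} for the scaled system \eqref{sto_evo_para}, yielding
\begin{equation*}
u_h^{N;u_0,\lambda} = S_{h,\tau}^{N}\pi_h u_0 - \sqrt{\lambda}\, W_{M;N,h},
\end{equation*}
where $W_{M;N,h} = \sum_{j=1}^{N} S_{h,\tau}^{N-j}T_{h,\tau}\pi_h \Delta W^{j}$. Independence and Gaussianity of the increments $\Delta W^{j}$ together with linearity of the propagators $S_{h,\tau}^{N-j}T_{h,\tau}\pi_h$ give at once that $W_{M;N,h}$ is ${\mathbb V}$-valued Gaussian with mean zero and covariance operator $Q_{T;N,h}$; this is exactly the observation recorded just before the statement of the proposition.

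Next I would center the problem by setting $Y^{\lambda} := u_h^{N;u_0,\lambda} - S_{h,\tau}^{N}\pi_h u_0 = -\sqrt{\lambda}\, W_{M;N,h}$. Since $-W_{M;N,h}$ has the same law as $W_{M;N,h}$, Lemma \ref{lemma 2.1} applied with $H={\mathbb V}$ and $\widetilde{Q}=Q_{T;N,h}$ shows that $\{Y^{\lambda}\}_{\lambda>0}$ obeys the large deviation principle on ${\mathbb V}$ with the good rate function $I^{0}_{T;N,h}(v) = \tfrac{1}{2}\|Q_{T;N,h}^{-1/2}v\|_{\mathbb V}^{2}$ on $Q_{T;N,h}^{1/2}({\mathbb V})$, and $+\infty$ elsewhere.

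Finally I would transfer the LDP from $\{Y^{\lambda}\}$ to $\{u_h^{N;u_0,\lambda}\}$ by the same deterministic translation argument used in the last paragraph of the proof of Proposition \ref{LDP_exact}: for any closed $A\in{\mathcal B}({\mathbb V})$, the set $A-\{S_{h,\tau}^{N}\pi_h u_0\}$ remains closed and the upper bound rewrites as $-\inf_{v\in A}I^{0}_{T;N,h}(v-S_{h,\tau}^{N}\pi_h u_0) = -\inf_{v\in A}I_{T;N,h}^{u_0}(v)$, with the corresponding argument for open sets yielding the lower bound. Goodness of $I_{T;N,h}^{u_0}$ is inherited from $I^{0}_{T;N,h}$, since translating compact sublevel sets by a fixed vector preserves compactness. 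I do not foresee a serious obstacle: every ingredient (the mild representation, the explicit Gaussian covariance, the abstract Gaussian LDP of Lemma \ref{lemma 2.1}, and the translation invariance of the LDP) is already in place, and the proof is essentially a verbatim adaptation of the two earlier LDP results, with $(S_{\tau},T_{\tau},Q_{T;N})$ replaced by $(S_{h,\tau},T_{h,\tau}\pi_h,Q_{T;N,h})$.
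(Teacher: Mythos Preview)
Your proposal is correct and follows exactly the approach the paper intends: the paper does not give a separate proof here but simply notes that the result is obtained analogously to Proposition~\ref{LDP_semi} (and hence to Proposition~\ref{LDP_exact}), which is precisely the Gaussian LDP of Lemma~\ref{lemma 2.1} plus the deterministic translation argument you outline.
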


\subsection{Error estimate of full discretization}
The error $u_h^n-u(t_n)$ is divided as
$
u_h^n-u(t_n)=\left(u_h^n-u^n\right)+\left( u^n-u(t_n)\right),
$
where the second term in the right-hand side is the error in temporal direction, which has been studied in Proposition \ref{error of midpoint}.
Hence we only need to consider the error $u_h^n-u^n$. By inserting the term $\pi_h u^n$, we get 
$
u_h^n-u^n=\left(u_h^n-\pi_h u^n\right)+\left(\pi_h u^n-u^n\right)=:e_h^{n}+e_{\pi}^{n}.
$

Note that  \eqref{projection order} and Propositions \ref{H1 of un}-\ref{H2 of un} yield that, for $k\in\{1,2\}$, 
\[
\left({\mathbb E}\|e_{\pi}^n\|_{\mathbb V}^2\right)^{\frac12}\leq  Ch^k \left( {\mathbb E}\|u^n\|_{H^k(D)^6}^2\right)^{\frac12}\leq Ch^k \left(1+ {\mathbb E}\|u_0\|_{{\mathcal D}(M_0^k)}^2\right)^{\frac12}.
\]
The estimate of error $e_h^n$ is stated in the following theorem.
\begin{theorem}\label{thm 5.1}
Let $\{u^n,~0\leq n\leq N\}$ in $L^2(\Omega;  H^k(D))$ with $k\in\{1,2\}$ be the solution of \eqref{midpoint} and let $\{u_h^n,~0\leq n\leq N\}$ in $L^2(\Omega;{\mathbb V}_h)$ be the solution of \eqref{full discretization}. Then there is a constant $C$ independent of $h$ and $\tau$ such that

\begin{equation}\label{eq 5.2}
\max_{0\leq n\leq N}\Big({\mathbb E}\|e_h^n\|^2_{\mathbb V}\Big)^{\frac12}\leq Ch^{k-\frac12} \quad \mbox{for } k\in\{1,2\}.
\end{equation}
\end{theorem}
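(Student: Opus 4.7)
The plan is to adapt the spatial-semidiscrete energy argument of Theorem~\ref{thm: error of eh} to the fully discrete setting, exploiting the fact that the midpoint rule combined with the dG spatial discretization produces a pathwise (noise-free) recursion for $e_h^n$. First I would apply $\pi_h$ to \eqref{midpoint}. By Propositions~\ref{H1 of un}--\ref{H2 of un} one has $u^n\in{\mathcal D}(M_0)\cap H^1(D)^6\subset{\mathcal D}(M)\cap H^1({\mathcal T}_h)^6$, so Proposition~\ref{properties of discrete dG}(i) gives $\pi_h M u^n=M_h u^n$, and hence
\begin{equation*}
\pi_h u^{n+1}=\pi_h u^n+\tfrac{\tau}{2}\big(M_h u^n+M_h u^{n+1}\big)-\pi_h\Delta W^{n+1}.
\end{equation*}
Subtracting this identity from \eqref{full discretization} makes the noise increments cancel exactly and leaves the deterministic (pathwise) recursion
\begin{equation*}
e_h^{n+1}-e_h^n=\tfrac{\tau}{2}M_h\big(e_h^n+e_h^{n+1}\big)+\tfrac{\tau}{2}M_h\big(e_\pi^n+e_\pi^{n+1}\big),
\end{equation*}
with $e_h^0=\pi_h u_0-\pi_h u_0=0$.

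Testing this identity in ${\mathbb V}$ with the midpoint test function $e_h^n+e_h^{n+1}$ yields the energy relation
\begin{equation*}
\|e_h^{n+1}\|_{\mathbb V}^2-\|e_h^n\|_{\mathbb V}^2=\tfrac{\tau}{2}\big\langle M_h(e_h^n+e_h^{n+1}),e_h^n+e_h^{n+1}\big\rangle_{\mathbb V}+\tfrac{\tau}{2}\big\langle M_h(e_\pi^n+e_\pi^{n+1}),e_h^n+e_h^{n+1}\big\rangle_{\mathbb V}.
\end{equation*}
By the dissipativity of $M_h$ (Proposition~\ref{properties of discrete dG}(ii)) the first term is non-positive and, up to the factor $\tau/2$, equals minus the sum of squared jump and boundary norms of $e_h^n+e_h^{n+1}$; this acts as the absorbing quantity. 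For the cross term I would repeat the face-by-face estimate used in the proof of Theorem~\ref{thm: error of eh}: Proposition~\ref{properties of discrete dG}(iii) writes it as a sum of volume and face contributions, the volume contributions vanish by the ${\mathbb V}$-orthogonality of $\pi_h$ (since $\nabla\times(e_h^n+e_h^{n+1})|_K\in{\mathbb V}_h$), and Cauchy--Schwarz combined with Young's inequality split the face part into (a) half of the jump/boundary energy of $e_h^n+e_h^{n+1}$, to be absorbed into the first term of the identity, and (b) a residue of face norms of $e_\pi^n+e_\pi^{n+1}$, which \eqref{projection order boundary} bounds by $Ch^{2k-1}\big(|u^n|_{H^k(D)^6}^2+|u^{n+1}|_{H^k(D)^6}^2\big)$.

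Combining the two bounds gives $\|e_h^{n+1}\|_{\mathbb V}^2-\|e_h^n\|_{\mathbb V}^2\leq C\tau h^{2k-1}\big(|u^n|_{H^k(D)^6}^2+|u^{n+1}|_{H^k(D)^6}^2\big)$. Telescoping from $n=0$ to $n=N-1$, using $e_h^0=0$, taking expectation, and invoking Propositions~\ref{H1 of un}--\ref{H2 of un} for the uniform-in-$n$ bound on ${\mathbb E}\|u^n\|^2_{H^k(D)^6}$ yields $\max_{0\leq n\leq N}{\mathbb E}\|e_h^n\|_{\mathbb V}^2\leq C h^{2k-1}$, which is \eqref{eq 5.2}. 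The main obstacle is the cross-term bound: one has to transcribe the single-time estimate \eqref{eq 4.15} to the midpoint pair $(e_h^n+e_h^{n+1},e_\pi^n+e_\pi^{n+1})$ and check that the Young absorption constants depend only on the geometric factors $\gamma_F,\delta_F,\alpha_K,\beta_K$ and not on $\tau$ or $h$, so that the absorption into the dissipation term survives and the telescoping remains clean; everything else is a direct transcription of the spatial-semidiscrete proof.
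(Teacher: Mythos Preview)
Your proposal is correct and follows essentially the same route as the paper: project \eqref{midpoint} via $\pi_h$ and Proposition~\ref{properties of discrete dG}(i), subtract to obtain the noise-free recursion for $e_h^n$, test with $e_h^n+e_h^{n+1}$, reuse the cross-term estimate \eqref{eq 4.15} on the midpoint pair, absorb half of the jump energy into the dissipative term, and telescope using the uniform $H^k$ bounds of Propositions~\ref{H1 of un}--\ref{H2 of un}. The paper closes with ``Gronwall's inequality'' where you write ``telescoping,'' but since the right-hand side of the increment bound does not involve $e_h^n$, the two are the same here.
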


\begin{proof}
We apply the projection $\pi_h$ to the temporal semidiscretization 
\eqref{midpoint} and use Proposition \ref{properties of discrete dG} (i) to get
\begin{equation} \label{projection un}
\pi_h u^{n+1}=\pi_h u^n +\frac{\tau}{2} \left( M_h u^n+M_h u^{n+1} \right)-\pi_h\Delta W^{n+1}.
\end{equation}
Subtracting \eqref{full discretization} from \eqref{projection un} yields,
\begin{align}
e_n^{n+1}=e_h^n+\frac{\tau}{2}\left(M_he_h^n +M_h e_h^{n+1}\right)+\frac{\tau}{2}\left(M_he_{\pi}^n+ M_h e_{\pi}^{n+1}\right).
\end{align}
Applying $\langle\cdot, e_h^n+e_h^{n+1} \rangle_{\mathbb V}$, we obtain
\begin{align}\label{error}
\|e_h^{n+1}\|_{\mathbb V}^2-\|e_h^{n}\|_{\mathbb V}^2
=\frac{\tau}{2}\langle M_h(e_h^n + e_h^{n+1}), e_h^n+e_h^{n+1} \rangle_{\mathbb V}+\frac{\tau}{2}\langle M_h(e_{\pi}^n+ e_{\pi}^{n+1}), e_h^n+e_h^{n+1} \rangle_{\mathbb V}.
\end{align} 
For the second term on the right-hand side of \eqref{error}, we use \eqref{eq 4.15} to get
\begin{align*}
\langle M_h(e_{\pi}^n+ e_{\pi}^{n+1}), e_h^n+e_h^{n+1} \rangle_{\mathbb V}\leq& -\frac12\langle M_h(e_h^n + e_h^{n+1}), e_h^n+e_h^{n+1} \rangle_{\mathbb V}\\
&+Ch^{2k-1}\|u^{n}+u^{n+1}\|^2_{H^k(D)^6}.
\end{align*}
Hence \eqref{error} becomes
\begin{align*}
\|e_h^{n+1}\|_{\mathbb V}^2-\|e_h^{n}\|_{\mathbb V}^2
\leq \frac{\tau}{4}\langle M_h\left(e_h^n + e_h^{n+1}\right), e_h^n+e_h^{n+1} \rangle_{\mathbb V}+C\tau h^{2k-1}\|u^{n}+u^{n+1}\|^2_{H^k(D)^6}.
\end{align*}
 Proposition \ref{properties of discrete dG} (ii) leads to 
$
\langle M_he_h^n +M_h e_h^{n+1}, e_h^n+e_h^{n+1} \rangle_{\mathbb V}\leq 0,
$
and then
\[
{\mathbb E}\|e_h^{n+1}\|_{\mathbb V}^2-{\mathbb E}\|e_h^{n}\|_{\mathbb V}^2
\leq C\tau h^{2k-1}{\mathbb E}\left(\|u^{n}\|^2_{H^k(D)^6}+\|u^{n+1}\|^2_{H^k(D)^6}\right)\leq C\tau h^{2k-1}.
\]
Gronwall's inequality yields the conclusion. 
\end{proof}

Combining the error estimates in temporal and spatial directions, we finally obtain the error estimate for the full discretization \eqref{full discretization}.
\begin{theorem}
If the assumptions of Theorem \ref{thm 5.1} and Theorem \ref{error of midpoint} are satisfied, then the fully discrete error $u_h^n-u(t_n)$ is bounded by
\[
\left({\mathbb E}\|u_h^n-u(t_n)\|_{\mathbb V}^2\right)^{\frac12}\leq C\tau^{\frac{k}{2}}+Ch^{k-\frac12},\quad \mbox{for }k\in\{1,2\},
\]
where the constant $C$ is independent of $h$ and $\tau$.
\end{theorem}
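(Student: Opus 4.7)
The plan is to reduce the fully discrete error to the two error sources that have already been analyzed: the temporal error of the midpoint semidiscretization (Theorem \ref{error of midpoint}) and the spatial error $e_h^n$ between the time-semidiscrete solution and the fully discrete solution (Theorem \ref{thm 5.1}). Concretely, I will insert the intermediate term $u^n$ and write
\[
u_h^n - u(t_n) = \bigl(u_h^n - u^n\bigr) + \bigl(u^n - u(t_n)\bigr),
\]
and then further split the first summand via the projection $\pi_h u^n$ as $u_h^n-u^n = e_h^n + e_\pi^n$, exactly as set up just before Theorem \ref{thm 5.1}.

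Next I would apply the triangle inequality in $L^2(\Omega;{\mathbb V})$ to obtain
\[
\bigl({\mathbb E}\|u_h^n-u(t_n)\|_{\mathbb V}^2\bigr)^{1/2}
\le \bigl({\mathbb E}\|e_h^n\|_{\mathbb V}^2\bigr)^{1/2}
+\bigl({\mathbb E}\|e_\pi^n\|_{\mathbb V}^2\bigr)^{1/2}
+\bigl({\mathbb E}\|u^n-u(t_n)\|_{\mathbb V}^2\bigr)^{1/2}.
\]
Now I would invoke the three ingredients in turn. Theorem \ref{thm 5.1} bounds the first term by $Ch^{k-1/2}$; the projection bound \eqref{projection order} together with the $H^k$-regularity of the temporal semidiscretization (Propositions \ref{H1 of un} and \ref{H2 of un}) controls the second term by $Ch^{k}$; and Theorem \ref{error of midpoint} controls the third term by $C\tau^{k/2}$. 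Absorbing the $h^{k}$ contribution into $h^{k-1/2}$ (valid for $h\le 1$, or else into the generic constant) yields the claimed bound $C\tau^{k/2}+Ch^{k-1/2}$.

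The argument is essentially an application of the triangle inequality, so there is no real technical obstacle here; the only point that requires a brief check is that the assumptions of the two cited theorems are simultaneously met, i.e.\ that the hypotheses $Q^{1/2}\in HS({\mathbb V},{\mathcal D}(M_0^k))$ and $u_0\in L^2(\Omega;{\mathcal D}(M_0^k))$ imposed in the temporal analysis are strong enough to deliver the $L^2(\Omega;C([0,T];H^k(D)^6))$-regularity of $u$ that is needed in the spatial part. This compatibility is exactly what Propositions \ref{exact_H1}--\ref{exact_H2} and the embeddings ${\mathcal D}(M_0^k)\hookrightarrow H^k(D)^6$ guarantee, so I would state this verification in one sentence and then conclude.
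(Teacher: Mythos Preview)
Your proposal is correct and follows exactly the same route as the paper: the paper sets up the decomposition $u_h^n-u(t_n)=(u_h^n-\pi_h u^n)+(\pi_h u^n-u^n)+(u^n-u(t_n))=e_h^n+e_\pi^n+(u^n-u(t_n))$ in the paragraphs preceding Theorem~\ref{thm 5.1}, bounds $e_\pi^n$ by $Ch^k$ via \eqref{projection order} and Propositions~\ref{H1 of un}--\ref{H2 of un}, and then simply combines Theorem~\ref{thm 5.1} and Theorem~\ref{error of midpoint} without further argument. Your additional remark about the compatibility of the hypotheses is a useful clarification but is not spelled out in the paper.
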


\bibliographystyle{plain}
\bibliography{maxwell}
\end{document}